\documentclass[11pt]{amsart} 

\usepackage{amsmath,amsthm}
\usepackage{amssymb}
\usepackage{amsfonts}
\usepackage{booktabs}
\usepackage{bbm}

\usepackage{enumitem}

\usepackage{graphicx}

\usepackage{xcolor,framed,tikz,float,hyperref,url}

\definecolor{shadecolor}{gray}{0.9}
\usetikzlibrary{arrows.meta,calc}

\usepackage{comment}

\newtheorem{theorem}{Theorem}[section]
\newtheorem{corollary}[theorem]{Corollary}
\newtheorem{lemma}[theorem]{Lemma}
\newtheorem{proposition}[theorem]{Proposition}

\theoremstyle{definition}
\newtheorem{definition}[theorem]{Definition}
\newtheorem{remark}[theorem]{Remark}

\newcommand{\B}{\mathcal{B}}
\newcommand{\R}{\mathbb{R}}

\newcommand{\U}{\mathcal{U}}
\newcommand{\Q}{\mathbb{Q}}
\newcommand{\AD}{\mathcal{H}}

\newcommand{\cE}{\mathcal{E}}

\newcommand{\Span}{\operatorname{span}_{\Q}}

\newcommand{\CQ}{\mathcal{C}_{\Q}}
\newcommand{\CbiQ}{\mathcal{C}_{\mathrm{bi}\Q}}

\begin{document}

\title[Cauchy-Hamel Continuity]{On the Cauchy-Hamel Continuity of Real Functions}

\author{Gabriel Istrate}
\address{
Faculty of Mathematics and Computer Science\\ 
University of Bucharest\\
Bucharest, Romania}
\email{gabriel.istrate@unibuc.ro} 

\date{}

\subjclass[2020]{Primary 26A03; Secondary 39B22, 54C08,70A05.}
\keywords{additive functions, $\Q$-continuity, parallelogram rule.}

\begin{abstract}

The classical Cauchy functional equation provides the simplest setting in
which algebraic structure and regularity come sharply apart: without a
regularity assumption, additive functions on $\mathbb{R}$ may be highly
discontinuous.  This raises a complementary question to the usual problem
of automatic continuity: rather than imposing weak regularity assumptions
that force an additive function to be linear, can ordinary continuity itself
be weakened in a natural way so that arbitrary additive functions become
continuous?

We investigate this question by comparing several extensions of ordinary
continuity.  Particular attention is given to $\mathbb{Q}$-continuity, a
nonstandard notion defined through continuous interpolation on rational
affine segments, and to a stronger bilateral version.  We
obtain structural characterizations of these notions in terms of rational affine lines; in
particular, bilateral $\mathbb{Q}$-continuity is equivalent to continuous
extendability of the restriction of the function from every affine line
$x+\mathbb{Q}h$.  We also compare these notions with two natural topological
constructions, the initial topology $\tau_{\AD}$ making all additive functions
continuous and a topology $\tau_{\Q}$ generated by rational directions.

The resulting classes display strong separation phenomena.  Assuming the
Axiom of Choice, neither $\mathbb{Q}$-continuity nor bilateral
$\mathbb{Q}$-continuity is the class of continuous functions associated with
any topology on $\mathbb{R}$.  Conversely, every $\tau_{\Q}$-open set is the
inverse image of an ordinary open set under a bilaterally
$\mathbb{Q}$-continuous function.  We further study Baire-category and
descriptive-set-theoretic properties of the associated topologies and show
that generalized polynomials provide a natural class of bilaterally
$\mathbb{Q}$-continuous functions.

The problem also has a classical motivation from the axiomatic foundations
of mechanics, where discontinuous additive functions arise when continuity
is removed from axiomatizations of the parallelogram rule for the composition
of forces.  Our results suggest bilateral $\mathbb{Q}$-continuity as a
candidate for a notion of \emph{Cauchy-Hamel continuity} and as a possible
starting point for a weak calculus compatible with arbitrary additive
functions.
\end{abstract}

\maketitle

\tableofcontents 

\section{Introduction}

Ordinary calculus is built upon the idea of approximating the slope of a function near a given point by a linear function - that is, by a \emph{continuous} additive function. 

However, linear functions are quite special within the set of \emph{additive functions} \cite{12} (sometimes called \emph{Cauchy-Hamel functions}\footnote{The terminology has changed: initially discontinuous additive functions were called \emph{Hamel functions} (e.g. in \cite{12}) but now this name is  reserved for a quite different notion, e.g. \cite{14}. Calling them \emph{Cauchy-Hamel functions} is a reasonable compromise.}): under the hypothesis that a \emph{Hamel basis} of $\R$ over $\Q$ exists\footnote{A well-ordering of \(\R\) yields a Hamel basis, whereas in ZF the existence of a Hamel basis does not imply that \(\R\) can be well-ordered; see \cite{3}.} all additive functions other than the linear ones are  discontinuous everywhere and take a dense set of values in every interval. 

An intriguing question that has already motivated (explicitly or implicitly) some research (e.g. \cite{4},\cite{11})\footnote{We asked precisely this question in our unpublished M.Sc. Thesis from 1994 \cite{9}.} is whether an "exotic" theory of differentiability exists that is based on locally approximating a function near a point by  \emph{(bijective) additive functions} (rather than linear ones). We also require that  this "exotic" theory be well-behaved enough to retain some of the flavor of classical calculus\footnote{There is a deeper foundational angle to this question: that of developing \emph{nonstandard models of classical mechanics}, based on replacing the parallelogram rule with a "wilder" version. To keep things tractable, we defer a complete treatment of this connection to subsequent papers, and only give a couple of details in the Conclusions section.}. To build such a theory \textbf{a first requirement is to construct a correspondingly "exotic" notion of continuity that makes all bijective additive functions continuous:} indeed, to express ordinary differentiability of a function $f$ at the point $x_0$ one needs to have: 
\begin{equation} 
f(x)=f(x_0)+(x-x_0)\cdot g(x)
\end{equation} 
where $g$ is a function that is continuous at $x_0$. The continuity of a differentiable $f$ at $x_0$ follows from this equation and the following facts: 
\begin{itemize} 
\item Function $H(x)=x$ is continuous at $x_0$. 
\item Continuity at $x_0$ is preserved by addition/subtraction of constants. 
\item The product of two continuous functions at $x_0$ (in our case $h(x)=x-x_0$ and $g$) is continuous at $x_0$. 

\end{itemize} 
If we want to extend this to differentiability  with respect to an arbitrary bijective additive function $H$ so that Cauchy-Hamel differentiability with respect to $H$ at $x_0$ implies "weak continuity" at that point, 
 one needs to write 
\begin{equation} 
f(x)=f(x_0)+[H(x)-H(x_0)]\cdot g(x)
\end{equation} 
and require the following properties: 
\begin{enumerate} 
\item \textbf{$H$ is "weakly continuous" at $x_0$.} 
\item "Weak continuity at $x_0$" is preserved by addition/subtraction of constants.
\item The product of a function that is "weakly continuous" at $x_0$ (in our case $H(\cdot)-H(x_0)$) and an ordinarily continuous function (in our case $g$) is "weakly continuous" at $x_0$. 
 
\end{enumerate} 

Several continuity notions considered in \cite{11} satisfy, indeed, the basic requirement that all additive functions are continuous; they are found, in the terminology of that paper, among the so-called \emph{type B properties}, meaning that there exist (ordinarily) discontinuous additive functions that are continuous in the sense of the given property.  Specifically, of the six type B properties considered in \cite{11}, four satisfy our requirement. On the other hand, in \cite{9} we had independently proposed yet another such notion we called \emph{$\Q$-continuity}\footnote{Here $\Q$ refers to the set of rational numbers, not to quasicontinuity.}.  
The results in \cite{9} were finally published in \cite{10} and were mildly encouraging: we showed, for instance, that $\Q$-continuous functions are uniform limits of Darboux functions. This is, of course, somewhat reminiscent of the fact that ordinary continuous functions have the Darboux property, a kind of regularity of the ordinary theory that we want to preserve in the Cauchy-Hamel world. 

This paper aims to draw attention to the problem of studying notions of continuity
and differentiability based on additive functions, and to address it from a principled, foundational perspective:  rather than simply proceeding with the study of  notions proposed in \cite{9,10}, we start with multiple plausible notions of continuity for additive functions, and compare them in order to assess which of these definitions is more well-behaved, and deserves the name of \emph{Cauchy-Hamel continuity}.

The eight definitions we consider are 
\begin{itemize}
\item[(i).] the four notions from \cite{11} that make every additive function continuous: they are \emph{graph continuity} \cite{20}, \emph{symmetry} \cite{19}, \emph{local almost continuity} \cite{21}, and \emph{local almost quasicontinuity} \cite{21}. 
\item[(ii).] two variations on our original notion of $\Q$-continuity from \cite{10}. 
\item[(iii).] two continuity notions based on new topologies that refine $\tau_{std}$, the standard topology
on $\R$:

\begin{itemize} 
\item $\tau_{\Q}$ is a topology on $\R$ inspired by our notion of $\Q$-continuity (which is not
based on a topology).

\item $\tau_{\AD}$ is the initial topology on $\R$ that makes all additive functions continuous.

\end{itemize} 
\end{itemize} 

The purpose of this paper is to compare these notions. 
Even though our ultimate goal is to define and study differentiability with respect to an arbitrary additive function, we will not deal in this paper  with such notions, deferring their study to a
subsequent paper.

\subsection{Regularity Tests, and Overview of our Results.}
 What are the regularity 
features of ordinary continuity that we aim to emulate in the Cauchy-Hamel world? There
are many possible answers to this question. We came up with the following (reasonable but, of course, nonexhaustive) list: 

\begin{enumerate} 
\item Ordinary continuity should be a special case of the new notion of continuity. In particular, if the new notion of continuity is based on a topology $\tau$, then $\tau$ should refine $\tau_{std}$. 

\item The new notion of continuity should make all additive functions continuous.

\item The new notion of continuity should coincide with ordinary continuity on the class of Baire 1 functions.

\item All continuous functions (with respect to the new notion) should
have (some weak version of) the Darboux property.

\item All other things being equal, we favor notions of continuity defined
using topologies. However, since many of the candidate continuity notions are \emph{not} topological, this is only a desirable feature, rather than a requirement. Failing this condition will not, in itself,  disqualify a notion of continuity. 

\item Topological notions should not recognize as open any "small" nonempty open sets (where "small" is interpreted in the ordinary sense of Lebesgue measure and Baire category).

\item If the new notion of continuity is based on a topology $\tau$, then $\tau$ should admit a nontrivial, well-behaved, descriptive set theory. In particular it should satisfy Baire’s theorem, and should not make all sets Borel.
\item Finally, \emph{if the continuity notion is not already disqualified by previous tests} then there should be natural classes of examples from the literature, other than the additive functions, that are continuous. 
\end{enumerate} 

During the research for this paper we had considered another test:
\begin{enumerate}[start=9] 
\item For ordinary continuous functions the sum of a continuous and a Darboux
function is the uniform limit of a sequence of Darboux functions. A similar statement should be true for "weak" continuity. 
\end{enumerate} 

It turned out that this last proposed test was inadequate for a trivial reason: it already fails for additive functions. We
further discuss this issue in Section~\ref{sec:8}.

The results of our tests are summarized in Table~\ref{table:1}. Here are some further
comments:

\begin{table}[h]
\begin{tabular}{|c|c|c|c|c|c|}
\toprule
 & $\stackrel{\mbox{Bilateral}}{\mathbb{Q}\mbox{-continuity}}$  & $\mathbb{Q}$-continuity & $\tau_{\Q}$-continuity & $\tau_H$-continuity & $\stackrel{\mbox{local almost}}{\mbox{continuity}}$ \\
\midrule
Test 1 & YES & YES & YES & YES & YES \\
\hline
Test 2 & YES & YES & YES & YES & YES \\
\hline
Test 3 & YES & YES & YES & YES & YES \\
\hline
Test 4 & YES & YES & \textcolor{red}{NO} & \textcolor{red}{NO} & \textcolor{red}{NO}\\
\hline
Test 5 & NO & NO & YES & YES & NO\\
\hline 
Test 6 & N/A & N/A & YES & YES & N/A\\
\hline 
Test 7 & N/A & N/A & \textcolor{red}{NO} & \textcolor{red}{NO} & N/A\\
\bottomrule
\end{tabular}
\vspace{0.2cm}
\caption{An overview of test results (we refer to Section~\ref{sec:3} for the formal definitions of these classes). Each such result may be subject to additional set-theoretic axioms, fully specified in the text.}
\label{table:1} 
\end{table}

\begin{itemize} 
\item All eight continuity notions we considered satisfy the basic requirements of extending ordinary continuity (Test \#1),
and making all additive functions continuous (Test \#2).

\item Three of the four concepts from \cite{11} fail Test \#3 and, consequently, will be removed from further consideration. For the same reason, these concepts are not listed in Table~\ref{table:1}, nor will they be discussed anymore. A fourth property from \cite{11}, local almost continuity,  and the remaining four properties pass Test \#3: none of these definitions creates more continuous
functions among the (ordinary) Baire 1 functions.

\item Only the two $\Q$-continuity classes pass Test \#4: every function in either class is a uniform limit of a sequence of
Darboux functions. $\tau_{\AD}$-continuity, $\tau_{\Q}$-continuity and local almost continuity do not have this property. For these latter three notions the counterexample we give seems to preclude any natural variation on the
Darboux property, hence excluding them from further consideration. 

\item We study some properties of the two (already discarded) topological properties. Both $\tau_{\AD}$-continuity and $\tau_{\Q}$-continuity pass Test \#6: no nonempty meager set and no nonempty set of
Lebesgue measure zero can be open in these topologies. On the other hand, under additional set-theoretic hypotheses $\tau_{\AD}$-continuity and $\tau_{\Q}$-continuity fail Test \#7: assuming that a Hamel basis exists
the reals are of first Baire category in each of the two topologies. On the other hand, assuming CH and AC, the two topologies are examples of \emph{$Q$-spaces}: all 
sets of reals
are $F_{\sigma}$ ($G_{\delta}$).

\item The remaining two
notions, $\Q$-continuity and its bilateral version, are seemingly not based on a topological concept. We show, in fact, that (assuming the Axiom of Choice) \emph{no topological
notion will do}: they are intrinsically not based on a topology.

\end{itemize} 

We also give other results that further characterize (most of) the five continuity notions in Table~1: 
\begin{itemize} 
\item  We show in Section~\ref{sec:4} that three of the five notions we consider can be
characterized by gluing subclasses of continuous functions along "rational
lines".
\item We compare in Section~\ref{sec:5} the various notions of continuity. The relations are depicted in Figure~\ref{fig-1}. Briefly, bilateral $\Q$-continuity is the most demanding notion, followed by $\Q$-continuity,  then $\tau_{\Q}$-continuity and finally local almost continuity. On the other hand $\tau_{H}$-continuity also implies $\tau_{\Q}$-continuity, but is incomparable with $\Q$-continuity, bilateral or not. 
\end{itemize} 

\begin{figure} 
\includegraphics[width=8cm]{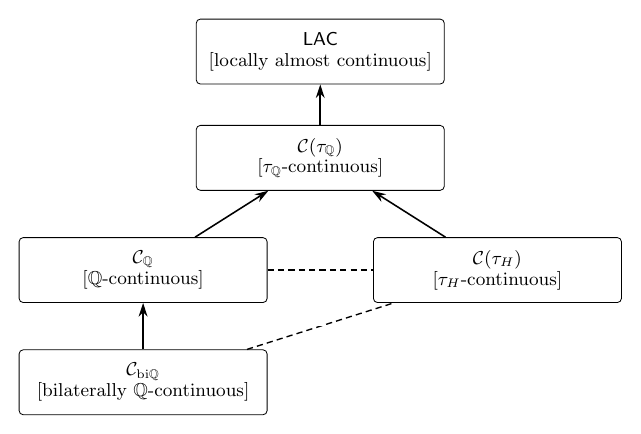}
\caption{Inclusions between classes in Table~1 (we refer to Section~\ref{sec:3} for the formal definitions of these classes). Arrows represent strict inclusions (assuming AC). Dashed lines represent incomparable classes.}
\label{fig-1}
\end{figure} 

In view of these results we feel that the two versions of $\Q$-continuity are the most
viable candidates for a notion of Cauchy-Hamel continuity. We further discuss this issue
in Section~\ref{sec:8}.

\subsection{A Note on Set-Theoretic Assumptions.} 

Several of our results require, in addition to the standard Zermelo-Fraenkel axioms, additional axioms such as Choice, the existence of a Hamel basis or (less frequently) the Continuum Hypothesis. 

Our choice is to work in ZF and prefix all the results we prove by the additional axioms that suffice in order  to prove the given result. 

\subsection{Paper Outline.} The outline of this paper is as follows: we first review some notions and results we will need in the sequel, and formally define the continuity notions that will be the candidates for the title of Cauchy-Hamel continuity. In Section~\ref{sec:fail} we show that three of the notions from \cite{11} already fail Test \#3 (this is why they are not listed in Table~\ref{table:1}). In Section~\ref{sec:4} we characterize three of the remaining notions by gluing specific kinds of functions across rational lines. These characterizations are then used to separate the surviving continuity notions. 

\section{Preliminaries} 

We will assume familiarity with general topology at the level of, say, \cite{26}, real analysis (see \cite{5}), and with basic set theory \cite{7}. For completeness we review below some of the notions we will need in the sequel. 

Whenever we will refer to the closure $\overline{A}$ of a set $A\subseteq \R$, the closure will be understood to be according to the standard topology on $
\R$, $\tau_{std}$ (unless mentioned otherwise). A similar convention will apply to the interior of set $A$, denoted $Int(A)$. We will use notation $\mathfrak{c}$ for $|\R|$. 

\begin{definition} Given a topological space $(X,\tau)$, 
a set $D\subseteq X$ is called \emph{discrete} if for every $x\in D$ there exists an open set $U\in \tau$ such that 
$D\cap U = \{x\}$. 

$(X,\tau)$ is called \emph{$\sigma$-discrete} if there is a sequence of subsets $D_1,D_2, \ldots, D_n, \ldots $ of $X$ such that 
\begin{enumerate} 
\item $X=\cup_{i\geq 1} D_i$. 
\item For every $i\geq 1$ the set $D_i$ is discrete. 
\end{enumerate} 
\end{definition} 

\begin{definition}
A set $A\subseteq\mathbb R$ is called \emph{preopen} if $A\subseteq\operatorname{Int}(\overline{A})$.
\end{definition}

Given a topology $\tau$ on $\R$, we will denote by $C(\tau)$ the set of continuous functions from $(\R,\tau)$ to $(\R,\tau_{std})$. We will also write $C(\R)$ instead of $C(\tau_{std})$.

\begin{definition}[Additive function]
A function $a:\R\to\R$ is called \emph{additive} if it is a solution of the Cauchy equation: 
\[
a(x+y)=a(x)+a(y)\qquad (x,y\in\R).
\]
Equivalently, $a$ is $\Q$-linear with respect to the $\Q$-vector space structure of $\R$. Let $\AD$ denote the class of additive functions $\R\to\R$. 
\end{definition}

\begin{definition}
Let $k\geq 1$. A function $
F\colon \mathbb{R}^k\to\mathbb{R}$ 
is called \emph{$k$-additive} if it is additive in each variable separately.
That is, for every $i\in\{1,\ldots,k\}$ and all
$x_1,\ldots,x_k,u,v\in\mathbb{R}$,
\[
\begin{aligned}
&F(x_1,\ldots,x_{i-1},u+v,x_{i+1},\ldots,x_k)\\
&\qquad =
F(x_1,\ldots,x_{i-1},u,x_{i+1},\ldots,x_k)
+
F(x_1,\ldots,x_{i-1},v,x_{i+1},\ldots,x_k).
\end{aligned}
\]

A $k$-additive function $F$ is called \emph{symmetric} if
$
F(x_{\sigma(1)},\ldots,x_{\sigma(k)})
=
F(x_1,\ldots,x_k)$ 
for every permutation $\sigma$ of $\{1,\ldots,k\}$.
\end{definition}
\begin{definition} 
Given a Hamel basis $\B$ and a real number $x$, define, for $b\in \B$, function $\pi_{\B,b}: \R\rightarrow \Q$ as follows: $\pi_{\B,i}(x)$ is the unique rational number such that 
\begin{equation} 
x= \sum_{j\in \B}\pi_{\B,j}(x)\cdot j 
\end{equation} 
is the linear expression of $x$ in basis $\B$. Also define 
\begin{equation} 
Supp_{\B}(x)=\{b\in \B: \pi_{\B,b}(x)\neq 0\}, 
\end{equation} 
$s_{B}(x)=|Supp_{B}(x)|$, and 
\begin{equation} 
S_{n,\B}=\{x\in \R: s_{B}(x)\leq n\}. 
\label{eq:sn}
\end{equation} 
Clearly, we have:
\begin{equation} 
\mathbb{R}=\bigcup_{n=1}^{\infty}S_{n,\B}. 
\end{equation} 
\end{definition}

\begin{definition}
Given $x\neq y$, define
\begin{equation} 
L(x,y)=\{q\cdot x + (1-q)\cdot y: q\in \Q\}
\end{equation} 
and 
\begin{equation} 
S(x,y)=\{q\cdot x + (1-q)\cdot y: q\in \Q\cap [0,1]\}
\end{equation} 

\end{definition}

\begin{definition}[(Strongly)  Darboux functions]

A function $f:\R\rightarrow \R$ is \emph{a Darboux function} iff for every $x<y$ such that $f(x)\neq f(y)$ and every $\xi$ strictly between $f(x)$ and $f(y)$, there exists $\eta \in (x,y)$ with $f(\eta)=\xi$.  Let $\mathcal{D}$
denote the class of Darboux functions. 

A function $d:\R\to\R$ is \emph{strongly Darboux} if for every nondegenerate interval
$I\subset\R$ one has $d(I)=\R$.
We denote the class of such functions by $D^*(\R)$.
\end{definition}

\begin{definition}[The class $\U$ {\cite{6}}]\label{def:U}
Let $\U$ denote the class of functions which are uniform limits of sequences of Darboux functions.
\end{definition}



Recall that $f$ is \emph{Baire $1$} if it is a pointwise limit of continuous functions.
We will use the following classical theorem:

\begin{theorem}[Baire's continuity theorem]\label{thm:baire-continuity}
If $f:\mathbb{R}\to\mathbb{R}$ is Baire $1$, then the set $
C(f):=\{x\in\mathbb{R}:\ f\text{ is continuous at }x\}$ 
is a dense $G_\delta$ subset of $\mathbb{R}$. In particular, $C(f)$ is dense in every nonempty open interval.
\end{theorem}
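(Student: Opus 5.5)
The plan is the classical argument through oscillation sets and the Baire category theorem. Write $f=\lim_n f_n$ pointwise with each $f_n$ continuous, and for $\varepsilon>0$ set
\[
F_\varepsilon=\{x\in\R:\ \omega_f(x)\geq\varepsilon\},
\]
where $\omega_f(x)=\inf_{\delta>0}\sup\{|f(u)-f(v)|: u,v\in(x-\delta,x+\delta)\}$ is the oscillation of $f$ at $x$. Each $F_\varepsilon$ is closed, because $\{x:\omega_f(x)<\varepsilon\}$ is open. Since $\R\setminus C(f)=\bigcup_{k\geq1}F_{1/k}$, the set $C(f)$ is a $G_\delta$. It remains to show that $C(f)$ is dense. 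By the Baire category theorem on $\R$, and since a countable union of closed nowhere dense sets is meager, it suffices to show that each $F_\varepsilon$ is nowhere dense. Its complement in any interval is then comeager, hence dense.

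So fix $\varepsilon>0$ and a nondegenerate closed interval $J$. I will find a subinterval $J'\subseteq J$ disjoint from $F_\varepsilon$. Define
\[
E_N=\{x\in J:\ |f_m(x)-f_n(x)|\leq\varepsilon/5 \text{ for all } m,n\geq N\}.
\]
These sets are closed by continuity of the $f_n$. Their union is $J$, since $(f_n(x))$ is Cauchy for every $x$. Because $J$ is a complete metric space, the Baire category theorem gives some $N$ for which $E_N$ contains an open interval $I\subseteq J$.

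On $I$, letting $m\to\infty$ yields $|f(x)-f_N(x)|\leq\varepsilon/5$. Shrink $I$ to an open subinterval $J'$ on which $f_N$ varies by less than $\varepsilon/5$, using continuity of $f_N$. Then for $u,v\in J'$,
\[
|f(u)-f(v)|\leq \varepsilon/5+\varepsilon/5+\varepsilon/5<\varepsilon,
\]
so $\omega_f<\varepsilon$ on $J'$ and $J'\cap F_\varepsilon=\emptyset$. This shows $F_\varepsilon$ is nowhere dense and completes the argument.

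The main obstacle is the step from pointwise convergence to local uniform control. This is handled by applying Baire's theorem to the closed sets $E_N$, and it is the only place where completeness of $\R$ is used in the second stage. The final clause of the statement, density in every open interval, is just the definition of denseness.
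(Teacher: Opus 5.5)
Your proof is correct and is the standard classical argument: closedness of the oscillation sets $F_\varepsilon$ gives the $G_\delta$ part, and Baire's theorem applied to the closed Cauchy sets $E_N$ covering $J$ produces a subinterval of near-uniform convergence on which $\omega_f<\varepsilon$, so each $F_\varepsilon$ is nowhere dense. The paper itself gives no proof. It cites the result as classical and uses only the consequence that $C(f)$ meets every nonempty open interval. Your argument also fits the paper's convention of working in ZF and flagging extra axioms, since the Baire category theorem for $\R$ and its closed intervals needs no choice.
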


\begin{definition} A \emph{Q-space} \cite{25} is a topological space $(X,\tau)$ such that every subset of $X$ is $F_{\sigma}$ (equivalently $G_{\delta}$). \footnote{We will reserve the name \emph{$Q$-set space} \cite{23} for the setting when we additionally require that the space is not $\sigma$-discrete.}
\end{definition} 

\begin{definition} Given $h\neq 0$ and $f:\R\rightarrow \R$, \emph{the forward difference of order 1 of $f$ with respect to $h$} is the function $\Delta_{h}^{1}f:\R\rightarrow \R$, 
\begin{equation} 
\Delta_{h}^{1} f(x)=f(x+h)-f(x). 
\end{equation} 
For $n\geq 2$ we define \emph{the $n$-th fold forward difference of $f$ with respect to $h$} by
\begin{equation} 
\Delta_{h}^{n} f(x)=\Delta_{h}^{1} (\Delta_{h}^{n-1} f)(x).  
\end{equation} 
\end{definition}

\section{Candidate Notions for Cauchy-Hamel Continuity}
\label{sec:3} 

To make the paper self-contained, we now recall several concepts and results
from \cite{10,11}:

\begin{definition} 
A function \(f:\mathbb R\to\mathbb R\) is \emph{symmetric} at point \(x\in\mathbb R\) \cite{19} iff
\begin{equation} 
\lim_{h\to 0}\bigl(f(x+h)+f(x-h)-2f(x)\bigr)=0.
\end{equation} 
\end{definition} 

\begin{definition} 
A function \(f:\mathbb R\to\mathbb R\) is called \emph{locally almost continuous} \cite{21} if for every \(x\in\mathbb R\) and every neighborhood \(V\) of \(f(x)\), one has
\begin{equation} 
x\in \operatorname{Int}(\overline{f^{-1}(V))}.
\end{equation} 
\end{definition} 

\begin{definition} 
Function $f$ is called \emph{locally almost quasicontinuous} \cite{21} if for every \(x\in\mathbb R\) and every neighborhood \(V\) of \(f(x)\), one has
\begin{equation}
x\in \overline{\operatorname{Int}(\overline{f^{-1}(V)})}.
\end{equation} 
\end{definition}

\begin{definition} 
Function $f$ is \emph{graph continuous} \cite{20} if the closure of its graph contains the graph of a continuous function.
\end{definition} 

\begin{definition}[$\mathbb{Q}$-continuity \cite{10}]
Let $f:\mathbb{R}\rightarrow\mathbb{R}$ and $x\in\mathbb{R}.$ We say that \emph{$f$ is
$\mathbb{Q}$-continuous at $x$ from the left} if there exists $\epsilon>0$ such that for every $y, z$ with
$x-\epsilon<y<z\le x$ there exists a continuous function $f_{y,z}:[y,z]\rightarrow\mathbb{R}$ satisfying
\begin{equation}
f_{y,z}(t)=f(t) \quad \text{for all } t\in S(y,z).
\end{equation}
\label{def:qcont}
\end{definition}

$\mathbb{Q}$-continuity from the right is defined analogously (with $x\le y<z<x+\epsilon$), and
$f$ is $\mathbb{Q}$-continuous at $x$ if it is $\mathbb{Q}$-continuous at $x$ from both sides. We say $f$ is
$\mathbb{Q}$-continuous (everywhere) if it is $\mathbb{Q}$-continuous at every $x\in\mathbb{R}.$

\begin{remark}
Related notions of path continuity and path versions of the Darboux property
have been studied by Banaszewski and Marciniak
\cite{41,42,43,44}.
$\Q$-continuity is more rigid and is specifically adapted to the $\Q$-linear
structure of additive functions: continuous extendability is required along
every sufficiently local rational chord of the prescribed type rather than
along one suitably chosen path.
\end{remark} 

\begin{definition}
On the other hand we say that \emph{$f$ is bilaterally $\Q$-continuous at $x$} if
there exists $\epsilon>0$ such that for every $y, z$ with $x-\epsilon\le y<z\le x+\epsilon$ there exists
a continuous function $f_{y,z}:[y,z]\rightarrow\mathbb{R}$ satisfying
\begin{equation}
f_{y,z}(t)=f(t) \quad \text{for all } t\in S(y,z).
\end{equation}
\label{def:bilateral}
\end{definition}

\begin{theorem}[Closure and additive examples \cite{10}, Thm. 2.1]
If $f,g$ are $\mathbb{Q}$-continuous at $x_{0}$ (left/right/bilateral), then so are $f+g$, $\alpha f$ and $fg$ for any
$\alpha\in\mathbb{R}$. Moreover every additive function $H$ (i.e. $H(x+y)=H(x)+H(y))$ is
$\mathbb{Q}$-continuous everywhere.
\end{theorem}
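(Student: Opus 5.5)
The plan is to work locally: fix the side (left, right or bilateral) and take $\epsilon=\min(\epsilon_f,\epsilon_g)$, where $\epsilon_f,\epsilon_g$ are the radii supplied by the definition for $f$ and $g$ at $x_0$. Any pair $y<z$ admissible for this $\epsilon$ is then admissible for both functions. So there are continuous $f_{y,z},g_{y,z}:[y,z]\to\R$ with $f_{y,z}=f$ and $g_{y,z}=g$ on $S(y,z)$. I will take
\[
(f+g)_{y,z}=f_{y,z}+g_{y,z},\qquad (\alpha f)_{y,z}=\alpha f_{y,z},\qquad (fg)_{y,z}=f_{y,z}\,g_{y,z}.
\]
Each is continuous on $[y,z]$, since sums, scalar multiples and products of continuous functions are continuous. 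Each agrees with the corresponding combination of $f$ and $g$ at every point of $S(y,z)$, because the operations are pointwise. This settles the closure part in all three variants simultaneously. The only thing to verify is that the admissible set of pairs $(y,z)$ is monotone in $\epsilon$, which is immediate from Definitions~\ref{def:qcont} and~\ref{def:bilateral}.

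For the additive part, I will prove the strongest form, bilateral $\Q$-continuity with arbitrary $\epsilon$; this implies the one-sided versions. Fix $y<z$ and parametrize $[y,z]$ by $t=qy+(1-q)z$ with $q\in[0,1]$. For $q\in\Q$, $\Q$-linearity of $H$ gives
\[
H(qy+(1-q)z)=qH(y)+(1-q)H(z).
\]
So define $H_{y,z}(t)$ as the real affine function on $[y,z]$ interpolating $H(y)$ at $t=y$ and $H(z)$ at $t=z$:
\[
H_{y,z}(t)=H(z)+\frac{z-t}{z-y}\,\bigl(H(y)-H(z)\bigr).
\]
This function is continuous. At a point $t=qy+(1-q)z$ one has $(z-t)/(z-y)=q$, so $H_{y,z}(t)=qH(y)+(1-q)H(z)=H(t)$ whenever $q\in\Q\cap[0,1]$, that is, on $S(y,z)$.

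The one point needing care is the identity $H(qx)=qH(x)$ for rational $q$. This is the standard derivation from the Cauchy equation: induction gives integer multiples, $nH(x/n)=H(x)$ gives reciprocals, and $H(0)=0$ with $H(-x)=-H(x)$ handles signs. With that in hand there is no real obstacle. The argument uses no choice and no Hamel basis.
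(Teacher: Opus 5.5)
Your proof is correct and takes essentially the paper's approach. The additive part is exactly the affine interpolation $g_{y,z}(t)=\frac{z-t}{z-y}H(y)+\frac{t-y}{z-y}H(z)$ that the paper uses to prove every additive function is bilaterally $\Q$-continuous for arbitrary $\varepsilon$; the paper does not reprove the closure part (it cites \cite{10}), and your pointwise combination of witnesses on the common radius $\min(\epsilon_f,\epsilon_g)$ is the standard argument for it.
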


\begin{theorem}[$QC\subseteq\mathcal{U}$ \cite{10}, Thm. 2.2(3)]
Every $\mathbb{Q}$-continuous function belongs
to $\mathcal{U}$.
\end{theorem}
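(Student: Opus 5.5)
Fix a $\Q$-continuous $f$. The plan is to build Darboux functions $d_n$ with $\sup_x|f(x)-d_n(x)|\le 1/n$. The idea is to exploit the local continuous interpolations $f_{y,z}$. A uniform limit of Darboux functions is characterized, by the theorem of Bruckner--Ceder--Weiss that underlies the class $\U$ \cite{6}, through a condition on values near points. Roughly, $f\in\U$ iff for every $x$ and every $y$ in the closed set of one-sided cluster values, the relevant sets $f^{-1}$ of neighborhoods accumulate appropriately. Rather than rely on that characterization, I would aim for a direct construction.

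\textbf{Step 1.} For each $x$ choose $\epsilon_x$ witnessing left and right $\Q$-continuity. On any interval $[y,z]\subseteq(x-\epsilon_x,x]$ (or $[x,x+\epsilon_x)$), $f$ agrees on the dense set $S(y,z)$ with a continuous $f_{y,z}$. Since this holds for every subinterval, these extensions are compatible: two continuous functions agreeing on a dense set agree everywhere. Note that $S(y',z')\subseteq S(y,z)$ is false in general, since rational points of different segments differ. This is the main obstacle, so compatibility must be handled carefully. The conclusion I would actually draw is weaker. For every $t$ in the one-sided neighborhood, $f$ restricted to the countable dense set $S(y,z)\ni t$ (choosing $y$ or $z$ equal to $t$) has a continuous extension taking value $f(t)$ at $t$. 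Hence every value $f(t)$ is a one-sided limit of values $f(s)$ with $s\to t$, and intermediate values along the chord are approximated: for $\xi$ between $f(y)$ and $f(z)$, continuity of $f_{y,z}$ and density of $S(y,z)$ give $s\in S(y,z)$ with $|f(s)-\xi|<\delta$ for any $\delta>0$.

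\textbf{Step 2.} Use this approximate Darboux property to show that $f$ satisfies the hypothesis of the Bruckner--Ceder--Weiss characterization of $\U$. That hypothesis says, roughly: $f$ is the uniform limit of Darboux functions iff $f$ has an "approximate intermediate value" property on intervals, up to arbitrarily small error. Step 1 gives this locally inside $(x-\epsilon_x,x]$ and $[x,x+\epsilon_x)$. To globalize to an arbitrary interval $[a,b]$, cover $[a,b]$ by finitely many such one-sided neighborhoods using compactness. For $\xi$ strictly between $f(a)$ and $f(b)$, chain the endpoints through overlapping local windows. At some window the values straddle $\xi$ up to $\delta$, and Step 1 yields a point with $|f(s)-\xi|<\delta$.

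\textbf{Step 3.} Construct $d_n$ explicitly, or invoke the characterization. For the explicit route, enumerate a family of disjoint Cantor-like nowhere dense perfect sets accumulating at each point. Redefine $f$ on a set of measure zero so that each interval attains every value in the closed range of $f$ near it, within $1/n$. This is the standard device of modifying on $\mathfrak c$-dense null sets. The approximate intermediate values from Step 2 guarantee that the modification changes $f$ by at most $1/n$.

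I expect the main difficulty to lie in Steps 1--2. The interpolations are tied to specific rational segments, so the local approximate-Darboux property must be extracted and glued across windows without assuming the extensions are coherent.
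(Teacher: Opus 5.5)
\textbf{There is a genuine gap: Step~2 rests on a misremembered form of the Bruckner--Ceder--Weiss theorem.} The ``approximate intermediate value'' property you extract is: $f((a,b))$ is dense between $f(a)$ and $f(b)$, and each $f(t)$ is a one-sided cluster value at $t$. This is essentially the defining property of the class $\mathcal{U}_0$ that the paper quotes from \cite{6}, and $\U\subsetneq\mathcal{U}_0$.

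For example, let $f$ vanish off a countable dense set $D$ and map $D\cap I$ onto a dense subset of $\R$ for every open interval $I$. This $f$ has every property produced in Steps~1--2. Yet suppose $d$ were Darboux with $|d-f|<1$. Take $t\in D$ with $f(t)>5$ and any $s\notin D$. Then $d$ takes each of the continuum many values in $(2,3)$ at distinct points between $s$ and $t$. At each such point $f\in(1,4)$, so all of them lie in the countable set $D$, which is absurd.

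What $\U$ actually requires is a cardinality condition, and the same argument shows it is necessary. For every $x$, every $y$ strictly between $\liminf_{t\to x^+}f(t)$ and $\limsup_{t\to x^+}f(t)$, and all $\varepsilon,\delta>0$, the set $\{t\in(x,x+\delta):|f(t)-y|<\varepsilon\}$ must have cardinality $\mathfrak{c}$, and symmetrically on the left. All your witnesses come from a single countable set $S(y,z)$, so they can never give this.

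The same defect breaks Step~3 as written. Setting $d(t):=\xi$ keeps $|d(t)-f(t)|\le 1/n$ only if $f(t)$ is already within $1/n$ of $\xi$. So the modification points cannot be arbitrary Cantor sets: you need continuum many such points in every one-sided neighbourhood, for every relevant $\xi$.

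\textbf{The missing idea is to use continuum many \emph{different} rational lines.}
\begin{enumerate}
\item Take $\delta\le\varepsilon_x$ and $a,b\in(x,x+\delta)$ with $f(a)<y<f(b)$.
\item For $w\in(x,x+\delta)$, join $w$ to $a$ if $f(w)\ge y$, and to $b$ if $f(w)<y$. The chord lies in $[x,x+\varepsilon_x)$ and its endpoint values straddle $y$.
\item Hence its continuous interpolant crosses $y$ away from the fixed endpoint. Density gives a rational point $s_w$ of the chord, different from that endpoint, with $|f(s_w)-y|<\varepsilon$.
\item Then $w$ lies on the rational line through $s_w$ and $a$, or through $s_w$ and $b$. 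So each $s$ arises from only countably many $w$, and (using AC) the points $s_w$ form a set of cardinality $\mathfrak{c}$.
\end{enumerate}
You already observed correctly that $f(x)$ lies in both one-sided cluster intervals, using chords with $x$ as an endpoint. Together with the cardinality condition, these are the local conditions in the Bruckner--Ceder--Weiss characterization, which then yields $f\in\U$.

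Alternatively, with the cardinality condition in hand, your Step~3 does go through as a transfinite recursion. For each rational interval $J$ and each $\xi$ such that $J$ contains continuum many points with $|f-\xi|<1/n$, pick a fresh such point and set $d:=\xi$ there. The Darboux property then follows from the one-sided conditions by a connectedness argument, so the compactness chaining of Step~2 is unnecessary.
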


The definition from \cite{10} is not the only possible version formalizing the intuitive idea that we want all additive functions to be continuous. Here are two topological alternatives:

\begin{definition}
Define $\tau_{\AD}$ to be the initial topology on $\mathbb{R}$ generated
by $\mathcal{H}$, i.e. the smallest topology such that each $H\in\AD$ is continuous as a map
$ H:(\mathbb{R},\tau_{\AD})\rightarrow(\mathbb{R},\tau_{std})$, 
where $\tau_{std}$ is the usual topology. Equivalently, a subbasis for $\tau_{\AD}$ is
$\mathcal{S}_{\AD}:=\{H^{-1}(U):H\in\AD,U\in\tau_{std}\}$.
\end{definition}

\begin{remark} 
The idea of changing the topology in order to regularize discontinuous
additive functions has appeared previously.  Bernardi
\cite{40} fixes a Hamel basis and identifies $\R$ with the
finite-support subspace of a product of copies of $\Q$, equipped with the
induced product topology.  This construction is basis-dependent and is
different from the intrinsic topology $\tau_{\mathcal H}$ considered here.
We also note that the claim in \cite{40} that every additive map is
continuous for the product-coordinate topology requires qualification:
continuity holds for finite linear combinations of coordinate projections,
whereas an arbitrary additive map may involve infinitely many coordinates.
The topology $\tau_{\AD}$ avoids this issue by being defined as the
initial topology of the entire family of additive maps.
\end{remark} 

\begin{definition}
Define a translation-invariant topology $\tau_{\Q}$ on
$\mathbb{R}$ as follows: 
A set $U\subseteq\mathbb{R}$ is $\tau_{\Q}$-open iff for every $a\in U$ and every $x\neq 0$ there is $\varepsilon(a,x)>0$ such that 
\begin{equation} 
a+qx\in U \mbox{ for all }q\in \Q\mbox{ with }0<q<\varepsilon(a,x)
\end{equation} 
\label{def:tauq}

It is easy to see that $\tau_{\Q}$ is a topology: closure under intersection follows by taking minima across direction-dependent radii. 

\end{definition}

\begin{remark} 
We have $\tau_{std}\subseteq \tau_{\AD}\subseteq \tau_{\Q}$. The first inclusion holds because the identity map is additive. We will prove the second inclusion later, specifically during the proof of Lemma~\ref{lemma:4.3} below. 
\end{remark} 

\begin{definition}
A function $f:\mathbb{R}\rightarrow\mathbb{R}$ is $\tau_{\Q}$-continuous if it is continuous as a
map $f:(\mathbb{R},\tau_{\Q})\rightarrow(\mathbb{R},\tau_{std})$. A function $f:\mathbb{R}\rightarrow\mathbb{R}$ is
$\tau_{H}$-continuous if it is continuous as a
map $f:(\mathbb{R},\tau_{H})\rightarrow(\mathbb{R},\tau_{std})$.
\end{definition}

\section{First Results} 
\label{sec:fail} 
Our first result shows that three of the four candidate notions from \cite{11} fail Test \#3:

\begin{theorem} For each of the following properties 
there exist Baire 1 functions \(f:\mathbb R\to\mathbb R\) having that property, yet not continuous in the ordinary sense: 
\begin{enumerate}
    \item \(f\) is graph continuous.
    \item \(f\) is symmetric. 
        \item \(f\) is locally almost quasicontinuous. 
        \end{enumerate}
\end{theorem}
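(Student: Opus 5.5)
Since all three properties are pointwise or graph conditions, I will build an explicit, very simple Baire 1 function with one discontinuity for each property. Baire 1 membership will be immediate in every case, because each example has only finitely many discontinuities; a function with countably many discontinuities is Baire 1. So the work reduces to checking the property and noting that the function is discontinuous somewhere.

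For (1), graph continuity, I will take the indicator $f=\mathbbm{1}_{\{0\}}$. Its graph contains $\{(x,0):x\neq 0\}$, so the closure of the graph contains the whole line $\R\times\{0\}$. That line is the graph of the continuous function $0$. Hence $f$ is graph continuous, and it is discontinuous at $0$.

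For (2), symmetry, I will use the same $f=\mathbbm{1}_{\{0\}}$. At $x=0$ we have $f(h)+f(-h)-2f(0)=-2$ for every $h\neq 0$, so $f$ fails to be symmetric at $0$, and I must adjust the example. Instead I will take $f=\mathbbm{1}_{(0,\infty)}$. At $x=0$, for $h>0$, the symmetric difference is $f(h)+f(-h)-2f(0)=1+0-0=1$, which again fails. The right choice is an odd-type jump: $f(x)=\operatorname{sgn}(x)$ with $f(0)=0$. At $0$ we get $f(h)+f(-h)-2f(0)=0$ for all $h$. At any $x\neq 0$ the function is locally constant, so the limit is $0$ there too. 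Thus $\operatorname{sgn}$ is symmetric everywhere, discontinuous at $0$, and Baire 1.

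For (3), local almost quasicontinuity, I will again take $f=\operatorname{sgn}$, with $f(0)=0$. The points $x\neq 0$ are trivial by local constancy: $x$ lies in the interior of $f^{-1}(V)$ itself. At $x=0$, let $V$ be a neighborhood of $0$ small enough that $V\not\ni\pm1$. Then $f^{-1}(V)=\{0\}$. The closure of this set is $\{0\}$, its interior is empty, and the closure of the empty set is empty, so the condition fails. I therefore need an example where the preimage near the jump point is large. I will take $f=\mathbbm{1}_{(0,\infty)}$ with $f(0)=0$. At $0$, any $V\ni 0$ with $1\notin V$ gives $f^{-1}(V)=(-\infty,0]$. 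Its closure has interior $(-\infty,0)$, whose closure contains $0$, so the condition holds. For neighborhoods $V$ containing $1$ as well, the preimage is all of $\R$. At points $x\neq 0$ the function is locally constant. Hence this $f$ is locally almost quasicontinuous, discontinuous at $0$, and Baire 1.

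The only real obstacle is choosing, for each property, a jump whose geometry fits the definition. Symmetry needs the two one-sided values to average to $f(0)$. Quasicontinuity needs $f(0)$ to equal one of the one-sided limits, so that an interval on one side approaches $0$ inside the preimage. The verifications themselves are routine once these choices are made.
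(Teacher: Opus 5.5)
Your proof is correct and follows the same strategy as the paper: explicit elementary counterexamples built on the same three mechanisms. These are a function vanishing on a dense set, so that the graph of $0$ lies in the closure of its graph ($\mathbbm{1}_{\{0\}}$ in place of Thomae's function). Next, an odd function with value $0$ at the centre, so that the symmetric second difference vanishes identically there ($\operatorname{sgn}$ in place of $\sin(1/x)$). Last, a step function whose value at the jump equals one of its one-sided limits (the mirror image of the paper's step function).

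Your examples are a bit simpler, and your appeal to the fact that a function with countably many discontinuities is Baire 1 replaces the paper's explicit approximating sequences. In a final write-up, drop the abandoned trial examples and keep only the choices that work.
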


\begin{proof}
We treat the three assertions separately.

\medskip
\noindent
\textbf{(1) Graph continuity.}
Consider \emph{Thomae's function} (see e.g. \cite{24})
\begin{equation} 
 t(x)=
 \begin{cases}
 1/q,& x=p/q\in\mathbb Q, p,q\in \mathbb{Z}, (p,q)=1, q\ge 1,\\
 0,& x\notin\mathbb Q.
 \end{cases}
\end{equation} 
It is known that \(t\) is continuous at every irrational point and discontinuous at every rational point. It is also Baire 1 (see e.g. \cite{22}). 

We show that \(t\) is graph continuous. Indeed, let
$g(x)=0$ for all $x\in\mathbb R$. Clearly \(g\) is continuous. We claim that the graph of \(g\) is contained in the closure of the graph of \(t\). Indeed, fix \(x\in\mathbb R\). Choose a sequence of irrational numbers \((x_n)\) converging to \(x\). Then
$t(x_n)=0=g(x_n)$ 
for every \(n\), so
$(x_n,t(x_n))=(x_n,0)\to (x,0)=(x,g(x))$. 
Hence every point of the graph of \(g\) belongs to the closure of the graph of \(t\). Therefore \(t\) is graph continuous.

\medskip
\noindent
\textbf{(2) Symmetric second-difference.}
Define
\begin{equation} 
 s(x)=
 \begin{cases}
 \sin(1/x),& x\ne 0,\\
 0,& x=0.
 \end{cases}
\end{equation} 
This function is not continuous at \(0\). It is Baire 1, since for example the functions
\begin{equation} 
 s_n(x)=
 \begin{cases}
 \sin(1/x),& |x|\ge 1/n,\\
 n x\sin n,& |x|<1/n,
 \end{cases}
\end{equation} 
are continuous on \(\mathbb R\) and converge pointwise to \(s\).

We now verify the symmetric second-difference condition. If \(x\ne 0\), then \(s\) is continuous at \(x\), so $s(x+h)+s(x-h)-2s(x)\to 0$ as \(h\to 0\).
At \(x=0\), for every \(h\ne 0\),
$
 s(h)+s(-h)-2s(0)=\sin(1/h)+\sin(-1/h)-0=0.$
Thus
$
 \lim_{h\to 0}\bigl(s(h)+s(-h)-2s(0)\bigr)=0.$
So \(s\) satisfies the symmetric second-difference condition at every point, yet is not continuous.

\medskip
\noindent
\textbf{(3) Local almost quasicontinuity.}
Consider the step function
\begin{equation} 
 h(x)=
 \begin{cases}
 0,& x<0,\\
 1,& x\ge 0.
 \end{cases}
\end{equation} 
This function is not continuous at \(0\). It is Baire 1, for instance because the continuous functions
\begin{equation} 
 h_n(x)=
 \begin{cases}
 0,& x\le -1/n,\\
 nx+1,& -1/n<x<0,\\
 1,& x\ge 0
 \end{cases}
\end{equation} 
converge pointwise to \(h\).

We prove that \(h\) is locally almost quasicontinuous. Let \(x\in\mathbb R\), and let \(V\) be an open neighborhood of \(h(x)\).

If \(x<0\), then \(h(x)=0\). Choosing an interval \(( -\varepsilon,\varepsilon)\subseteq V\), we get
$(-\infty,0)\subseteq h^{-1}(V)$, 
so some open neighborhood of \(x\) is contained in \(h^{-1}(V)\). Hence 
\begin{equation} 
 x\in \operatorname{Int}(\overline{h^{-1}(V)})\subseteq \overline{\operatorname{Int}(\overline{h^{-1}(V)})}.
\end{equation}  
The case \(x>0\) is analogous.

It remains to consider \(x=0\). Since \(h(0)=1\), choose \(\varepsilon>0\) such that
$(1-\varepsilon,1+\varepsilon)\subseteq V.$
Then $[0,\infty)\subseteq h^{-1}(V)\subseteq \overline{h^{-1}(V)}$, 
so $(0,\infty)\subseteq \operatorname{Int}(\overline{h^{-1}(V)}).$
Consequently, 
\begin{equation} 
0\in \overline{(0,\infty)}\subseteq \overline{\operatorname{Int}(\overline{h^{-1}(V)})}.
\end{equation} 
Thus \(h\) is locally almost quasicontinuous at \(0\) as well.

We have therefore exhibited Baire 1, discontinuous examples for all three notions.
\end{proof}

Next we give several results that establish inclusions between remaining candidate notions: 

\begin{theorem} 
Every $\tau_{\AD}$-continuous function is $\tau_{\Q}$-continuous. 
\end{theorem}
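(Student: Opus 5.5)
The statement reduces to the topological inclusion $\tau_{\AD}\subseteq\tau_{\Q}$ announced in the earlier Remark. Indeed, if $\tau_{\AD}\subseteq\tau_{\Q}$, then for any $\tau_{\AD}$-continuous $f$ and any open $V\in\tau_{std}$ we have $f^{-1}(V)\in\tau_{\AD}\subseteq\tau_{\Q}$. Hence $f$ is $\tau_{\Q}$-continuous. So the whole proof is to show that $\tau_{\Q}$ contains the generators of $\tau_{\AD}$.

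Since $\tau_{\Q}$ is a topology (closed under finite intersections and arbitrary unions, as noted after Definition~\ref{def:tauq}), it suffices to check that every subbasic set $H^{-1}(U)$ lies in $\tau_{\Q}$, where $H\in\AD$ and $U\in\tau_{std}$. The topology generated by $\mathcal{S}_{\AD}$ is then automatically contained in $\tau_{\Q}$.

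The key step is a direct verification of the defining condition of $\tau_{\Q}$.
\begin{itemize}
\item Fix $a\in H^{-1}(U)$ and a direction $x\neq 0$. Since $H(a)\in U$ and $U$ is open, choose $\delta>0$ with $(H(a)-\delta,H(a)+\delta)\subseteq U$.
\item By $\Q$-linearity of $H$, for every rational $q$ we get $H(a+qx)=H(a)+q\,H(x)$.
\item If $H(x)=0$, then every $a+qx$ lies in $H^{-1}(U)$, and any $\varepsilon(a,x)>0$ works.
\item Otherwise take $\varepsilon(a,x)=\delta/|H(x)|$. For rational $0<q<\varepsilon(a,x)$ we get $|H(a+qx)-H(a)|=q|H(x)|<\delta$, so $a+qx\in H^{-1}(U)$.
\end{itemize}
This shows $H^{-1}(U)\in\tau_{\Q}$.

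There is no real obstacle here. The only point needing care is that $\Q$-linearity, not $\R$-linearity, gives $H(qx)=qH(x)$, and this is exactly why the definition of $\tau_{\Q}$ restricts to rational $q$. I would also remark that the same argument yields $\tau_{std}\subseteq\tau_{\Q}$ by taking $H=\mathrm{id}$.
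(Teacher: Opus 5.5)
Your proof is correct and follows the paper's route: the paper's Lemma~\ref{lemma:4.3} also shows that each $H^{-1}(V)$ is $\tau_{\Q}$-open using $H(a+qx)=H(a)+qH(x)$ for rational $q$, and then concludes $\tau_{\AD}\subseteq\tau_{\Q}$ from the definition of the initial topology. Your explicit choice $\varepsilon(a,x)=\delta/|H(x)|$, with the case $H(x)=0$ handled separately, simply makes concrete the paper's appeal to continuity of the affine map $q\mapsto H(a)+qH(x)$.
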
 
\begin{proof} 
We first need the following simple

\begin{lemma} 
Every additive function $H:\R\rightarrow \R$ is $\tau_{\Q}$-continuous. 
\label{lemma:4.3}
\end{lemma} 
\begin{proof} 

Let \(V\subseteq\R\) be Euclidean open and let \(a\in H^{-1}(V)\). Fix a direction \(x\ne0\). Since \(V\) is open and the map \(q\mapsto H(a)+qH(x)\) is continuous as a real affine map, there is \(\varepsilon_x>0\) such that \(H(a)+qH(x)\in V\) whenever \(q\in\Q\) and \(0<q<\varepsilon_x\). By \(\Q\)-linearity, $H(a+qx)=H(a)+qH(x)$, 
so \(a+qx\in H^{-1}(V)\) for all such \(q\). This is exactly Definition~\ref{def:tauq} at \(a\). Since \(a\) was arbitrary, \(H^{-1}(V)\) is \(\tau_{\Q}\)-open.
\end{proof}
Since $\tau_{\AD}$ is the initial topology generated by the additive functions, $\tau_{\AD}\subseteq \tau_{\Q}$, 
so every $\tau_{\AD}$-continuous function is $\tau_{\Q}$-continuous. 
\end{proof} 

The following observation is a simple consequence of the definitions: 

\begin{theorem} The following statements are true: 
\begin{itemize} 
\item[(a). ] Every bilaterally $\Q$-continuous function $f:\R\rightarrow \R$ is $\Q$-continuous. 
\item[(b). ] Every $\Q$-continuous function is $\tau_{\Q}$-continuous. 
\end{itemize} 
\label{thm:inclusions}
\end{theorem}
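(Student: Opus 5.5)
For part~(a) we only need to unfold Definitions~\ref{def:qcont} and~\ref{def:bilateral}. Suppose $f$ is bilaterally $\Q$-continuous at $x$ with constant $\epsilon>0$. Every pair $y<z$ with $x-\epsilon<y<z\le x$ satisfies $x-\epsilon\le y<z\le x+\epsilon$. The same holds for every pair with $x\le y<z<x+\epsilon$. So the continuous functions $f_{y,z}$ supplied by bilateral $\Q$-continuity already witness $\Q$-continuity from the left and from the right at $x$, with the same $\epsilon$. Since $x$ is arbitrary, $f$ is $\Q$-continuous.

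For part~(b), let $f$ be $\Q$-continuous and let $V\subseteq\R$ be Euclidean open. We must show that $U=f^{-1}(V)$ is $\tau_{\Q}$-open in the sense of Definition~\ref{def:tauq}. Fix $a\in U$ and a direction $x\neq 0$. We split into the cases $x>0$, where we use $\Q$-continuity from the right at $a$, and $x<0$, where we use it from the left; the two cases are symmetric.

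Take $x>0$ and let $\epsilon$ be the constant from right $\Q$-continuity at $a$. Choose a \emph{rational} $q_0>0$ with $q_0x<\epsilon$, and put $y=a$, $z=a+q_0x$. Then
\[
S(y,z)=\{a+r q_0 x : r\in\Q\cap[0,1]\}.
\]
Because $q_0$ is rational, this set contains every point $a+qx$ with $q\in\Q\cap(0,q_0]$, taking $r=q/q_0$. Choosing $q_0$ rational, so that the rational grid of the segment matches the rational multiples of $x$, is the one point that needs care.

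Let $g=f_{y,z}$ be the continuous function on $[a,a+q_0x]$ that agrees with $f$ on $S(y,z)$. In particular $g(a)=f(a)\in V$, since $a=y\in S(y,z)$. By continuity of $g$ at $a$ and openness of $V$, there is $\delta>0$ with $g(t)\in V$ for all $t\in[a,a+\delta)$. Set $\varepsilon(a,x)=\min(q_0,\delta/x)$. Then for every rational $q$ with $0<q<\varepsilon(a,x)$ we get $f(a+qx)=g(a+qx)\in V$, so $a+qx\in U$.

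For $x<0$ we argue the same way, using left $\Q$-continuity at $a$ with $y=a+q_0x$ and $z=a$. Since $a$ and $x$ were arbitrary, $U$ is $\tau_{\Q}$-open, and hence $f$ is $\tau_{\Q}$-continuous.
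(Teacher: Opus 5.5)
Your proof is correct and follows the paper's argument. Part (a) is the same unfolding of the definitions. In part (b) you use the same key device: you choose the chord endpoint $a+q_0x$ with $q_0$ \emph{rational}, so that every $a+qx$ with $q\in\Q\cap(0,q_0]$ lies in $S(a,a+q_0x)$, and you then use continuity of the extension at $a$, where it equals $f(a)$. The only differences are cosmetic: you split into the cases $x>0$ and $x<0$, whereas the paper handles both signs at once via $|h|$, and you state explicitly the fact $g(a)=f(a)$ that the paper uses implicitly.
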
 
\begin{proof}
\begin{itemize} 
\item[(a). ] This is immediate from the definitions. Bilateral $\Q$-continuity requires the extension property for every interval $[y,z]$ with $x-\varepsilon\le y<z\le x+\varepsilon$. In particular, it holds whenever $x-\varepsilon<y<z\le x$ (left continuity) and whenever $x\le y<z<x+\varepsilon$ (right continuity). Hence every bilaterally $\Q$-continuous function is $\Q$-continuous.

\item[(b). ] Let $f$ be $\Q$-continuous, let $a\in\mathbb R$, and let $V$ be an ordinary open neighborhood of $f(a)$. Choose $r > 0$ such that $(f(a)-r,f(a)+r)\subseteq V$ and set $U=f^{-1}(V)$. We prove that $U$ is $\tau_{\Q}$-open at $a$.

Fix an arbitrary direction $h\neq 0$. By $\Q$-continuity at $a$, there exists $\delta>0$ such that every interval $[y,z]\subset(a-\delta,a]$ or $[a,z]\subset[a,a+\delta)$ admits a continuous extension agreeing with $f$ on the rational interpolation set.

Choose a rational number $q_0$ with $0<q_0<\delta/|h|$, and put $t=a+q_0h$. Let $\varphi$ be the corresponding continuous extension on the interval joining $a$ and $t$. Since $\varphi$ is continuous at $a$, there exists $\eta>0$ such that $|\varphi(u)-\varphi(a)|<r$ whenever $|u-a|<\eta$ and $u$ belongs to the interval.

Now, if $q\in\mathbb Q$ satisfies $0<q<\min\{q_0,\eta/|h|\}$, then 
\begin{equation} 
a+qh=(1-\frac{q}{q_0})a+\frac{q}{q_0}t
\end{equation} is a rational convex combination of the endpoints. Hence $f(a+qh)=\varphi(a+qh)\in V$.
Thus, for every direction $h\neq0$, there exists $\varepsilon_h>0$ such that
$a+qh\in U$ whenever $q\in\mathbb Q$ and $0<q<\varepsilon_h$.

This is exactly the defining neighborhood condition for $\tau_{\Q}$. Therefore $U$ is $\tau_{\Q}$-open at $a$, proving that $f$ is $\tau_{\Q}$-continuous.
\end{itemize} 
\end{proof} 

The next result is only slightly more complicated: 

\begin{theorem}
Every $\tau_{\Q}$-continuous function $f:\mathbb R\to\mathbb R$ is locally almost continuous.
\end{theorem}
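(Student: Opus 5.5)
The plan is to reduce the statement to a density property of $\tau_{\Q}$-open sets. Fix $x\in\R$ and an open neighborhood $V$ of $f(x)$, and put $U=f^{-1}(V)$. By $\tau_{\Q}$-continuity, $U$ is $\tau_{\Q}$-open and contains $x$. It therefore suffices to prove the following claim.

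\emph{Claim.} If $U$ is $\tau_{\Q}$-open and $a\in U$, then there is $\delta>0$ such that $U$ is dense in $(a-\delta,a+\delta)$ in the ordinary sense.

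Granting the claim, $(a-\delta,a+\delta)\subseteq\overline{U}$, so $a\in \operatorname{Int}(\overline{U})$. This is exactly local almost continuity at $x$.

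To prove the claim, I treat the right side $(a,a+\delta)$; the left side is symmetric, using negative directions. For each direction $h>0$, Definition~\ref{def:tauq} gives $a+qh\in U$ for all rationals $0<q<\varepsilon(a,h)$. Define
\[
A_n=\{h>0:\ a+qh\in U \text{ for all } q\in\Q\cap(0,1/n)\}.
\]
Then $(0,\infty)=\bigcup_{n\ge1}A_n$. Since the open interval $(0,\infty)$ is a Baire space in the standard topology, some $A_n$ is not nowhere dense. Hence there are $0<c<d$ such that $A_n$ is dense in $(c,d)$.

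Next I spread this density using rational dilations. For each rational $q\in(0,1/n)$, the set $a+qA_n$ lies in $U$. It is also dense in the interval $(a+qc,\,a+qd)$, because $h\mapsto a+qh$ is a homeomorphism of $\R$. Thus $U$ is dense in every interval $(a+qc,a+qd)$ with $q\in\Q\cap(0,1/n)$.

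It remains to see that the union of these intervals is all of $(a,a+d/n)$. Take any $t\in(0,d/n)$. The condition $qc<t<qd$ says $q\in(t/d,\,t/c)$. The interval $(t/d,\min\{t/c,1/n\})$ is nonempty, since $t/d<t/c$ and $t/d<1/n$. So it contains a rational $q$, and then $a+t$ lies in the corresponding interval $(a+qc,a+qd)$.

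Density of $U$ in each interval of a family covering an open set implies density in the union. Therefore $U$ is dense in $(a,a+d/n)$. Doing the same on the left side gives some $(a-\delta',a)$, and taking $\delta$ to be the smaller of the two radii proves the claim.

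The main obstacle is that the radius $\varepsilon(a,h)$ depends on the direction $h$ in an uncontrolled, possibly non-measurable way. So no direct choice of direction reaches a prescribed point $y$. The Baire category step is what supplies uniformity on a set of directions that is dense in an interval. The rational-dilation step then turns that uniformity into density in a full one-sided neighborhood. One small point to check there is that only rational $q$ below $1/n$ are allowed, and the interval computation above handles this.
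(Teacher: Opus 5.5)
Your proof is correct, and its top-level reduction is the same as the paper's: your Claim is exactly the paper's lemma that every $\tau_{\Q}$-open set is preopen, from which local almost continuity follows at once.

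The difference is in how that lemma is proved. The paper applies the definition of $\tau_{\Q}$ at $a$ only to the two directions $h=1$ and $h=-1$. This gives $a+q\in U$ for all rationals $0<q<\varepsilon_+$ and $a-q\in U$ for all rationals $0<q<\varepsilon_-$. Since $\Q\cap(0,\varepsilon_\pm)$ is dense in $(0,\varepsilon_\pm)$, it follows directly that $(a-\varepsilon_-,a+\varepsilon_+)\subseteq\overline{U}$. So the ``main obstacle'' you describe is not actually present. Only density is needed, not reaching a prescribed point, and a single rational ray from $a$ already supplies that density.

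Your Baire-category argument over directions, followed by rational dilations, is nonetheless valid:
\begin{itemize}
\item the cover $(0,\infty)=\bigcup_n A_n$ holds;
\item the choice of $(c,d)$ is justified;
\item the intervals $(qc,qd)$ do cover $(0,d/n)$;
\item the Baire category theorem for $\R$ is provable without choice, so no extra set-theoretic assumption enters.
\end{itemize}
What your route buys is robustness. It never uses that \emph{all} small positive rational multipliers are available along one ray. It would therefore survive a weaker definition in which only, say, the multipliers $q=1/k$ for large $k$ were required, since the intervals $(c/k,d/k)$ eventually overlap and chain down to $0$. The paper's short argument relies on that density of multipliers. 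For the definition as actually given, the paper's route is much shorter.
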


\begin{proof}

We start with the following: 

\begin{lemma}
Every $\tau_{\Q}$-open subset of $\mathbb R$ is preopen in the ordinary Euclidean topology.
\end{lemma}

\begin{proof}
Let $U\subseteq\mathbb R$ be $\tau_{\Q}$-open, and fix $a\in U$. By the definition of the topology $\tau_Q$, the translate $U-a$ is a $\tau_{\Q}$-neighborhood of $0$. Applying the defining neighborhood condition to the direction $1$, there exists $\varepsilon_+>0$ such that $q\in U-a$ whenever $q\in\mathbb Q$ and $0<q<\varepsilon_+$. Applying the same condition to the direction $-1$, there exists $\varepsilon_->0$ such that $-q\in U-a$ whenever $q\in\mathbb Q$ and $0<q<\varepsilon_-$. Consequently, $a+q\in U$ for every rational $q$ with $0<q<\varepsilon_+$, and $a-q\in U$ for every rational $q$ with $0<q<\varepsilon_-$. Since the rational numbers are dense in $\mathbb R$, it follows that $(a-\varepsilon_-,a+\varepsilon_+)\subseteq\overline{U}$. Therefore $a\in\operatorname{Int}(\overline{U})$. As $a\in U$ was arbitrary, we conclude that $U\subseteq\operatorname{Int}(\overline{U})$, so $U$ is preopen.
\end{proof}

Now let $x\in\mathbb R$, and let $V$ be an ordinary open neighborhood of $f(x)$. Since $f$ is $\tau_{\Q}$-continuous, the inverse image $f^{-1}(V)$ is $\tau_{\Q}$-open. By the preceding lemma, $f^{-1}(V)$ is preopen in the ordinary topology, and hence $f^{-1}(V)\subseteq\operatorname{Int}(\overline{f^{-1}(V)})$. Because $x\in f^{-1}(V)$, we obtain $x\in\operatorname{Int}(\overline{f^{-1}(V)})$. This is precisely local almost continuity at $x$. Since $x$ was arbitrary, $f$ is locally almost continuous on $\mathbb R$.
\end{proof}

In \cite{10} it was proved that every additive function is $\Q$-continuous. A similar result is easily seen to hold for bilateral $\Q$-continuity. For completeness we include a formal statement and proof below: 

\begin{theorem} 
Every additive function $H:\R\rightarrow \R$ is bilaterally $\Q$-continuous. 
\end{theorem}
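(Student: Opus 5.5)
The plan is to write down the continuous extension explicitly; no limiting argument is needed. Since the bilateral condition asks for an extension on \emph{every} interval $[y,z]$ near $x$, we prove the stronger statement that for \emph{every} pair $y<z$ in $\R$ (so any $\epsilon>0$ works, e.g.\ $\epsilon=1$) there is a continuous $f_{y,z}:[y,z]\to\R$ agreeing with $H$ on $S(y,z)$.

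Fix $y<z$. The candidate is the affine interpolant of the endpoint values,
\begin{equation}
f_{y,z}(t)=H(y)+\frac{t-y}{z-y}\bigl(H(z)-H(y)\bigr),\qquad t\in[y,z].
\end{equation}
It is a real affine function of $t$, so it is continuous on $[y,z]$.

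To check agreement on the rational chord, write a point of $S(y,z)$ as $t=qy+(1-q)z$ with $q\in\Q\cap[0,1]$. Additivity of $H$ gives $\Q$-linearity, hence $H(t)=qH(y)+(1-q)H(z)$. On the other hand, $t-y=(1-q)(z-y)$, so
\begin{equation}
f_{y,z}(t)=H(y)+(1-q)\bigl(H(z)-H(y)\bigr)=qH(y)+(1-q)H(z).
\end{equation}
Thus $f_{y,z}=H$ on $S(y,z)$, which is exactly the requirement of Definition~\ref{def:bilateral}.

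There is no real obstacle here. The only point needing care is that $H(qu)=qH(u)$ holds for rational $q$ only; this is why the interpolation can be demanded on $S(y,z)$ and not on all of $[y,z]$. This property is the standard consequence of the Cauchy equation: it holds first for integers, then for reciprocals of integers, and then for all rationals. Finally, Theorem~\ref{thm:inclusions}(a) shows that this result also recovers the earlier result of \cite{10} that every additive function is $\Q$-continuous.
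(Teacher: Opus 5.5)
Your proof is correct and is essentially the paper's own argument. The paper uses the same affine interpolant $g_{y,z}(t)=\frac{z-t}{z-y}H(y)+\frac{t-y}{z-y}H(z)$, which is identical to yours. It checks agreement on $S(y,z)$ via $\Q$-linearity of $H$, and notes that any $\varepsilon>0$ works because the construction is valid for every pair $y<z$.
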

\begin{proof} 
Fix $x\in\mathbb R$. Since the argument is independent of $x$, any $\varepsilon>0$ will do.

Let $y<z$ satisfy $x-\varepsilon\le y<z\le x+\varepsilon$. Every point of $S(y,z)$ has the form
$t=\alpha y+(1-\alpha)z$ with $\alpha\in\mathbb Q\cap[0,1]$.

Define
$g_{y,z}(t)=\frac{z-t}{z-y}H(y)+\frac{t-y}{z-y}H(z)$
for $t\in[y,z]$. This is an affine function of $t$, hence continuous.

If $t=\alpha y+(1-\alpha)z$ with $\alpha\in\mathbb Q$, then by additivity,
\begin{equation} 
H(t)=H(\alpha y+(1-\alpha)z)=\alpha H(y)+(1-\alpha)H(z)=g_{y,z}(t),
\end{equation} 
because every additive function is $\mathbb Q$-linear.

Thus $g_{y,z}$ agrees with $H$ on $S(y,z)$. Since this holds for every interval $[y,z]$ around $x$, $H$ is bilaterally $\Q$-continuous.

\end{proof}

\section{Weak Continuity Properties of Baire 1 functions} 

Complementing the results in the previous section, we show next that the remaining candidates for weak continuity pass Test \#3:  

\begin{theorem} 
A Baire 1 local almost continuous function is continuous. 
\end{theorem}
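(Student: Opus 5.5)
Let $f$ be Baire 1 and locally almost continuous, and suppose, for contradiction, that $f$ is discontinuous at some $x_0$. The plan is to play local almost continuity at $x_0$ against Baire's continuity theorem (Theorem~\ref{thm:baire-continuity}).

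First I extract a quantitative witness of discontinuity. There are $r>0$ and a sequence $x_n\to x_0$ with $|f(x_n)-f(x_0)|\ge 2r$ for all $n$.

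Let $V=(f(x_0)-r,f(x_0)+r)$. By local almost continuity there is $\delta>0$ with
\[
(x_0-\delta,x_0+\delta)\subseteq \overline{f^{-1}(V)}.
\]
So $f^{-1}(V)$ is dense in this interval.

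The key step is to transfer this density to each $x_n$. Fix $n$ large, so that $x_n\in(x_0-\delta,x_0+\delta)$. Apply local almost continuity at $x_n$ with $W=(f(x_n)-r/2,f(x_n)+r/2)$. This gives an open interval $J_n\ni x_n$, which we may take inside $(x_0-\delta,x_0+\delta)$, with $f^{-1}(W)$ dense in $J_n$. At the same time $f^{-1}(V)$ is also dense in $J_n$, and $V\cap W=\emptyset$.

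Now use the Baire 1 hypothesis. By Theorem~\ref{thm:baire-continuity}, the set of continuity points $C(f)$ is dense in $J_n$, so pick $c\in J_n\cap C(f)$. Continuity at $c$ gives a neighborhood $I\subseteq J_n$ of $c$ on which $|f-f(c)|<r/4$. Since $I$ is open, both dense sets $f^{-1}(V)$ and $f^{-1}(W)$ meet $I$. Hence $f(I)$ meets both $V$ and $W$.

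This is the contradiction. The sets $V$ and $W$ are at distance at least $2r-r-r/2=r/2$ from each other, while $f(I)$ lies in an interval of length $r/2$. Strictly, the two points of $f(I)$ would differ by less than $r/2$, yet they would have to differ by more than $r/2$.

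The main obstacle is the bookkeeping in the distance estimate. I expect the argument to be cleaner if I shrink the radii: take $W$ of radius $r/4$ and require $|f-f(c)|<r/8$ on $I$. The conceptual core is simply that two disjoint dense preimages cannot coexist near a continuity point, and Baire 1 functions supply continuity points in every interval.
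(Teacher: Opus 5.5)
Your proof is correct and follows essentially the same route as the paper. Both arguments apply local almost continuity at $x_0$ and at a nearby bad point, use Baire's theorem to find a continuity point $c$ where both disjoint preimages are dense, and derive a contradiction from the separation of the two target intervals. The paper packages the last step as a lemma (continuity at $c$ together with $c\in\overline{f^{-1}(V)}$ forces $f(c)\in\overline{V}$), whereas you argue directly on a small neighborhood of $c$; your original radii $r$ and $r/2$ already give the strict contradiction, so no shrinking is needed.
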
 

\begin{proof}

The proof applies the following simple principle: 
\begin{lemma} 
If $f$ is continuous at $c$, $V$ is open in $\tau_{std}$  and $c\in \overline{f^{-1}(V)}$ then $f(c)\in \overline{V}$. 
\label{lemma:cont}
\end{lemma} 
\begin{proof} 
Indeed, if $f(c)\notin\overline{V}$, then the open set
$\mathbb{R}\setminus\overline{V}$ is a neighborhood of $f(c)$. By
continuity of $f$ at $c$, there is a neighborhood $U$ of $c$ such that $f(U)\subseteq \mathbb{R}\setminus\overline{V}$.  
Consequently $U\cap f^{-1}(V)=\varnothing$, contradicting
$c\in\overline{f^{-1}(V)}$.

\end{proof} 

Since $f$ is Baire $1$, Theorem~\ref{thm:baire-continuity}  implies that
$C(f)$ is a dense $G_\delta$ subset of $\mathbb{R}$. In particular,
$C(f)$ is dense in every nonempty open interval.

Fix $x\in\mathbb{R}$ and $\varepsilon>0$. Set
$
    V_x:=\left(f(x)-\frac{\varepsilon}{3},
               f(x)+\frac{\varepsilon}{3}\right)$ and 
    $A_x:=f^{-1}(V_x).$
By local almost continuity at $x$,
$x\in \operatorname{Int}(\overline{A_x}).$
Hence there exists a nonempty open interval $I$ containing $x$ such that $
    I\subseteq \overline{A_x}.$
We claim that $f(I)\subseteq \bigl(f(x)-\varepsilon,f(x)+\varepsilon\bigr)$. 

Suppose, toward a contradiction, that there exists $y\in I$ such that $
    |f(y)-f(x)|\geq \varepsilon.$ 
Define $V_y:=\left(f(y)-\frac{\varepsilon}{3},
               f(y)+\frac{\varepsilon}{3}\right)$
   and 
    $A_y:=f^{-1}(V_y).$
Local almost continuity at $y$ gives
 $y\in \operatorname{Int}(\overline{A_y}).$
Since $y\in I$ and $I$ is open, the set
$J:=I\cap \operatorname{Int}(\overline{A_y})$ 
is a nonempty open neighborhood of $y$. Since $C(f)$ is dense in every
nonempty open interval, there exists $
    c\in J\cap C(f).$ 
Thus $f$ is continuous at $c$, and $
    c\in I\subseteq \overline{A_x}$ and $c\in \overline{A_y}.$

Applying Lemma~\ref{lemma:cont} first to $V_x$ and then to $V_y$, we obtain
\begin{equation} 
    f(c)\in
    \left[f(x)-\frac{\varepsilon}{3},
          f(x)+\frac{\varepsilon}{3}\right]\mbox{ and } 
    f(c)\in
    \left[f(y)-\frac{\varepsilon}{3},
          f(y)+\frac{\varepsilon}{3}\right].
\end{equation}  
However, these two closed intervals are disjoint. Indeed, the distance
between their centers is at least $\varepsilon$, whereas the sum of their
radii is $
    \frac{\varepsilon}{3}+\frac{\varepsilon}{3}
    =\frac{2\varepsilon}{3}<\varepsilon.$
This contradiction proves the claim.

Therefore, for every $t\in I$, $|f(t)-f(x)|<\varepsilon.$
Thus $f$ is continuous at $x$. Since $x\in\mathbb{R}$ was arbitrary,
$f$ is continuous on $\mathbb{R}$.
\end{proof}

\section{Darboux-type properties of candidate notions}

One desired constraint we have on our notion of Cauchy-Hamel continuity is that some weak version of the ``continuous $\Rightarrow$ Darboux'' result continues to hold. A baseline illustration of this fact is the notion of $\mathbb{Q}$-continuity: In \cite{10} it was proved that $\mathbb{Q}$-continuous functions are the uniform limit of a sequence of Darboux functions. 

The next result implies the fact that the theorem from \cite{10} cannot be lifted to larger classes, such as that of $\tau_{\Q}$-continuous functions:

\begin{theorem}
\label{prop:6.4}
Assuming a Hamel basis exists, then 
$C(\tau_{\AD})\not\subset \mathcal{U}$; in particular, there exists a $\tau_{\AD}$-continuous function which is not in $\mathcal{U}$.
\end{theorem}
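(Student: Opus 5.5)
We need a $\tau_{\AD}$-continuous function that is not a uniform limit of Darboux functions. The natural candidates are compositions $\varphi\circ H$ with $H$ additive and $\varphi$ ordinarily continuous, or more generally $\varphi(H_1,\dots,H_k)$ with $\varphi:\R^k\to\R$ continuous. Each such function is automatically $\tau_{\AD}$-continuous, since each $H_i$ is continuous from $(\R,\tau_{\AD})$ to $\R$ and compositions of continuous maps are continuous.

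\textbf{Construction.} Fix a Hamel basis $\B$ and two distinct elements $b_1\neq b_2\in\B$. Let $H=\pi_{\B,b_1}:\R\to\Q\subseteq\R$. This map is additive, because the coordinate functional is $\Q$-linear. Its range is all of $\Q$, and it takes only rational values. Now let $\varphi:\R\to\R$ be continuous with $\varphi(q)\in\{0,1\}$ for every rational $q$, and with $\varphi$ taking both values $0$ and $1$ on $\Q$. For instance, take $\varphi(t)=\min(1,\max(0,\,2|t|-0))$ adjusted so that $\varphi(0)=0$ and $\varphi=1$ for $|t|\ge 1/2$. More simply, take $\varphi(t)=\min(1,|t|)$ and replace $H$ by $2H$ where needed; what matters is only $\varphi\circ H$ on the values of $H$. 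We want $f=\varphi\circ H$ to take only the values $0$ and $1$. A clean choice is $f(x)=\min(1,|\pi_{\B,b_1}(x)|)$, though this may take values in $(0,1)\cap\Q$. To obtain exactly two values, I will instead use $\varphi(t)=\operatorname{dist}(t,\mathbb Z)$ composed with $\sqrt2\,H$. Since $H(x)\in\Q$, the number $\sqrt2\,H(x)$ is irrational unless $H(x)=0$, so this gives no integers. The simplest route is therefore to note that the argument below only needs \emph{some} value gap, not exactly two values.

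\textbf{Key argument: a gap in the range defeats uniform approximation.} Suppose $f$ takes values only in a set $A$, with $f$ attaining values $\le 0$ somewhere and $\ge 1$ somewhere, while $A\cap(1/4,3/4)=\emptyset$. If $g$ is Darboux with $\|f-g\|_\infty<1/4$, then $g$ takes values below $1/4$ and above $3/4$. Its range is an interval, because Darboux functions have interval ranges on connected intervals. Hence there is some $x$ with $g(x)=1/2$, and then $f(x)\in(1/4,3/4)$, a contradiction. So $f\notin\U$.

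To realize the gap, take $\varphi$ continuous with $\varphi(t)=0$ for $t\le 0$, $\varphi(t)=1$ for $t\ge 1$, and linear on $[0,1]$. Set $f=\varphi\circ(H-\sqrt2\,\cdot\,)$? That composition is not admissible, because the identity is additive but mixing it in breaks the gap. The cleanest option is $f=\varphi\circ\sqrt2 H$. Its values on $\Q\setminus\{0\}$ are irrational in $(0,1)$, so no gap appears; this fails.

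\textbf{Better: use the rationality of $H$ together with a continuous $\varphi$ whose values on $\Q$ are two-valued.} No continuous function maps $\Q$ onto exactly $\{0,1\}$, since $\Q$ is dense. So I will not insist on a finite range and will prove the failure of $\U$ more carefully. I expect this to be the main obstacle.

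\textbf{Plan B (the one I will commit to).} Take $\varphi(t)=t$ and two coordinates. Set $f(x)=\pi_{\B,b_1}(x)$ itself. It is additive, hence $\tau_{\AD}$-continuous, and its range is $\Q$. Suppose $g$ is Darboux with $\|f-g\|<\varepsilon$. Then $g(\R)$ is an interval. That does not immediately contradict the hypothesis, since $g$ can fill gaps within $\varepsilon$, so a finer invariant is needed.

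I will use a restriction to intervals instead. On every interval $I$, $f$ takes every rational value, since $\{x:\pi_{\B,b_1}(x)=q\}$ is a coset of a dense subgroup. This is Darboux-like behaviour and gives no obstruction. I will therefore make $f$ locally constant in value on one part and jumping on another. Define
\[
f(x)=\max\bigl(0,\min(1,\pi_{\B,b_1}(x))\bigr)\cdot\bigl(1-\min(1,|x|)\bigr)+\ldots
\]
Honestly, the key step I expect to need is the following. Using the product closure of $C(\tau_{\AD})$, which holds because products of continuous real functions on a topological space are continuous, build $f=\psi(x)+\chi(\pi_{\B,b_1}(x))$ with $\psi$ and $\chi$ continuous. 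Then choose them so that on some interval $[a,b]$, $f$ has a value gap of fixed size between two sides that every Darboux approximant would have to cross, using Darboux on that interval and the density of the level cosets. Making that gap rigorous against values inside $[a,b]$ is the crux, and it is where I expect the main difficulty to lie.
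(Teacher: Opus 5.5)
Your proposal does not yet contain a proof. It ends with Plan B, whose ``crux'' you leave open, and that plan cannot succeed. Every candidate you consider has the form $f(x)=\varphi(x,H_1(x),\dots,H_k(x))$ with $H_i$ additive and $\varphi$ continuous on the whole Euclidean space, e.g.\ $\psi(x)+\chi(\pi_{\B,b_1}(x))$.

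On any chord $S(y,z)$, each $H_i$ coincides with the affine function $t\mapsto \frac{z-t}{z-y}H_i(y)+\frac{t-y}{z-y}H_i(z)$. So $f$ agrees on $S(y,z)$ with a function continuous on $[y,z]$. Hence every such $f$ is bilaterally $\Q$-continuous, therefore $\Q$-continuous, and therefore in $\U$ by the result $QC\subseteq\U$ quoted in Section~\ref{sec:3}. No choice of $\psi,\chi$ can give a counterexample. Your observation that no continuous function maps $\Q$ onto exactly $\{0,1\}$ is a symptom of searching in a class that lies entirely inside $\U$.

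The missing idea is that $\varphi\circ H$ is $\tau_{\AD}$-continuous as soon as $\varphi$ is continuous \emph{on the range} $H(\R)$, not on all of $\R$. With $H=\pi_{\B,b_1}$ the range is $\Q$, which is totally disconnected. So a step function jumping at an irrational point works. Concretely, fix $\alpha\notin\Q$ and let $A=H^{-1}((-\infty,\alpha))$. Since $\alpha\notin H(\R)$, we have $\R\setminus A=H^{-1}((\alpha,\infty))$. Thus $A$ is $\tau_{\AD}$-clopen, and $\mathbbm{1}_A$ is $\tau_{\AD}$-continuous, because every preimage of an open set is one of $\emptyset$, $A$, $\R\setminus A$, $\R$.

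Since $H$ maps onto $\Q$, $\mathbbm{1}_A$ takes both values $0$ and $1$. Your own gap paragraph then finishes the proof: a Darboux $g$ with $\|\mathbbm{1}_A-g\|_\infty<1/4$ would have to take the value $1/2$ somewhere, forcing $\mathbbm{1}_A$ to take a value in $(1/4,3/4)$. So $\mathbbm{1}_A\notin\U$.

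This is exactly the paper's proof, which phrases the last step via the inclusion $\U\subseteq\mathcal{U}_0$ from the literature. Your direct argument for that step is correct. The one ingredient you lacked was using the non-surjectivity of $H$ to produce a $\tau_{\AD}$-clopen set.
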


\begin{proof}
Choose a discontinuous additive non-surjective function $H:\mathbb{R}\rightarrow\mathbb{R}$. This can be done as follows: if $\B$ is a Hamel basis and $b_0\in \B$, define 
\begin{equation} 
H(b_0)=1, H(b)=0 (b\in \B\setminus \{b_0\})
\end{equation} 
and extend $H$ by linearity. Then $H(\R)=\Q$, hence $H$ is nonsurjective, nonzero, and discontinuous. 

Fix $\alpha\in\mathbb{R}\setminus H(\mathbb{R})$ and define $A:=H^{-1}((-\infty,\alpha))$. Since $(-\infty, \alpha)$ and $(\alpha, \infty)$ are Euclidean open and $\alpha \notin H(\mathbb{R})$ we have $\mathbb{R}\setminus A=H^{-1}((\alpha,\infty))$.

Therefore both $A$ and $\mathbb{R}\setminus A$ are $\tau_{\AD}$-open, i.e. $A$ is $\tau_{\AD}$-clopen. Hence the indicator function $f:=\mathbbm{1}_{A}$ is continuous as a map $(\mathbb{R},\tau_{\AD})\rightarrow(\mathbb{R},\tau_{std})$.

On the other hand, pick any $a\in A$ and any $b\notin A$. Then $f(a)=1$ and $f(b)=0$ but $f$ takes no value in $(0,1)$.

However, we know that $\mathcal{U}\subseteq\mathcal{U}_{0}$, the family of functions such that for every $a<b$ with $f(a)\ne f(b)$, the set $f((a,b))$ is dense in $(f(a),f(b))$ \cite{6}. But it is clear that $f\notin\mathcal{U}_{0}$. Consequently $f\notin\mathcal{U}$, proving $C(\tau_{\AD})\not\subseteq\U$.
\end{proof}

\begin{remark} 
Since $C(\tau_{\AD})\subseteq C(\tau_{Q})\subseteq LAC$, the same function is a witness for the larger classes containing functions not in $\U$ as well. 
\end{remark}

\section{Properties of the two topologies}

In this section we study the two topologies $\tau_{\Q}$ and $\tau_{\AD}$. We start with a desirable property that these two topologies share: 

\begin{theorem} 
Every nonempty \(\tau_{\Q}\)-open subset of \(\R\) is nonmeager and has positive outer Lebesgue measure. Consequently the same is true for every nonempty \(\tau_{\AD}\)-open subset.

In other words, the two topologies pass Test \#6. 
\end{theorem}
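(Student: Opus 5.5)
The idea is to show that a nonempty $\tau_{\Q}$-open set, after translation, is \emph{absorbing along rational rays}. This lets $\R$ be covered by countably many dilated copies of it, and then the Baire category theorem and countable subadditivity of Lebesgue outer measure finish the argument.

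Let $U$ be a nonempty $\tau_{\Q}$-open set and fix $a\in U$. For each direction $x\neq 0$, Definition~\ref{def:tauq} gives $\varepsilon(a,x)>0$ such that $a+qx\in U$ for all rational $q\in(0,\varepsilon(a,x))$. Choose an integer $n\geq 1$ with $1/n<\varepsilon(a,x)$ and take $q=1/n$. Then $a+x/n\in U$, that is,
\[
x\in n\,(U-a).
\]
Since $x\neq 0$ was arbitrary, we obtain the covering
\[
\R=\{0\}\cup\bigcup_{n\ge 1} n\,(U-a).
\]
This is the key step. It uses only the single rational ray in each direction, so no uniformity of $\varepsilon(a,x)$ in $x$ is needed. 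That lack of uniformity is exactly where I expected the difficulty to lie, and the dilation trick sidesteps it.

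\textbf{Category.} Each map $t\mapsto n(t-a)$ is a homeomorphism of $(\R,\tau_{std})$. Hence if $U$ were meager, every $n(U-a)$ would be meager. The union of these countably many sets together with the singleton $\{0\}$ would then be meager, so $\R$ itself would be meager. This contradicts Baire's theorem, so $U$ is nonmeager.

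\textbf{Measure.} The map $t\mapsto n(t-a)$ multiplies Lebesgue outer measure by $n$, so it preserves null sets. If $U$ had outer measure zero, the countable union above would have outer measure zero, so $\lambda^*(\R)=0$, which is absurd. Hence $\lambda^*(U)>0$.

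Finally, the statement for $\tau_{\AD}$ follows from the inclusion $\tau_{\AD}\subseteq\tau_{\Q}$. This inclusion was established in the proof that every $\tau_{\AD}$-continuous function is $\tau_{\Q}$-continuous: Lemma~\ref{lemma:4.3} shows that each subbasic set $H^{-1}(V)$ is $\tau_{\Q}$-open, and $\tau_{\Q}$ is a topology. Thus every nonempty $\tau_{\AD}$-open set is a nonempty $\tau_{\Q}$-open set, and the first part applies to it.
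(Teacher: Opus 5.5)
Your proof is correct and follows essentially the same route as the paper, which covers $\R$ by the countably many dilates $q^{-1}(U-a)$, $q\in\Q_{>0}$, applies Baire's theorem and countable subadditivity of outer measure, and then passes to $\tau_{\AD}$ via the inclusion $\tau_{\AD}\subseteq\tau_{\Q}$. The only differences are cosmetic: you restrict to $q=1/n$, and you adjoin $\{0\}$, which is already covered because $a\in U$.
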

\begin{proof} 
Let \(U\ne\varnothing\) be \(\tau_\Q\)-open and choose \(a\in U\). For every \(x\ne0\), the definition of \(\tau_\Q\) supplies a positive rational \(q_x\) such that \(a+q_xx\in U\). Hence
\[
  \R=\bigcup_{q\in\Q_{>0}}q^{-1}(U-a).
\]
If \(U\) were meager, then every set \(q^{-1}(U-a)\) would be meager and the displayed countable union would make \(\R\) meager in its Euclidean topology, a contradiction. If \(U\) had outer measure zero, every set in the union would have outer measure zero, again a contradiction. Finally, \(\tau_{\AD}\subseteq\tau_\Q\), so every \(\tau_{\AD}\)-open set is \(\tau_\Q\)-open. 
\end{proof}

Next we highlight a couple of undesirable properties of the two topologies. The most serious one is described in the next section: with either of the two topologies, $\mathbb{R}$ is of first Baire category in itself. Yet another such paradoxical property is described in Subsection~\ref{sec:qspace}: assuming the Continuum Hypothesis the two topologies are $Q$-spaces. We close this section with two paradoxical results for the sets of rational/irrational numbers.

\subsection{The failure of Baire's theorem}

\begin{theorem}
\label{thm:7.1}
Assuming a Hamel basis exists, the space $(\mathbb{R}, \tau_{\Q})$ is meager (of first category).
\end{theorem}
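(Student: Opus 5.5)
Fix a Hamel basis $\B$. We will write $\R=\bigcup_{n\ge 1}S_{n,\B}$ as in \eqref{eq:sn} and show that each $S_{n,\B}$ is nowhere dense in $(\R,\tau_{\Q})$. This already makes $(\R,\tau_{\Q})$ meager. It is natural to aim for the stronger statement that each $S_{n,\B}$ is $\tau_{\Q}$-closed with empty $\tau_{\Q}$-interior.

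\textbf{Step 1: closedness.} We show that $U:=\R\setminus S_{n,\B}=\{x: s_{\B}(x)>n\}$ is $\tau_{\Q}$-open. Fix $a\in U$ and a direction $x\neq 0$. Along the rational line $\{a+qx: q\in\Q\}$, each coordinate $\pi_{\B,b}(a+qx)=\pi_{\B,b}(a)+q\pi_{\B,b}(x)$ is an affine function of $q$. Only finitely many $b$ lie in $Supp_{\B}(a)\cup Supp_{\B}(x)$, so only finitely many of these coordinates are nonzero anywhere on the line. For $b\in Supp_{\B}(a)$, the coordinate vanishes for at most one value of $q$, and that value is nonzero. Hence there is $\varepsilon>0$ such that for all rational $0<q<\varepsilon$ every coordinate in $Supp_{\B}(a)$ stays nonzero. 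Then $s_{\B}(a+qx)\ge s_{\B}(a)>n$, so $a+qx\in U$ for all rational $0<q<\varepsilon$. This is exactly Definition~\ref{def:tauq}, so $S_{n,\B}$ is $\tau_{\Q}$-closed.

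\textbf{Step 2: empty interior.} Suppose $V\subseteq S_{n,\B}$ is nonempty and $\tau_{\Q}$-open, and pick $a\in V$. Choose $x$ to be a sum of $n+1$ basis elements $b_1,\dots,b_{n+1}$ outside $Supp_{\B}(a)$; this is possible because $\B$ is infinite. For every rational $q\neq 0$, the support of $a+qx$ contains $\{b_1,\dots,b_{n+1}\}$, so $s_{\B}(a+qx)\ge n+1$. On the other hand, the $\tau_{\Q}$-openness of $V$ at $a$ in direction $x$ gives some rational $q>0$ with $a+qx\in V\subseteq S_{n,\B}$. This is a contradiction, so $S_{n,\B}$ has empty interior.

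A closed set with empty interior is nowhere dense, so $\R$ is a countable union of nowhere dense sets in $\tau_{\Q}$, as claimed.

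The main point needing care is Step 1, specifically the ``finitely many coordinates, each vanishing at most once'' argument that yields a uniform $\varepsilon$ in each direction. This argument is straightforward once the coordinates are viewed as affine functions of $q$, but it is also where the Hamel basis is genuinely used. Step 2 only needs $\B$ to be infinite, which holds because $\R$ has uncountable $\Q$-dimension.
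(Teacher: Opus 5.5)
Your proof is correct and follows essentially the same route as the paper: you cover $\R$ by the sets $S_{n,\B}$, show each is $\tau_{\Q}$-closed by keeping the Hamel coordinates on the support nonzero for small rational $q$ (your observation that each such coordinate vanishes for at most one value of $q$, and that value is nonzero, plays the role of the paper's bound $|q\beta|<|\alpha|/2$), and show empty interior by moving in the direction of a sum of $n+1$ basis elements outside the support. The only difference is that you track all of $Supp_{\B}(a)$ rather than a chosen set of $n+1$ support elements.
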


\begin{proof}
Fix a Hamel basis $\mathcal{B}$ of $\mathbb{R}$ over $\mathbb{Q}$.  The following result implies the desired conclusion, that $(\mathbb{R},\tau_{\Q})$ is meager:

\begin{proposition}
\label{prop:7.2}
For every $n\in\mathbb{N}$ the set $S_{n,B}$ is $\tau_{\Q}$-closed and has empty $\tau_{\Q}$-interior. In particular, $S_{n,B}$ is $\tau_{\Q}$-nowhere dense.
\end{proposition}

\begin{proof}
First, we note that:

\begin{lemma} 
Let $x\in\mathbb{R}$ and let $b\in\mathcal{B}$ have nonzero coefficient $\alpha$ in the Hamel expansion of $x$. Let $y\in\mathbb{R}$ have coefficient $\beta$ at $b$. If $q\in\mathbb{Q}$ satisfies $|q\beta|<|\alpha|/2$ then the coefficient of $b$ in $x+qy$ is nonzero.
\end{lemma} 

\begin{proof}
The coefficient of $b$ in $x+qy$ equals $\alpha+q\beta$ and $|\alpha+q\beta|\ge|\alpha|-|q\beta|>|\alpha|/2>0$.
\end{proof}

Using this, we prove that $S_{n,\B}$ has the desired properties:
\begin{lemma} 
$\R\setminus S_{n,\B}$ is a $\tau_{\Q}$-open set. Hence $S_{n,\B}$ is $\tau_{\Q}$-closed.
\end{lemma} 
\begin{proof} 
Fix $n$ and let $x\notin S_{n,\B}$, so $s_\B(x)\ge n+1$. Choose distinct basis elements $b_{1},...,b_{n+1}\in\mathcal{B}$ occurring in the Hamel expansion of $x$ with nonzero coefficients $\alpha_{1},...,\alpha_{n+1}\in\mathbb{Q}\setminus\{0\}$.

For each $y\in\mathbb{R}\setminus\{0\}$, write its Hamel expansion and let $\beta_{i}(y)=\pi_{\B,b_i}(y)\in\mathbb{Q}$ denote the coefficient of $b_{i}$ in $y$. Define
\begin{equation} 
\epsilon_{y}:=\min\left(1,\min_{\substack{1\le i\le n+1 \\ \beta_{i}(y)\ne0}}\frac{|\alpha_{i}|}{2|\beta_{i}(y)|}\right)>0, \end{equation} 
with the convention that the inner minimum over an empty index set is $+\infty$ (so then $\epsilon_{y}=1$).

Choose $x\not\in S_{n,\B}$. For every direction $y\neq 0$, the chosen $\varepsilon_y>0$ yields 
\begin{equation} 
x+qy\not\in S_{n,\B}\mbox{ for }q\in \Q, 0<q<\varepsilon_{y}
\end{equation} 
\end{proof} 

So $S_{n,\B}$ is $\tau_{\Q}$-closed.

\begin{lemma} $S_{n,\B}$ has empty interior in $\tau_{\Q}$. 
\end{lemma} 
\begin{proof} Fix $x\in S_{n,\B}$ and let $U$ be any $\tau_{\Q}$-open neighborhood of $x$. 

Choose distinct basis elements $c_1,c_2, \ldots, c_{n+1}\not\in Supp_\B(x)$ and let $b=c_1+\ldots + c_{n+1}$. 

By definition of $\tau_{\Q}$, there exists $\epsilon>0$ such that $x+qb\in U$ for all rationals $q$ with $0<q<\epsilon$. For any such $q\ne0$, the Hamel expansion of $x+qb$ contains the support of $x$ together with $c_{1},\ldots c_{n+1}$, hence $s_\B(x+qb)\ge s_\B(x)+n+1$. In particular, choosing $q$ small but nonzero gives a point of $U$ outside $S_{n,\B}$. Thus $U\not\subseteq S_{n,\B}$ so $S_{n,\B}$ has empty $\tau_{\Q}$-interior.
\end{proof} 
\end{proof}
\renewcommand{\qedsymbol}{}
\end{proof}

Similarly for $\tau_{\AD}$ we have: 

\begin{theorem}
\label{thm:7.3}
Assuming a Hamel basis exists, the space $(\mathbb{R}, \tau_{\AD})$ is meager (of first category).
\end{theorem}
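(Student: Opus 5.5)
The plan is to reuse the decomposition $\R=\bigcup_{n\ge 1}S_{n,\B}$ from the proof of Theorem~\ref{thm:7.1}, now with respect to the coarser topology $\tau_{\AD}$. It suffices to show that each $S_{n,\B}$ is $\tau_{\AD}$-closed and has empty $\tau_{\AD}$-interior. Then each $S_{n,\B}$ is $\tau_{\AD}$-nowhere dense, since a closed set with empty interior is nowhere dense. Hence $\R$ is a countable union of $\tau_{\AD}$-nowhere dense sets, i.e. meager in $(\R,\tau_{\AD})$.

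Because $\tau_{\AD}$ is coarser than $\tau_{\Q}$, the two properties do not transfer in the same way, so I treat them separately.

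\emph{Empty interior transfers.} Every $\tau_{\AD}$-open set is $\tau_{\Q}$-open by Lemma~\ref{lemma:4.3}. So a nonempty $\tau_{\AD}$-open subset of $S_{n,\B}$ would be a nonempty $\tau_{\Q}$-open subset of $S_{n,\B}$. This contradicts Proposition~\ref{prop:7.2}.

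\emph{Closedness does not transfer and needs a new argument.} This is the only real step. I will show directly that $\R\setminus S_{n,\B}$ is $\tau_{\AD}$-open, using the coordinate functionals. Each $\pi_{\B,b}:\R\to\Q\subseteq\R$ is $\Q$-linear, hence additive, hence belongs to $\AD$. So for every Euclidean open $V$, the set $\pi_{\B,b}^{-1}(V)$ is a subbasic $\tau_{\AD}$-open set.

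Take $x\notin S_{n,\B}$ and choose distinct $b_1,\dots,b_{n+1}\in Supp_{\B}(x)$. Define
\[
W=\bigcap_{i=1}^{n+1}\pi_{\B,b_i}^{-1}\bigl(\R\setminus\{0\}\bigr).
\]
This is a finite intersection of subbasic sets, hence $\tau_{\AD}$-open, and it contains $x$. Every $y\in W$ has the $n+1$ distinct elements $b_1,\dots,b_{n+1}$ in its support, so $s_\B(y)\ge n+1$ and $W\subseteq \R\setminus S_{n,\B}$. Thus $S_{n,\B}$ is $\tau_{\AD}$-closed.

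Combining the two parts, each $S_{n,\B}$ is $\tau_{\AD}$-nowhere dense, and the union over $n$ finishes the proof.
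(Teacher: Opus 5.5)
Your proof is correct. Its skeleton matches the paper's: the same decomposition $\R=\bigcup_n S_{n,\B}$, and the same closedness argument via the coordinate functionals $\pi_{\B,b_1},\dots,\pi_{\B,b_{n+1}}$. The paper intersects preimages of the intervals $(\alpha_i-|\alpha_i|/2,\alpha_i+|\alpha_i|/2)$ where you use $\R\setminus\{0\}$; either works.

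You differ from the paper in the empty-interior step. The paper proves this directly in $\tau_{\AD}$. Given a basic neighborhood $x+\bigcap_{j=1}^m H_j^{-1}((-\epsilon_j,\epsilon_j))$, it picks fresh basis elements $c_1,\dots,c_{n+1}\notin Supp_{\B}(x)$. It then chooses small nonzero rationals $q_i$ inductively so that $w=\sum_i q_ic_i$ satisfies $|H_j(w)|<\epsilon_j$ for every $j$. Hence $x+w$ lies in the neighborhood but outside $S_{n,\B}$.

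You instead note that empty interior passes from a finer topology to a coarser one. You then import it from Proposition~\ref{prop:7.2} via $\tau_{\AD}\subseteq\tau_{\Q}$, which follows from Lemma~\ref{lemma:4.3} and the minimality of the initial topology.

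Your route is shorter. It also makes explicit why only closedness needs fresh work: closedness transfers to finer topologies, empty interior to coarser ones. The paper's route is self-contained for $\tau_{\AD}$ and does not depend on $\tau_{\Q}$ at all. Your reduction also shows the paper's inductive choice of the $q_i$ is more than needed. The single direction $b=c_1+\dots+c_{n+1}$ used for $\tau_{\Q}$ already works, since $H_j(qb)=qH_j(b)\to 0$ as $q\to 0$ through the rationals.
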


\begin{proof}
The proof strategy is similar, and relies on the following:

\begin{lemma}
\label{lem:7.4}
For every $n\in\mathbb{N}$ the set $S_{n,\B}$ is $\tau_{\AD}$-nowhere dense. Consequently, $(\mathbb{R},\tau_{\AD})$ is meager.
\end{lemma}

\begin{proof}
We show that $S_{n,\B}$ is $\tau_{\AD}$-closed and has empty $\tau_{\AD}$-interior. For each $b\in\mathcal{B}$ function $\pi_{b}$ is additive, hence $\tau_{H}$-continuous.
\begin{lemma} 
$S_{n,\B}$ is $\tau_{\AD}$-closed. 
\end{lemma} 
\begin{proof} 
Let $x\notin S_{n,\B}$ so $s_\B(x)\ge n+1$. Choose distinct basis elements $b_{1},...,b_{n+1}\in\mathcal{B}$ which occur in the Hamel expansion of $x$ with nonzero coefficients $\alpha_{1},...,\alpha_{n+1}\in\mathbb{Q}\setminus\{0\}$, i.e. $\pi_{\B,b_{i}}(x)=\alpha_{i}$. Consider the set
\begin{equation} 
U_{x}:=\bigcap_{i=1}^{n+1}\pi_{b_{i}}^{-1}\left(\left(\alpha_{i}-\frac{|\alpha_{i}|}{2},\alpha_{i}+\frac{|\alpha_{i}|}{2}\right)\right). \end{equation}
Each interval $(\alpha_{i}-\frac{|\alpha_{i}|}{2},\alpha_{i}+\frac{|\alpha_{i}|}{2})$ is Euclidean open and contains $\alpha_{i}$, and each $\pi_{\B,b}$ is $\tau_{\AD}$-continuous, hence $U_{x}$ is $\tau_{H}$-open and $x\in U_{x}$.

Moreover, if $u\in U_{x}$ then $\pi_{\B,b_{i}}(u)\ne0$ for all $i$, so $b_{1},...,b_{n+1}$ belong to the Hamel support of $u$, and therefore $s_\B(u)\ge n+1$, i.e. $u\notin S_{n,\B}$. Thus $U_{x}\subseteq\mathbb{R}\setminus S_{n,\B}$. This shows that for each $x\notin S_{n,\B}$ there exists a $\tau_{\AD}$-open neighborhood contained in $\mathbb{R}\setminus S_{n,\B}$, so $\mathbb{R}\setminus S_{n,\B}$ is $\tau_{\AD}$-open and $S_{n,\B}$ is $\tau_{\AD}$-closed.
\end{proof} 
\begin{lemma} 
$S_{n,\B}$ has empty $\tau_{\AD}$-interior.
\end{lemma}  
\begin{proof} 
Fix $x\in S_{n,\B}$ and let $V$ be any $\tau_{\AD}$ neighborhood of $x$. By translation invariance of $\tau_{\AD}$, there exists a $\tau_{\AD}$-neighborhood $W$ of $0$ with $x+W\subseteq V$. By definition of $\tau_{\AD}$, we may assume that 
\begin{equation}  W=\bigcap_{j=1}^{m}H_{j}^{-1}((-\epsilon_{j},\epsilon_{j})) \end{equation} 
for some additive functions $H_{1},...,H_{m}$ and some $\epsilon_{1},...,\epsilon_{m}>0$. Choose distinct basis elements $c_{1},...,c_{n+1}\in\mathcal{B}$ which do not occur in the Hamel expansion of $x$. We construct $w=\sum_{i=1}^{n+1}q_{i}c_{i}\in W$ with each $q_{i}\in\mathbb{Q}\setminus\{0\}$ by choosing the $q_{i}$ inductively.

Assume $q_{1},...,q_{i-1}$ are chosen and define $w_{i-1}:=\sum_{r=1}^{i-1}q_{r}c_{r}$. For each $j\in\{1,...,m\}$, set
\begin{equation} 
\delta_{j,i}:=\frac{\epsilon_{j}}{2^{i+1}}. 
\end{equation} 
Since $H_{j}$ is additive, $H_{j}(qc_{i})=q~H_{j}(c_{i})$ for $q\in\mathbb{Q}$, so by choosing $q_{i}\in\mathbb{Q}\setminus\{0\}$ sufficiently small we can ensure $|H_{j}(q_{i}c_{i})|<\delta_{j,i} \quad (j=1,...,m). $ 
Let $w_{i}:=w_{i-1}+q_{i}c_{i}$. Then for each $j$,$ |H_{j}(w_{i})|\le|H_{j}(w_{i-1})|+|H_{j}(q_{i}c_{i})|.$ 
Iterating this estimate gives, for $w:=w_{n+1}$,
\begin{equation} 
|H_{j}(w)|\le\sum_{i=1}^{n+1}|H_{j}(q_{i}c_{i})|<\sum_{i=1}^{n+1}\frac{\epsilon_{j}}{2^{i+1}}<\epsilon_{j}. \end{equation} 
Hence $w\in W$, so $x+w\in x+W\subseteq V$.

On the other hand, $w$ involves the $n+1$ basis elements $c_{1},...,c_{n+1}$ which are not in the Hamel support of $x$, so all those $c_{i}$ appear with nonzero coefficients in the Hamel expansion of $x+w$. Therefore $s_\B(x+w)\ge n+1$ and $x+w\notin S_{n,\B}$. This shows that every $\tau_{\AD}$-neighborhood $V$ of $x$ meets $\mathbb{R}\setminus S_{n,\B}$, so $S_{n,\B}$ has empty $\tau_{\AD}$-interior.
\end{proof} 
Since $S_{n,\B}$ is $\tau_{\AD}$-closed and has empty $\tau_{\AD}$-interior, it is $\tau_{\AD}$-nowhere dense. 
\end{proof}
\renewcommand{\qedsymbol}{}
\end{proof}

\subsection{The two topologies are $Q$-spaces.}
\label{sec:qspace} 
The following result gives yet another pathological property of the two topologies:

\begin{theorem}
\label{thm:7.5}
Assume CH and AC. Then
\begin{enumerate}
    \item[(a)] every subset of $\mathbb{R}$ is an $F_{\sigma}$ set in the topology $\tau_{\Q}$.
    \item[(b)] every subset of $\mathbb{R}$ is an $F_{\sigma}$ set in the topology $\tau_{\AD}$.
\end{enumerate}
\end{theorem}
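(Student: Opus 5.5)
Since $\tau_{\AD}\subseteq\tau_{\Q}$ (Lemma~\ref{lemma:4.3}), every $\tau_{\AD}$-closed set is $\tau_{\Q}$-closed. So every $\tau_{\AD}$-$F_\sigma$ set is a $\tau_{\Q}$-$F_\sigma$ set, and (a) follows from (b). I will therefore only prove (b).

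The plan is to reduce the statement to a $\sigma$-discreteness assertion. It suffices to write $\R=\bigcup_{k\ge1}D_k$, where each $D_k$ is $\tau_{\AD}$-closed and $\tau_{\AD}$-discrete in itself. Then for any $A\subseteq\R$ every $A\cap D_k$ is $\tau_{\AD}$-closed: a subset of a closed discrete set is closed. Hence $A=\bigcup_k (A\cap D_k)$ is $F_\sigma$.

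To produce the $D_k$, fix a Hamel basis $\B$. Using CH and AC, well-order it as $\B=\{b_\alpha:\alpha<\omega_1\}$. Recall from Proposition~\ref{prop:7.2} and Lemma~\ref{lem:7.4} that each $S_{n,\B}$ is $\tau_{\AD}$-closed. Let $E_n=S_{n,\B}\setminus S_{n-1,\B}$ be the set of points with Hamel support of size exactly $n$. For each $n$-tuple $r=(r_1,\dots,r_n)$ of nonzero rationals, let
\[
E_{n,r}=\Bigl\{\sum_{i=1}^n r_i b_{\alpha_i}:\ \alpha_1<\dots<\alpha_n<\omega_1\Bigr\}.
\]
There are countably many such pieces, and they cover $\R$.

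Discreteness of $E_{n,r}$ is the easy step. Fix $x=\sum r_ib_{\alpha_i}$ and use the $\tau_{\AD}$-open set $U_x$ from the proof of Lemma~\ref{lem:7.4}, which forces $\pi_{\B,b_{\alpha_i}}(y)\neq0$ for all $i$. Any $y\in U_x\cap E_{n,r}$ has support of size $n$ containing $\{b_{\alpha_1},\dots,b_{\alpha_n}\}$, so its support equals that set. Because the coefficients are attached in increasing order of indices, $y=x$.

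The main obstacle is closedness of $E_{n,r}$ in $\tau_{\AD}$; this is where I expect CH to be needed. Take $x\notin E_{n,r}$ and look for additive $H_1,\dots,H_m$ and $\varepsilon>0$ whose basic neighbourhood of $x$ misses $E_{n,r}$. If $s_\B(x)>n$, Lemma~\ref{lem:7.4} already gives this. If $s_\B(x)\le n$, I would first use coordinate projections on $\operatorname{Supp}_\B(x)$ to pin down the coordinates of $x$. The remaining danger is points of $E_{n,r}$ whose extra support lies outside $\operatorname{Supp}_\B(x)$ and carries coefficients from $r$. To exclude these I would build one additive $H$ with $H(b_\alpha)$ chosen so that every combination $\sum_{i\in I} r_i H(b_{\alpha_i})$ over the relevant index sets stays at distance $\ge1$ from the value of $H$ on the pinned coordinates. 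This means choosing $H(b_\alpha)$ by transfinite recursion along $\omega_1$. At each stage only countably many earlier constraints are active, thanks to CH, and they can be avoided because only countably many values are forbidden. If the direct recursion fails, I would fall back on a refinement. Split each $E_{n,r}$ further, using CH to index points by countable ordinals, into countably many sets of the form $\{x_\alpha:\alpha\in T\}$, where each $x_\alpha$ is isolated from all $x_\beta$ with $\beta>\alpha$ by a single additive function built by recursion. This is the standard Q-set construction adapted to the subbasis $\mathcal S_{\AD}$.
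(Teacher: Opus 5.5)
Your reduction is fine: (a) does follow from (b) because $\tau_{\AD}\subseteq\tau_{\Q}$, a countable cover by $\tau_{\AD}$-closed discrete sets would suffice, and your proof that each $E_{n,r}$ is $\tau_{\AD}$-discrete is correct. The gap is the step you flag as the main obstacle. It cannot be closed, because $E_{n,r}$ is in general \emph{not} $\tau_{\AD}$-closed.

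Take $r=(1,-1)$, so $E_{2,r}=\{b_\alpha-b_\beta:\alpha<\beta<\omega_1\}$, and consider the point $0\notin E_{2,r}$. Every $\tau_{\AD}$-neighbourhood of $0$ contains a set $W=\bigcap_{j=1}^m H_j^{-1}((-\varepsilon_j,\varepsilon_j))$ with $H_1,\dots,H_m$ additive. Cut $\R^m$ into countably many half-open boxes with side lengths $\varepsilon_1/2,\dots,\varepsilon_m/2$. Since $\B$ is uncountable, two of the points $(H_1(b_\alpha),\dots,H_m(b_\alpha))$, say with $\alpha<\beta$, lie in the same box. By additivity, $b_\alpha-b_\beta\in W\cap E_{2,r}$. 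So $0$ is in the $\tau_{\AD}$-closure of $E_{2,r}$, regardless of CH or of the well-ordering chosen. With smaller boxes the same holds for every $r$ with $\sum_i r_i=0$.

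This is exactly where your recursion breaks. For $x=0$ there are no coordinates to pin down, and the requirement becomes $|H(b_\alpha)-H(b_\beta)|\ge 1$ for all $\alpha\neq\beta$. Each earlier stage forbids an open interval rather than a single value, and countably many such intervals can cover $\R$. Indeed, no uncountable $1$-separated subset of $\R$ (or of $\R^m$) exists.

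Your fallback does not repair this. Requiring each $x_\alpha$ to be isolated from all later $x_\beta$ only makes a piece right-separated. That does not make it discrete (a point may be a limit of earlier ones), and it says nothing about points outside the piece, so closedness is still unproved.

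The missing idea is to use pieces on which additive functions can be prescribed freely. The paper takes, from the cited Banakh--Protasov theorem (this is where CH enters), a cover $\R\setminus\{0\}=\bigcup_n L_n$ by $\Q$-linearly independent sets. It then proves in Lemma~\ref{lem:7.7} that every linearly independent $L$ is $\tau_{\AD}$-closed: for $x\notin L$ one defines an additive $H$ with $H(L)$ a finite set not containing $H(x)$. Since subsets of independent sets are independent, every $A\cap L_n$ is $\tau_{\AD}$-closed, and likewise $\tau_{\Q}$-closed by Lemma~\ref{lem:7.6}. On $E_{n,r}$ no such freedom exists: $H$ is determined there by the uncountably many values $H(b_\alpha)$, which must cluster.

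Your pieces are, however, perfectly good for $\tau_{\Q}$. For finite $F\subseteq\B$ the set $E_{n,r}\cap\operatorname{span}_{\Q}(F)$ has at most $\binom{|F|}{n}$ elements, so the argument of Lemma~\ref{lem:7.6} shows that every subset of $E_{n,r}$ is $\tau_{\Q}$-closed. That proves (a) directly, using only a Hamel basis ordered as a subset of $\R$ and no CH. But since you derived (a) from (b), as written neither part is established.
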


\begin{remark} 
Balogh \cite{23} constructed a $Q$-set space in ZFC.  We note here that his construction can be adapted to show that there are topologies $\tau$ refining $\tau_{std}$ on $\R$ so that $(\R,\tau)$ is a $Q$-set space\footnote{To preserve the thematic consistency of this paper we reserve the justification of this observation to a subsequent note.}. On the other hand $(\R,\tau_{\Q})$ and $(\R,\tau_{\AD})$ are \emph{not} $Q$-set spaces, as they are $\sigma$-discrete: Lemmas~(\ref{lem:7.6}) and~(\ref{lem:7.7}) below prove that sets $L_n$ are discrete (because each of its subsets is closed).
\end{remark} 

\begin{proof}
We prove the theorem via the following two lemmas:

\begin{lemma}
\label{lem:7.6}
Assume AC. 
Let $L\subseteq\mathbb{R}$ be a $\mathbb{Q}$-linearly independent set, and let $A\subseteq L$. Then $A$ is $\tau_{\Q}$-closed.
\end{lemma}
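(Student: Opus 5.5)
We need to show that the complement $U=\R\setminus A$ is $\tau_{\Q}$-open. By Definition~\ref{def:tauq}, this means: for every $a\notin A$ and every direction $x\neq 0$, there is $\varepsilon(a,x)>0$ such that $a+qx\notin A$ for all rationals $0<q<\varepsilon(a,x)$. The key observation is that a $\Q$-linearly independent set meets any rational line $a+\Q x$ in very few points. Indeed, suppose $l_1\neq l_2$ are points of $L$ on this line, say $l_1=a+q_1x$ and $l_2=a+q_2x$. Then $l_2-l_1=(q_2-q_1)x$. A third point $l_3=a+q_3x$ would give
\[
(q_2-q_1)(l_3-l_1)=(q_3-q_1)(l_2-l_1),
\]
a nontrivial rational relation among $l_1,l_2,l_3$. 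This contradicts independence, unless the coefficients cancel, which would force $l_3$ to coincide with $l_1$ or $l_2$. So $|L\cap(a+\Q x)|\le 2$, and in fact I will just use that it is finite.

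\textbf{Main step.} Fix $a\notin A$ and $x\neq0$. The set
\[
Q_{a,x}=\{q\in\Q_{>0}: a+qx\in A\}
\]
has at most two elements, all strictly positive. Define $\varepsilon(a,x)=\min Q_{a,x}$ if $Q_{a,x}\neq\varnothing$, and $\varepsilon(a,x)=1$ otherwise. This is positive, and for rationals $0<q<\varepsilon(a,x)$ we get $a+qx\notin A$. Note that $q=0$ is excluded by the definition, and in any case $a\notin A$.

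\textbf{The main obstacle.} The only delicate step is making the counting bound rigorous, in particular handling the degenerate cases in the relation above. Since $l_1,l_2,l_3$ are distinct and independent, the coefficients $(q_2-q_1)$, $-(q_3-q_1)$ and $(q_3-q_1)-(q_2-q_1)$ must all vanish. That forces $q_1=q_2$, i.e.\ $l_1=l_2$, a contradiction.

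\textbf{Where choice enters.} No choice is needed in this lemma itself. AC is only relevant to the surrounding use, namely producing a Hamel basis, or sets $L_n$, to which the lemma is then applied.
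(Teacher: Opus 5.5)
Your proof is correct, and its skeleton matches the paper's: show that only finitely many rationals $q$ put $a+qx$ into $A$, then take $\varepsilon(a,x)$ below the smallest positive one. The difference is in how you get finiteness. The paper fixes a Hamel basis $\B$ and observes that the whole line $a+\Q x$ lies in $\Span(Supp_\B(a)\cup Supp_\B(x))$, a finite-dimensional $\Q$-space. It then concludes that the independent set $L$ meets this space in finitely many points. This is the only place where the hypothesis AC is used. You instead write down the explicit rational relation $(q_2-q_1)l_3-(q_3-q_1)l_2+(q_3-q_2)l_1=0$ among three putative points of $L$ on the line. This gives the sharp bound $|L\cap(a+\Q x)|\le 2$ without any basis. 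So your argument proves the lemma in ZF, and you are right that choice is needed only in the surrounding application (the Hamel basis and the Banakh--Protasov cover by sets $L_n$). The paper's dimension-counting argument is more general-purpose but uses choice unnecessarily. It becomes choice-free, and yields your bound of $2$, if the basis support is replaced by $\Span\{a,x\}$, a space of dimension at most $2$.
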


\begin{proof}
If $L=\emptyset$ the result is trivial, so assume that $L\neq \emptyset$. 

We show that $\mathbb{R}\setminus A$ is $\tau_{\Q}$-open. Fix $x\in\mathbb{R}\setminus A$ and $y\in\mathbb{R}\setminus\{0\}$. Choose a Hamel basis $\mathcal{B}$ of $\mathbb{R}$ over $\mathbb{Q}$, and let
$F:=Supp_{\B}(x)\cup Supp_{\B}(y).$ 
Then $F$ is finite, and for every $q\in\mathbb{Q}$ we have $x+qy\in span_{\mathbb{Q}}(F).$ 
Hence, if $x+qy\in A\subseteq L$ then necessarily
$x+qy\in L\cap span_{\mathbb{Q}}(F).$ 
Now $span_{\mathbb{Q}}(F)$ is a finite-dimensional $\mathbb{Q}$-vector space, while $L$ is $\mathbb{Q}$-linearly independent. Therefore $L\cap span_{\mathbb{Q}}(F)$ is finite.

For each $z\in L\cap span_{\mathbb{Q}}(F)$, the equation $x+qy=z$ 
has at most one solution $q\in\mathbb{Q}$, because $y\ne0$. It follows that the set
$E_{x,y}:=\{q\in\mathbb{Q}_{>0}:x+qy\in A\}$ 
is finite. Choose $\epsilon_{x,y}>0$ such that
$0<q<\epsilon_{x,y}, \ q\in\mathbb{Q}\Rightarrow q\notin E_{x,y}.$ Equivalently, $0<q<\epsilon_{x,y}, \ q\in\mathbb{Q}\Rightarrow x+qy\notin A.$ 
Since this is true for every $y\ne0$, the defining neighborhood criterion for $\tau_{\Q}$ shows that $\mathbb{R}\setminus A$ is $\tau_{\Q}$-open at $x$. As $x\in\mathbb{R}\setminus A$ was arbitrary, $\mathbb{R}\setminus A$ is $\tau_{\Q}$-open, so $A$ is $\tau_{\Q}$-closed.
\end{proof}

\begin{lemma}
\label{lem:7.7}
Assume AC. 
Let $L\subseteq\mathbb{R}$ be $\mathbb{Q}$-linearly independent. Then $L$ is closed in the topology $\tau_{\AD}$.
\end{lemma}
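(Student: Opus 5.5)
The plan is to use the Axiom of Choice to put $L$ inside a Hamel basis, and then separate any point $x\notin L$ from $L$ by a preimage of a Euclidean open set under one or two additive functions. Each such preimage is $\tau_{\AD}$-open, and so is any finite intersection of them. By Zorn's lemma, extend $L$ to a Hamel basis $\B\supseteq L$. Then every $\ell\in L$ is itself a basis element. Its Hamel expansion is $\ell = 1\cdot \ell$, so $Supp_{\B}(\ell)=\{\ell\}$ and $\pi_{\B,\ell}(\ell)=1$. Each coordinate map $\pi_{\B,b}$ is additive, hence $\tau_{\AD}$-continuous, as already used in the proof of Lemma~\ref{lem:7.4}. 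We also use the additive function $H_L$ obtained by setting $H_L(b)=1$ for $b\in L$, $H_L(b)=0$ for $b\in\B\setminus L$, and extending $\Q$-linearly; it satisfies $H_L(L)=\{1\}$.

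Fix $x\notin L$. We show that some $\tau_{\AD}$-open $U\ni x$ satisfies $U\cap L=\varnothing$, splitting into cases by the size of the support $s_{\B}(x)$.

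\emph{Case $s_{\B}(x)\ge 2$.} Exactly as in the closedness lemma inside the proof of Lemma~\ref{lem:7.4} (with $n=1$), pick $b_1\ne b_2$ in $Supp_{\B}(x)$ with coefficients $\alpha_1,\alpha_2$. Take
\[
U=\bigcap_{i=1}^{2}\pi_{\B,b_i}^{-1}\bigl((\alpha_i-|\alpha_i|/2,\ \alpha_i+|\alpha_i|/2)\bigr).
\]
Every point of $U$ has support of size at least $2$, while every element of $L$ has support of size $1$, so $U\cap L=\varnothing$.

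\emph{Case $x=0$.} Take $U=H_L^{-1}((-1/2,1/2))$. It contains $0$, and it misses $L$ because $H_L\equiv 1$ on $L$.

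\emph{Case $x=qb$ with $b\in\B$ and $q\in\Q\setminus\{0\}$.} Every $\ell\in L\setminus\{b\}$ has $\pi_{\B,b}(\ell)=0$. If $b\in L$ we also have $\pi_{\B,b}(b)=1$, and since $x\notin L$ this forces $q\ne 1$. So in either subcase $q\notin\{0,1\}$. Choose a Euclidean open interval $I\ni q$ avoiding $0$ and $1$, and set $U=\pi_{\B,b}^{-1}(I)$; then $U\cap L=\varnothing$.

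Hence $\R\setminus L$ is $\tau_{\AD}$-open, i.e.\ $L$ is $\tau_{\AD}$-closed. The same argument also shows that every subset $A\subseteq L$ is $\tau_{\AD}$-closed, which is what the remark after Theorem~\ref{thm:7.5} needs. For points outside $L$ the cases above apply unchanged. For a point $b\in L\setminus A$, the set $\pi_{\B,b}^{-1}((1/2,3/2))$ meets $L$ only in $b$, so it is a $\tau_{\AD}$-open neighborhood of $b$ missing $A$.

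I do not expect a serious obstacle. The only points needing care are the use of AC to extend $L$ to a Hamel basis, and checking that the case split is exhaustive: $s_{\B}(x)$ is either $0$, $1$, or at least $2$, which is exactly the three cases above.
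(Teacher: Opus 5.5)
Your argument is correct except for one slip in the case $x=qb$. You claim that $q\notin\{0,1\}$ ``in either subcase'', but this fails when $b\notin L$. For instance, $x=b\in\B\setminus L$ gives $q=1$, and then no interval around $q$ avoids $1$. The repair is immediate. If $b\notin L$, every $\ell\in L$ has $\pi_{\B,b}(\ell)=0$, so $\pi_{\B,b}(L)\subseteq\{0\}$, and any interval around $q\neq 0$ that avoids $0$ works. Uniformly: choose $I\ni q$ disjoint from the finite set $\pi_{\B,b}(L)\subseteq\{0,1\}$, which never contains $q$.

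With that fix, your route genuinely differs from the paper's. For each $x\notin L$, the paper builds a single additive $H$ tailored to $x$, with $H(L)$ finite and $H(x)\notin H(L)$, so that one subbasic set $H^{-1}(I)$ separates $x$ from $L$.
\begin{itemize}
\item At $x=0$ it sends $L$ to $1$.
\item If $x\notin\Span(L)$ it kills $L$ and sends $x$ to $1$.
\item If $x=\sum_{i=1}^{n}q_i l_i\in\Span(L)$ it assigns generic real values $a_i$ to the $l_i$ so that $\sum q_i a_i\notin\{0,a_1,\dots,a_n\}$. This needs the argument that finitely many hyperplanes in $\R^n$ can be avoided.
\end{itemize}
You instead extend $L$ once to a Hamel basis $\B\supseteq L$. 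The proof then reduces to one observation: every $\ell\in L$ has Hamel support $\{\ell\}$ with coefficient $1$. So the rational-valued coordinate maps $\pi_{\B,b}$ (plus $H_L$ at $0$) do all the separating, by the same support-counting mechanism as Lemma~\ref{lem:7.4}. This avoids both the $\Span(L)$ case split and the genericity argument, and it shows directly that each point of $L$ has a neighborhood meeting $L$ only in itself. The only cost is that in the case $s_{\B}(x)\ge 2$ your neighborhood is an intersection of two subbasic sets rather than a single preimage, which is immaterial.

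Your closing extension to subsets $A\subseteq L$ is correct but unnecessary. Any subset of $L$ is again $\Q$-linearly independent, so the lemma applies to $A$ directly, which is how the paper uses it in Theorem~\ref{thm:7.5}(b).
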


\begin{proof}
If $L=\emptyset$ the result is trivial, so assume that $L\neq \emptyset$. 

Recall that $\tau_{\AD}$ is the initial topology with respect to the family of all additive maps $H:\mathbb{R}\rightarrow\mathbb{R}$. Equivalently, a subbasis of $\tau_{\AD}$ is formed by the sets
$H^{-1}(U), \quad H:\mathbb{R}\rightarrow\mathbb{R} \text{ additive, } U\subseteq\mathbb{R} \text{ open.}$
Thus it is enough to prove that for every $x\in\mathbb{R}\setminus L$ there exist an additive map $H$ and an open interval $I\subseteq\mathbb{R}$ such that $H(x)\in I \text{ and } H(L)\cap I=\emptyset. $
Then $H^{-1}(I)$ is a $\tau_{\AD}$-open neighborhood of $x$ disjoint from $L$.

Fix $x\in\mathbb{R}\setminus L$. 

If $x=0$ then, because $L$ is linearly independent, define a $\Q$-linear functional on $Span_{\Q}(L)$ as follows: $\phi(l)=1$ for $l\in L$, then extend $\phi$ by additivity to $H:\R\rightarrow \R$. Then $H(0)=0$ and $H(L)=\{1\}$, so $H^{-1}((-1/2,1/2))$ is a $\tau_{\AD}$-open neighborhood of 0 disjoint from $L$. 

Next, we deal with the case $x\neq 0$. We distinguish two subcases: 

\begin{itemize} 
\item[-] \noindent \textbf{Case 1:} $x\notin Span_{\mathbb{Q}}(L)$. \\
Then $L\cup\{x\}$ is still $\mathbb{Q}$-linearly independent. Define a $\mathbb{Q}$-linear functional $\varphi:Span_{\mathbb{Q}}(L\cup\{x\})\rightarrow\mathbb{R}$ by
$\varphi(l)=0 \ (l\in L), \varphi(x)=1.$ 
This is well defined because $L\cup\{x\}$ is linearly independent over $\mathbb{Q}$. Extend to an additive map $H:\mathbb{R}\rightarrow\mathbb{R}$ by extending a basis of $Span_{\mathbb{Q}}(L\cup\{x\})$ to a Hamel basis of $\mathbb{R}$. Then $H(L)=\{0\}$, $H(x)=1$. Therefore, with $I=(1/2,3/2)$, we have $x\in H^{-1}(I)$ and $H^{-1}(I)\cap L=\emptyset$. So $x$ has a $\tau_{\AD}$-open neighborhood disjoint from $L$.

\item[-] \noindent \textbf{Case 2:} $x\in Span_{\mathbb{Q}}(L)$. \\
Since $L$ is linearly independent and $x\notin L$, there exist distinct elements $l_{1},...,l_{n}\in L$ and nonzero rationals $q_{1},...,q_{n}\in\mathbb{Q}$ such that $x=\sum_{i=1}^{n}q_{i}l_{i},$ 
and either $n\ge2$, or $n=1$ with $q_{1}\ne1$. Choose real numbers $a_{1},...,a_{n}$ such that
\begin{equation} 
s:=\sum_{i=1}^{n}q_{i}a_{i} \end{equation} 
satisfies $s\notin\{0,a_{1},...,a_{n}\}$. This is possible because the forbidden conditions
\begin{equation} 
\sum_{i=1}^{n}q_{i}a_{i}=0, \quad \sum_{i=1}^{n}q_{i}a_{i}=a_{k} \ (k=1,...,n) 
\end{equation} 
define only finitely many proper affine hyperplanes in $\mathbb{R}^{n}$ so one can choose $(a_{1},...,a_{n})$ outside their union. Now define a $\mathbb{Q}$-linear functional $\varphi:Span_{\mathbb{Q}}(L)\rightarrow\mathbb{R}$ by prescribing
\begin{equation} 
\varphi(l)=0 \ (l\in L\setminus\{l_{1},...,l_{n}\}), \quad \varphi(l_{i})=a_{i} \ (i=1,...,n), \end{equation} 
and extending $\mathbb{Q}$-linearly. This is well defined because $L$ is a $\mathbb{Q}$-basis of $Span_{\mathbb{Q}}(L)$. Then $\varphi(L)\subseteq\{0,a_{1},...,a_{n}\}$, while $\varphi(x)=\sum_{i=1}^{n}q_{i}a_{i}=s\notin\varphi(L)$.

Extend to an additive map $H:\mathbb{R}\rightarrow\mathbb{R}$. Since $H(x)\notin H(L)$, choose an open interval $I$ around $H(x)$ disjoint from $H(L)$. Then $x\in H^{-1}(I)$ and $H^{-1}(I)\cap L=\emptyset$. So $x$ again has a $\tau_{H}$-open neighbourhood disjoint from $L$.
\end{itemize} 
In both cases, every point of $\mathbb{R}\setminus L$ has a $\tau_{\AD}$-open neighborhood disjoint from $L$. Hence $\mathbb{R}\setminus L$ is $\tau_{\AD}$-open, and therefore $L$ is $\tau_{\AD}$-closed.
\end{proof}

We now return to the proof of the theorem. By a theorem of Banakh and Protasov \cite{2}, under CH the set $\mathbb{R}\setminus\{0\}$ can be covered by countably many $\mathbb{Q}$-linearly independent sets. Thus there exist sets $L_{n}\subseteq\mathbb{R}\setminus\{0\}$, $n\in\mathbb{N}$, such that $\mathbb{R}\setminus\{0\}=\bigcup_{n\in\mathbb{N}}L_{n},$ 
and each $L_{n}$ is $\mathbb{Q}$-linearly independent. Let $A\subseteq\mathbb{R}$ be arbitrary. Then
\begin{equation} 
A=(A\cap\{0\})\cup\bigcup_{n\in\mathbb{N}}(A\cap L_{n}). \end{equation} 

The singleton $\{0\}$ is both $\tau_{\AD}$-closed and $\tau_{\Q}$-closed: Indeed, every Euclidean open set is open in the two topologies, hence $A\cap\{0\}$ is closed in both topologies, since the complement of this set is either $\R\setminus \{0\}$ or $\R$, which are both open.

For part~(a), Lemma~\ref{lem:7.6} implies that each $A\cap L_n$, being a subset of the $\mathbb{Q}$-linearly independent set $L_n$, is $\tau_{\Q}$-closed. Hence $A$ is an $F_\sigma$ set in $\tau_{\Q}$.

For part~(b), each $A\cap L_n$ is itself $\mathbb{Q}$-linearly independent, since linear independence is inherited by subsets. Lemma~\ref{lem:7.7} therefore implies that each $A\cap L_n$ is $\tau_{\AD}$-closed. Consequently, $A$ is an $F_\sigma$ set in $\tau_{\AD}$.

\end{proof}

\subsection{Particular Paradoxical Examples}
The following two examples, while not pathological per se, add to the list of paradoxical properties the two topologies have. First, the rationals are a closed, nowhere dense set:

\begin{theorem}
\label{thm:7.8}
The set of irrationals $\mathbb{R}\setminus\mathbb{Q}$ is $\tau_{\Q}$-open.
\end{theorem}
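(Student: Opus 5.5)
We verify the defining neighborhood condition of Definition~\ref{def:tauq} directly at every point of $U:=\R\setminus\Q$. Fix an irrational $a$ and a direction $x\neq 0$. We must produce $\varepsilon(a,x)>0$ such that $a+qx$ is irrational for every rational $q$ with $0<q<\varepsilon(a,x)$. The approach is to show that the set
\[
E_{a,x}:=\{q\in\Q_{>0}: a+qx\in\Q\}
\]
contains at most one element. We then take $\varepsilon(a,x)$ to be that element, or $1$ if $E_{a,x}$ is empty.

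The key step is the observation that two distinct rationals $q_1\neq q_2$ cannot both lie in $E_{a,x}$. Suppose $a+q_1x=r_1$ and $a+q_2x=r_2$ with $r_1,r_2\in\Q$. Subtracting gives $(q_1-q_2)x=r_1-r_2$, so $x\in\Q$. Substituting back, $a=r_1-q_1x\in\Q$, which contradicts the irrationality of $a$.

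We split into two cases according to $x$:
\begin{itemize}
\item If $x$ is rational, then $a+qx$ is irrational for every $q\in\Q$, so $E_{a,x}=\varnothing$.
\item If $x$ is irrational, the equation $a+qx=r$ determines $q=(r-a)/x$ for each rational $r$. By the key step, at most one such $q$ is rational, so $E_{a,x}$ has at most one element.
\end{itemize}
In either case $E_{a,x}$ is finite, which is exactly what is needed to choose $\varepsilon(a,x)$ as described.

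There is no real obstacle. The only point requiring care is the direction $x$ irrational, where $a+qx$ can genuinely be rational for one value of $q$. For example, with $x=-a$ and $q=1$ we get $a+qx=0$. This is why $\varepsilon$ must depend on $x$, as the definition of $\tau_{\Q}$ permits. The argument mirrors the finiteness argument of Lemma~\ref{lem:7.6} and uses no choice.

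As a consequence, $\Q$ is $\tau_{\Q}$-closed. It also has empty $\tau_{\Q}$-interior, since by the preceding theorem every nonempty $\tau_{\Q}$-open set has positive outer measure while $\Q$ is countable. Hence $\Q$ is $\tau_{\Q}$-nowhere dense.
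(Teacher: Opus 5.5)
Your proof is correct and follows the paper's argument: for an irrational $a$ and a direction $x\neq 0$, you show that at most one rational $q$ makes $a+qx$ rational (via $(q_1-q_2)x\in\Q$), and you choose $\varepsilon(a,x)$ below that value. Your extra step that the difference argument also forces $a\in\Q$ makes the case split on whether $x$ is rational unnecessary, but this is only a cosmetic streamlining.
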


\begin{proof}
\begin{lemma} $\mathbb{R}\setminus\mathbb{Q}$ is $\tau_{\Q}$-open. 
\end{lemma} 
\begin{proof} 
Fix $a\in\mathbb{R}\setminus\mathbb{Q}$. We show that $(\mathbb{R}\setminus\mathbb{Q})-a$ is a $\tau_{\Q}$-neighborhood of $0$. Let $x\ne0$ be arbitrary. We must find $\epsilon>0$ such that for all rationals $\lambda$ with $0<\lambda<\epsilon$,
\begin{equation} 
\lambda x\in\mathbb{R}\setminus\mathbb{Q}-a\iff a+\lambda x\in\mathbb{R}\setminus\mathbb{Q}. 
\end{equation} 
\noindent \textbf{Case 1:} $x\in\mathbb{Q}\setminus\{0\}$. Then for every rational $\lambda$, $\lambda x\in\mathbb{Q}$, so $a+\lambda x$ is irrational because (irrational) + (rational) is irrational. Hence any $\epsilon>0$ works.

\noindent \textbf{Case 2:} $x\in\mathbb{R}\setminus\mathbb{Q}$. Consider the set of ``bad'' rationals $S:=\{\lambda\in\mathbb{Q}:a+\lambda x\in\mathbb{Q}\}.$ 
We claim that $S$ has at most one element. Indeed, if $\lambda_{1}\ne\lambda_{2}$ are in $S$, then both $a+\lambda_{1}x$ and $a+\lambda_{2}x$ are rational, so subtracting yields $(\lambda_{1}-\lambda_{2})x\in\mathbb{Q}.$ 
But $\lambda_{1}-\lambda_{2}\in\mathbb{Q}^{\times}$, hence $x\in\mathbb{Q}$ contradicting $x\in\mathbb{R}\setminus\mathbb{Q}$. Thus $S$ is either empty or a singleton $\{\lambda_{0}\}$.

If $S=\emptyset$, then $a+\lambda x$ is irrational for every rational $\lambda$, so any $\epsilon>0$ works. If $S=\{\lambda_{0}\}$, choose $\epsilon>0$ with $\epsilon<\lambda_{0}$ when $\lambda_{0}>0$ (and take any $\epsilon>0$ if $\lambda_{0}\le0$). Then no rational $\lambda$ with $0<\lambda<\epsilon$ equals $\lambda_{0}$, hence $a+\lambda x\notin\mathbb{Q}$ for all such $\lambda$.

This proves that $\mathbb{R}\setminus\mathbb{Q}$ is a $\tau_{\Q}$-neighborhood of $a$. Since $a\in\mathbb{R}\setminus\mathbb{Q}$ was arbitrary, $\mathbb{R}\setminus\mathbb{Q}$ is $\tau_{\Q}$-open.
\end{proof} 
\begin{lemma} 
$(\mathbb{R}\setminus\mathbb{Q})\cap I$ is comeager in $I$ for every nonempty Euclidean interval $I$. 
\end{lemma} 
\begin{proof} 

Indeed, the set $\mathbb{Q}\cap I$ is countable, hence meager in $I$ (with the subspace Euclidean topology). Therefore $(\mathbb{R}\setminus\mathbb{Q})\cap I=I\setminus(\mathbb{Q}\cap I)$ is comeager in $I$.
\end{proof} 
\begin{lemma} 
$(\mathbb{R}\setminus\mathbb{Q})\cap I\ne I$. 
\end{lemma} 
\begin{proof} 
Since $\mathbb{Q}$ is dense in $\mathbb{R}$, every nonempty interval $I$ contains a rational number. Hence $\mathbb{Q}\cap I\ne\emptyset$ so $(\mathbb{R}\setminus\mathbb{Q})\cap I\ne I$.
\end{proof} 
\end{proof}

\begin{theorem}
\label{thm:7.9}
Assume the Axiom of Choice. Then $\mathbb{R}\setminus\mathbb{Q}$ is also $\tau_{\AD}$-open in the topology $\tau_{\AD}$.
\end{theorem}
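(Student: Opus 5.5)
Fix $a\in\R\setminus\Q$. Since $\tau_{\AD}$ is the initial topology generated by $\AD$, it suffices to find one additive function $H$ and one Euclidean open set $U$ with $a\in H^{-1}(U)\subseteq\R\setminus\Q$. Then $\R\setminus\Q$ is a union of subbasic $\tau_{\AD}$-open sets, and hence is $\tau_{\AD}$-open.

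The plan is to construct, using the Axiom of Choice, an additive $H$ that separates $a$ from all rationals. Because $a$ is irrational, the set $\{1,a\}$ is $\Q$-linearly independent. We extend $\{1,a\}$ to a Hamel basis $\B$ of $\R$ over $\Q$. This is the only point where AC is used, and it is the same kind of extension already carried out in Case~1 of Lemma~\ref{lem:7.7}. Define $H$ on $\B$ by $H(1)=0$, $H(a)=1$, and $H(b)=0$ for every other $b\in\B$, and extend $\Q$-linearly to all of $\R$. This $H$ is additive, $H(a)=1$, and $H(q)=qH(1)=0$ for every $q\in\Q$. Thus $H(\Q)=\{0\}$.

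Now take $U=(1/2,3/2)$. Then $a\in H^{-1}(U)$, and $H^{-1}(U)\cap\Q=\emptyset$ because $0\notin U$. So $H^{-1}(U)\subseteq\R\setminus\Q$. Since $a$ was an arbitrary irrational, every irrational has a $\tau_{\AD}$-open neighborhood contained in $\R\setminus\Q$, and therefore $\R\setminus\Q$ is $\tau_{\AD}$-open.

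A shortcut via Lemma~\ref{lem:7.7} is available: $\{1\}$ is $\Q$-linearly independent, hence $\tau_{\AD}$-closed. However, $\Q$ is not linearly independent, so the lemma does not directly give that $\Q$ is closed, and the direct construction above is cleaner.

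There is no serious obstacle. The only delicate step is the existence of an additive $H$ vanishing on $\Q$ and nonzero at $a$, and this reduces to extending $\{1,a\}$ to a Hamel basis. One point worth recording is that $\tau_{\AD}$-openness of $\R\setminus\Q$ is strictly stronger than the $\tau_{\Q}$-openness established in Theorem~\ref{thm:7.8}, consistent with $\tau_{\AD}\subseteq\tau_{\Q}$.
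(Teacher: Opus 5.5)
Your proof is correct and is essentially the paper's argument. You extend $\{1,a\}$ to a Hamel basis, take the additive $H$ with $H(1)=0$ and $H(a)=1$, and use the subbasic $\tau_{\AD}$-open set $H^{-1}((1/2,3/2))$, which contains $a$ and misses $\Q$; your explicit checks that $\{1,a\}$ is $\Q$-independent and that $H(\Q)=\{0\}$ fill in details the paper leaves implicit.
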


\begin{proof}
For each $x\in\mathbb{R}\setminus\mathbb{Q}$ one needs to display an open set in $\tau_{H}$ containing $x$. Indeed, choose an additive function $H_{x}$ with $H_{x}(1)=0$ and $H_{x}(x)=1$. This can be done by creating a Hamel Basis $\mathcal{B}$ containing $x$ and $1$. Such a basis is guaranteed to exist under the Axiom of Choice.

Then the basic $\tau_{\AD}$-open neighborhood $H_{x}^{-1}((1/2,3/2))$ contains $x$ and avoids $\mathbb{Q}$. Thus $\mathbb{Q}$ is $\tau_{\AD}$-closed and $\mathbb{R}\setminus\mathbb{Q}$ is $\tau_{\AD}$-open.
\end{proof}

\begin{remark}
\label{rem:7.10}
A similar result holds for the transcendental numbers.
\end{remark}

\begin{theorem}
\label{thm:7.11}
The rationals are nowhere dense in $\tau_{\Q}$.
\end{theorem}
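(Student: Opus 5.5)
Nowhere dense means that the $\tau_{\Q}$-closure of $\Q$ has empty $\tau_{\Q}$-interior. So the plan has two steps: show $\Q$ is $\tau_{\Q}$-closed, then show $\Q$ has empty $\tau_{\Q}$-interior.

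\textbf{Closedness.} By Theorem~\ref{thm:7.8}, $\R\setminus\Q$ is $\tau_{\Q}$-open. Hence $\Q$ is $\tau_{\Q}$-closed and equals its own $\tau_{\Q}$-closure. It therefore suffices to show that $\Q$ contains no nonempty $\tau_{\Q}$-open set.

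\textbf{Empty interior.} Let $U\subseteq\Q$ be $\tau_{\Q}$-open and suppose some $a\in U$ exists; we derive a contradiction. Apply the defining condition of $\tau_{\Q}$ (Definition~\ref{def:tauq}) at $a$ with the irrational direction $x=\sqrt2$. This gives $\varepsilon>0$ such that $a+q\sqrt2\in U$ for every rational $q$ with $0<q<\varepsilon$. Pick any such rational $q\neq 0$. Then $a\in\Q$ and $q\sqrt2\notin\Q$, so $a+q\sqrt2$ is irrational. Thus $a+q\sqrt2\in U\setminus\Q$, contradicting $U\subseteq\Q$. So the interior is empty, and $\Q$ is $\tau_{\Q}$-nowhere dense.

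\textbf{Alternative route.} One could instead cite the earlier result that nonempty $\tau_{\Q}$-open sets have positive outer measure (equivalently, are nonmeager), which no countable set has. The direct argument with $\sqrt2$ is simpler and avoids any set-theoretic assumption, so I would use that one.

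There is no real obstacle here. The only point needing care is that nowhere density refers to the $\tau_{\Q}$-closure, not the Euclidean closure; Theorem~\ref{thm:7.8} disposes of this by showing $\Q$ is already $\tau_{\Q}$-closed.
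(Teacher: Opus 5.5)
Your proof is correct and follows the paper's argument essentially step for step: the paper also gets $\tau_{\Q}$-closedness of $\Q$ from Theorem~\ref{thm:7.8}, and gets empty interior by moving from a rational point $a$ in an arbitrary irrational direction $x$, where you fix $x=\sqrt2$. One small imprecision in your aside: positive outer measure and nonmeagerness are not equivalent properties, although the paper's earlier theorem establishes both and either one rules out a countable open set.
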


\begin{proof}
Let $a\in \Q$. For every irrational $x$, $a+qx\not \in \Q$ for every $q\in \Q, q\neq 0$. So for every $\tau_{\Q}$ open set $U\ni a$, there exists $y\in U\cap (\R \setminus \Q)$. 

On the other hand, by Theorem~\ref{thm:7.8}, the closure of $\Q$ (in $\tau_{\Q}$) is precisely $\Q$. 

We infer that $int(\overline{\Q})=\emptyset$ (where both the closure and interior are taken with respect to $\tau_{\Q}$), i.e. $\Q$ is nowhere dense in $\tau_{\Q}$. 
\end{proof}

\begin{remark} 
$\Q$ is a perfect set in $\tau_{\Q}$. Indeed, as we saw, $\Q$ is $\tau_{Q}$-closed. If $a\in \Q$ and $U$ is a $\tau_{\Q}$-neighborhood of $a$, applying Definition~\ref{def:tauq} in the direction of 1 we can find a distinct rational point $a+q\in U$. 
\end{remark} 

\begin{theorem} Assuming AC,  
the rationals are nowhere dense in $\tau_{\AD}$. 
\end{theorem}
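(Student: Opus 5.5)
The plan mirrors the proof of Theorem~\ref{thm:7.11}. Nowhere density means $Int_{\tau_{\AD}}(\overline{\Q}^{\tau_{\AD}})=\emptyset$, so two facts are needed: $\Q$ is $\tau_{\AD}$-closed, and $\Q$ has empty $\tau_{\AD}$-interior.

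The first fact is exactly Theorem~\ref{thm:7.9}. Assuming AC, it gives that $\R\setminus\Q$ is $\tau_{\AD}$-open. Hence the $\tau_{\AD}$-closure of $\Q$ is $\Q$ itself, and it suffices to show that no nonempty $\tau_{\AD}$-open set is contained in $\Q$.

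For the second fact, I would argue directly in the style of the lemma showing that $S_{n,\B}$ has empty $\tau_{\AD}$-interior (Lemma~\ref{lem:7.4}). Fix $a\in\Q$ and a $\tau_{\AD}$-open set $V\ni a$. By translation invariance and the subbasis description, there are additive $H_1,\dots,H_m$ and $\epsilon_j>0$ with
\[
a+\bigcap_{j=1}^m H_j^{-1}((-\epsilon_j,\epsilon_j))\subseteq V .
\]
Pick any irrational $c$, for instance $c=\sqrt2$. Choose a nonzero rational $q$ so small that $|q\,H_j(c)|<\epsilon_j$ for every $j$. This is possible since $H_j(qc)=qH_j(c)$ and there are only finitely many $j$. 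Then $a+qc\in V$, and $a+qc$ is irrational, so $V\not\subseteq\Q$.

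An even shorter alternative is to note that every $\tau_{\AD}$-open set is $\tau_{\Q}$-open, since $\tau_{\AD}\subseteq\tau_{\Q}$ by Lemma~\ref{lemma:4.3}. Theorem~\ref{thm:7.11} already shows that $\Q$ contains no nonempty $\tau_{\Q}$-open set, because every such $U\ni a$ meets $\R\setminus\Q$. Either route works.

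The only genuine step is the translation-invariance and basic-neighbourhood description of $\tau_{\AD}$. This is already used in Lemma~\ref{lem:7.4}, and it holds because $H(x+w)=H(x)+H(w)$. I do not expect any real obstacle. AC enters only through Theorem~\ref{thm:7.9}, for the closedness of $\Q$.
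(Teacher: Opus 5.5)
Your proof is correct. It shares the paper's outer structure: $\Q$ is $\tau_{\AD}$-closed by Theorem~\ref{thm:7.9}, and $\Q$ has empty interior. The difference lies in how you get the empty interior.

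The paper chooses a Hamel basis $\B$ with $1\in\B$, notes $\Q\subseteq S_{1,\B}$, and invokes Lemma~\ref{lem:7.4}, which says $S_{1,\B}$ is $\tau_{\AD}$-closed with empty $\tau_{\AD}$-interior. You instead perturb a rational $a$ by $qc$, with $c$ irrational and $q$ a small nonzero rational, using only $H_j(a+qc)=H_j(a)+qH_j(c)$ on a basic neighbourhood. Your alternative, combining $\tau_{\AD}\subseteq\tau_{\Q}$ with Theorem~\ref{thm:7.11}, is equally valid.

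Your step is more elementary and needs no Hamel basis, so AC is confined to Theorem~\ref{thm:7.9}. But it only yields $\operatorname{Int}(\Q)=\emptyset$, so for you the closedness of $\Q$ is indispensable. You correctly recognized this, including in the $\tau_{\Q}$ route, where interiors shrink but closures could grow when passing to the coarser topology.

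The paper's route buys the opposite. Because $S_{1,\B}$ is already closed and nowhere dense, the $\tau_{\AD}$-closure of $\Q$ lies in $S_{1,\B}$ and therefore has empty interior. So the paper's appeal to Theorem~\ref{thm:7.9} is actually superfluous there. The price is a Hamel basis and the heavier Lemma~\ref{lem:7.4}.
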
 
\begin{proof} 



Choose a Hamel basis $\B$ containing $1$, and 
recall the definition of sets $S_{n,\B}$ from equation~(\ref{eq:sn}), in particular of $S_{1,\B}$. Since $1\in \B$, $\Q\subseteq S_{1,\B}$. Lemma~\ref{lem:7.4} shows that $S_{1,\B}$ is nowhere dense. On the other hand Theorem~\ref{thm:7.9} shows that $\Q$ is $\tau_{\AD}$-closed. So the interior of $\Q$ in $\tau_{\AD}$ is included in the interior of $S_{1,\B}$ in $\tau_{\AD}$, which is the empty set. 

Thus $\mathbb Q$ is closed and has empty interior, so it is nowhere dense in $(\mathbb R,\tau_{\AD})$.
\end{proof}

\section{Characterizing $\Q$-continuity notions on rational lines}
\label{sec:4} 

In Figure~\ref{fig-1} we have claimed a number of strict separations between the five classes. However, up to this point we have only proved (nonstrict) inclusions. 

In order to compare and distinguish these notions, it will be helpful to show that some of these weak continuity notions can be characterized by the specific nature of restrictions of the given function to all rational lines. 

We start with $\tau_{\Q}$-continuity. Although no longer a candidate, this concept has the following rather elegant characterization below: 

\begin{theorem}
For a function $f:\mathbb{R}\rightarrow\mathbb{R}$, the following are equivalent:
\begin{enumerate}
\item[(1)] $f$ is $\tau_{\Q}$-continuous.
\item[(2)] For every $x<y,$ the restriction $f|_{L(x,y)}$ is continuous in the usual subspace topology.
\item[(3)] For every $x<y$, the restriction $f|_{S(x,y)}$ is continuous in the usual subspace topology.
\end{enumerate}
\label{thm:qcont-char}
\end{theorem}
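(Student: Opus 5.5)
I will prove the cycle (1)$\Rightarrow$(2)$\Rightarrow$(3)$\Rightarrow$(1). Throughout I will use that $L(x,y)=\{x+q(y-x):q\in\Q\}=x+\Q h$ with $h=y-x\neq 0$, and $S(x,y)=\{x+qh: q\in\Q\cap[0,1]\}$. The map $q\mapsto x+qh$ is a homeomorphism from $\Q$, respectively $\Q\cap[0,1]$, onto $L(x,y)$, respectively $S(x,y)$, with their Euclidean subspace topologies. So continuity of $f$ restricted to a rational line amounts to continuity of the one-variable function $g(q)=f(x+qh)$ on $\Q$, or on $\Q\cap[0,1]$.

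\textbf{(1)$\Rightarrow$(2).} Fix $x<y$, put $h=y-x$, and take a point $p=x+q_0h\in L(x,y)$ and $\eta>0$. The set $U=f^{-1}((f(p)-\eta,f(p)+\eta))$ is $\tau_{\Q}$-open and contains $p$. I will apply Definition~\ref{def:tauq} at $p$ in the two directions $h$ and $-h$. This gives $\varepsilon_\pm>0$ such that $p\pm qh\in U$ for all rationals $0<q<\varepsilon_\pm$. Hence $|g(q)-g(q_0)|<\eta$ for every rational $q$ with $|q-q_0|<\min(\varepsilon_+,\varepsilon_-)$, which is continuity of $g$ at $q_0$. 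This is the same argument as in the lemma showing that $\tau_{\Q}$-open sets are preopen.

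\textbf{(2)$\Rightarrow$(3)} is immediate, since $S(x,y)\subseteq L(x,y)$ and restricting a continuous function to a subspace keeps it continuous.

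\textbf{(3)$\Rightarrow$(1).} Let $V$ be Euclidean open, $a\in f^{-1}(V)$ and $h\neq 0$. I need $\varepsilon>0$ with $a+qh\in f^{-1}(V)$ for all rationals $0<q<\varepsilon$.
\begin{itemize}
\item If $h>0$, apply (3) to the pair $a<a+h$. Then $S(a,a+h)$ consists of the points $a+qh$ with $q\in\Q\cap[0,1]$, and continuity at the endpoint $a$ yields the required $\varepsilon\le 1$.
\item If $h<0$, apply (3) to the pair $a+h<a$. Here $S(a+h,a)$ is the same set of points $a+q h$ with $q\in\Q\cap[0,1]$, now with $a$ as the right endpoint, and continuity at $a$ again gives $\varepsilon$.
\end{itemize}
Since $a$ and $h$ were arbitrary, $f^{-1}(V)$ is $\tau_{\Q}$-open, so $f$ is $\tau_{\Q}$-continuous.

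The only step needing care is the orientation bookkeeping. The parametrisation of $S(x,y)$ in the paper uses $q x+(1-q)y$, so one must check that the rationally parametrised points near the endpoint $a$ are exactly the points $a+qh$ with small positive rational $q$. This holds because reparametrising $q\mapsto 1-q$ preserves $\Q\cap[0,1]$. I expect no real obstacle beyond this.
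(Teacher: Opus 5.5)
Your proof is correct. Your steps (1)$\Rightarrow$(2) and (2)$\Rightarrow$(3) are essentially the paper's; the difference is in how you close the loop.

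The paper proves (2)$\Rightarrow$(1) directly, and then proves (3)$\Rightarrow$(2) by a gluing argument. For $a\in L=L(x,y)$ it picks $u<a<v$ in $L$ and uses continuity of $f$ at $a$ on both $S(u,a)$ and $S(a,v)$. It must then check, via the rational reparametrization $\lambda=(q_t-q_u)/(q_a-q_u)$, that every $t\in L$ near $a$ lies in one of these two segments.

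You instead go straight from (3) to (1), using the fact that the $\tau_{\Q}$-condition is one-sided in each direction. For a fixed direction $h$ you only need the points $a+qh$ with small positive rational $q$. These are exactly the points of $S(a,a+h)$ (or $S(a+h,a)$) near its endpoint $a$, so no gluing or membership check is needed.

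Your route is shorter and makes clear that $\tau_{\Q}$-openness is just endpoint continuity along rational rays. The paper's route has the mild advantage of proving (2)$\Leftrightarrow$(3) purely as a statement about restrictions of $f$ to rational lines, without passing through $\tau_{\Q}$, and of giving (2)$\Rightarrow$(1) directly. Logically, though, your cycle yields all the same equivalences.
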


\begin{proof}
(1) $\Rightarrow$ (2). Fix $x<y$ put $d:=x-y\ne0$, and let $a\in L(x,y).$ Let $V\subseteq\mathbb{R}$
be an ordinary open neighborhood of $f(a)$, and set
$U:=f^{-1}(V)$. 
Since $f$ is $\tau_{\Q}$-continuous, $U$ is $\tau_{\Q}$-open at $a$, so $U-a$ is a $\tau_{\Q}$-neighborhood of $0$. By
Definition~\ref{def:tauq} applied to the directions $d$ and $-d$, there exists $\epsilon>0$ such that  $a+qd\in U \quad (q\in\mathbb{Q},|q|<\epsilon).$
But $L(x,y)=a+\mathbb{Q}d$, 
so the set $\{a+qd:q\in\mathbb{Q}, |q|<\epsilon\}$ is an ordinary neighborhood of $a$ inside
the subspace $L(x,y)$. Hence $f|_{L(x,y)}$ is continuous at $a$. Since $a$ was arbitrary,
$f|_{L(x,y)}$ is continuous on $L(x,y)$.

(2) $\Rightarrow$ (1). Fix $a\in\mathbb{R}$, and let $V\subseteq\mathbb{R}$ be an ordinary open neighborhood of
$f(a)$. Let $U:=f^{-1}(V)$. 
We show that $U$ is $\tau_{\Q}$-open at $a$. Let $h\ne0.$ Consider the rational affine line
$L(a,a+h)=a+\mathbb{Q}h.$ By assumption, $f|_{L(a,a+h)}$ is continuous at $a$ in the ordinary subspace topology.
Therefore there exists $\epsilon_{h}>0$ such that $f(a+qh)\in V$, therefore 
$a+qh\in U \quad \forall  q\in\mathbb{Q},|q|<\epsilon_{h}$. 
Since this holds for every $h\ne0$, Definition~\ref{def:tauq} implies that $U$ is $\tau_{\Q}$-open at $a$.
Hence $f$ is $\tau_{\Q}$-continuous.

(2) $\Rightarrow$ (3) is immediate because $S(x,y)\subseteq L(x,y).$

(3) $\Rightarrow$ (2). 
Assume that, for every $u<v$, the restriction
$
f\big|_{S(u,v)}
$
is continuous in the usual subspace topology. We prove directly that, for every
$x<y$, the restriction
$
f\big|_{L(x,y)}
$
is continuous in the usual subspace topology.

Fix $x<y$ and put $L:=L(x,y)$. Let $a\in L$, and let $V\subseteq\R$ be an
open neighborhood of $f(a)$. Choose points $u,v\in L$ such that
$
u<a<v.$
By continuity of $f|_{S(u,a)}$ at $a$, there exists $\delta_->0$ such that
\begin{equation} 
t\in S(u,a), |t-a|<\delta_-
\Longrightarrow\quad
f(t)\in V.
\end{equation} 
Likewise, by continuity of $f|_{S(a,v)}$ at $a$, there exists $\delta_+>0$
such that
\begin{equation} 
t\in S(a,v), |t-a|<\delta_+
 \Longrightarrow\quad
f(t)\in V.
\end{equation} 
Set
$
\delta:=\min\{\delta_-,\delta_+,a-u,v-a\}>0.$
We claim that
$
t\in L, |t-a|<\delta \Longrightarrow
f(t)\in V.$ 
Indeed, such a point $t$ belongs to $(u,v)$. If $t\le a$, then
$t\in S(u,a)$. To see this, write
$
u=x+q_u(y-x), a=x+q_a(y-x), t=x+q_t(y-x)$, 
with $q_u,q_a,q_t\in\Q$. Since $u\le t\le a$, we have
\begin{equation} 
\lambda:=\frac{q_t-q_u}{q_a-q_u}\in\Q\cap[0,1],
\end{equation} 
and, therefore, 
$t=(1-\lambda)u+\lambda a\in S(u,a).$
Hence $f(t)\in V$ by the choice of $\delta_-$. The case $t\ge a$ is
analogous: then $t\in S(a,v)$, and the choice of $\delta_+$ gives
$f(t)\in V$.

Thus there exists a neighborhood $
(a-\delta,a+\delta)\cap L$ 
of $a$ in the subspace $L$ whose image under $f$ is contained in $V$.
Therefore $f|_L$ is continuous at $a$. Since $a\in L$ was arbitrary,
$f|_L$ is continuous on $L$.

\end{proof}

\begin{theorem}
For a function $f:\mathbb{R}\rightarrow\mathbb{R}$, the following are equivalent:
\begin{enumerate}
\item[(1)] $f$ is bilaterally $\mathbb{Q}$-continuous.
\item[(2)] For every $x<y$ there exists a continuous function $G_{x,y}\in C(\mathbb{R})$ such that
\begin{equation} 
G_{x,y}(t)=f(t) \quad \forall t\in L(x,y) 
\end{equation} 
\end{enumerate}
Similarly, the following are equivalent:
\begin{enumerate}
\item[(3)] $f$ is $\mathbb{Q}$-continuous.
\item[(4)] There exists a family of functions $\{G_{x,y}\}_{x<y\in\mathbb{R}}$, $G_{x,y}:\mathbb{R}\rightarrow\mathbb{R}$ such that
\begin{itemize}
\item $G_{x,y}(t)=f(t) \quad \forall t\in L(x,y).$
\item The set of discontinuities of $G_{x,y}$ contains no points in $L(x,y)$.
\item The set of points $z$ for which some $G_{x,y}$ is discontinuous at $z$ has no accumulation points in $\R$, that is, it is discrete and closed. 
\end{itemize}
\end{enumerate}
\label{character-qcont}
\end{theorem}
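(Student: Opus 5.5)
The plan is to reduce everything to the behaviour of $f$ on a single rational line $L=L(x,y)=x+\Q(y-x)$. The basic observation is that for $u<v$ in $L$ we have $S(u,v)=L\cap[u,v]$: a point $t\in L\cap[u,v]$ is a rational convex combination of $u$ and $v$, by the same computation as in the proof of (3)$\Rightarrow$(2) of Theorem~\ref{thm:qcont-char}. Hence, whenever $u<v$ lie in $L$ and $f|_{S(u,v)}$ admits a continuous extension to $[u,v]$, the map $f|_{L\cap[u,v]}$ is uniformly continuous. Moreover, since $L$ is dense in $[u,v]$, any two continuous functions that agree with $f$ on $L\cap[u,v]$ coincide on $[u,v]$. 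This uniqueness lets local extensions be glued.

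\textbf{(2)$\Rightarrow$(1) and (4)$\Rightarrow$(3).} For (2)$\Rightarrow$(1), given $y<z$ I restrict $G_{y,z}$ to $[y,z]$; since $S(y,z)\subseteq L(y,z)$, every $\varepsilon$ works. For (4)$\Rightarrow$(3), fix $x$. The discontinuity set $D$ of the family is closed and discrete, so I choose $\varepsilon>0$ with $(x-\varepsilon,x+\varepsilon)\cap D\subseteq\{x\}$. Take $x-\varepsilon<y<z\le x$. Then $G_{y,z}$ is continuous on $[y,z)$, because $D$ misses $(x-\varepsilon,x)$. It is also continuous at $z$: if $z<x$ this is again because $z\notin D$, and if $z=x$ it is because $z\in L(y,z)$, where $G_{y,z}$ has no discontinuities. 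So $G_{y,z}|_{[y,z]}$ is the required extension. The right side is symmetric.

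\textbf{(1)$\Rightarrow$(2).} Fix $L$ and a point $p\in\R$, which need not lie in $L$; this is the key point, since bilateral continuity is assumed at every real point. Let $\varepsilon_p$ be given by Definition~\ref{def:bilateral}, and use density of $L$ to pick $y_p,z_p\in L$ with $p-\varepsilon_p<y_p<p<z_p<p+\varepsilon_p$. Then $f|_L$ is uniformly continuous on $L\cap[y_p,z_p]$, hence on $L\cap(y_p,z_p)$, an $L$-neighbourhood of $p$. Covering $[-N,N]$ by finitely many such open intervals and using a Lebesgue number, I get that $f|_L$ is uniformly continuous on $L\cap[-N,N]$ for every $N$. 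It therefore extends uniquely to a continuous $G_{x,y}$ on $\R$, obtained by taking limits along $L$; the uniqueness remark above makes the extensions for different $N$ compatible.

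\textbf{(3)$\Rightarrow$(4).} This is the step I expect to be the main obstacle, because of the uniform third condition. Fix $L$. For each real $p$, Definition~\ref{def:qcont} gives $\varepsilon_p$ such that $f|_L$ is uniformly continuous on $L\cap[y,z]$ whenever $y<z$ lie in $L\cap(p-\varepsilon_p,p]$, and likewise on the right.
\begin{itemize}
\item Hence the one-sided limits of $f|_L$ at $p$ along $L$ exist.
\item If $p\in L$, both one-sided limits equal $f(p)$, because one may take $z=p$ or $y=p$.
\item If $w\in(p-\varepsilon_p,p)\setminus L$, choose $y<w<z$ in $L\cap(p-\varepsilon_p,p)$. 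Uniform continuity on $L\cap[y,z]$ then forces the two one-sided limits at $w$ to coincide. The same holds on $(p,p+\varepsilon_p)$.
\end{itemize}
I define $G_{x,y}=f$ on $L$, let $G_{x,y}$ equal the common limit at every real point where the left and right limits agree, and set it arbitrarily at the remaining points, say equal to the left limit. Then $G_{x,y}$ is continuous off the set where the limits disagree, and this set avoids $L$. The bullet argument shows that inside $(p-\varepsilon_p,p+\varepsilon_p)$ the only possible discontinuity of any $G_{x,y}$ is $p$ itself. The crucial point is that $\varepsilon_p$ does not depend on the line. So the union over all lines of the discontinuity sets has no accumulation point, as the third condition requires.
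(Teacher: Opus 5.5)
Your treatment of (2)$\Rightarrow$(1) and (4)$\Rightarrow$(3) matches the paper's, and your Lebesgue-number argument for (1)$\Rightarrow$(2) is a repackaging of the paper's observation that the limit along $L$ agrees, near each point, with a local witness. Your plan for (3)$\Rightarrow$(4) is also the paper's: take limits along $L$, and use that $\varepsilon_p$ does not depend on the line. There is, however, one genuine error. Your first bullet in (3)$\Rightarrow$(4) is false when $p\notin L$. In that case every admissible chord with both endpoints in $L$ has $z<p$ strictly. So you only get uniform continuity of $f|_L$ on $L\cap[y,z]$ with $z<p$, which gives no control as $t\to p^-$. The chord $[y,p]$ does not help either: it lies on $L(y,p)\neq L$, which meets $L$ only at $y$.

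Here is a counterexample, assuming a Hamel basis. Let $K$ be a discontinuous additive function, $L=a+\Q d$ and $c=K(d)/d$. Choose $u$ with $K(u)\neq cu$, and put $p=a+u$ and $P=(p,K(a)+cu)$. Then $p\notin L$, because $K-c\,\mathrm{id}$ vanishes on $\Q d$, and $P$ is off the graph of $K$. Let $f(t)=\sin\bigl(1/\lVert (t,K(t))-P\rVert\bigr)$. For $u'<v'$, the points $(t,K(t))$ with $t\in S(u',v')$ lie on the real segment from $(u',K(u'))$ to $(v',K(v'))$. This segment meets $P$ only if $u'<p<v'$. Hence composing $\sin(1/\lVert\cdot-P\rVert)$ with the segment yields the required continuous extension on every one-sided chord, taking radius $\lvert w-p\rvert$ at $w\neq p$. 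So $f$ is $\Q$-continuous. Yet for $t\in L$ one has $f(t)=\sin\bigl(1/(\lvert t-p\rvert\sqrt{1+c^2})\bigr)$, which has no one-sided limits at $p$.

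As written, the error is load-bearing. You set $G_{x,y}$ equal to the left limit at bad points. Your unproved claim that $G_{x,y}$ is continuous wherever the one-sided limits agree also rests on every value of $G_{x,y}$ being a limit of nearby values of $f|_L$. The repair uses only your second and third bullets, and it is exactly the paper's construction. Keep $G_{x,y}=f$ on $L$. For $w\notin L$, set $G_{x,y}(w)=\lim_{t\to w,\,t\in L}f(t)$ if this limit exists, and $0$ otherwise. For $u'<v'$ in $L\cap(p-\varepsilon_p,p)$, $G_{x,y}$ coincides on $(u',v')$ with the continuous extension $h$ of $f|_{L\cap[u',v']}$, since limits of $f|_L$ there are limits of $h$. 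Hence $G_{x,y}$ is continuous on $(p-\varepsilon_p,p)\cup(p,p+\varepsilon_p)$, whatever happens at $p$. If $p\in L$, your second bullet gives $G_{x,y}(s)\to f(p)=G_{x,y}(p)$ as $s\to p^{\pm}$. Since $\varepsilon_p$ does not depend on the line, this yields the last two conditions of (4); compare Lemmas~\ref{lem:Gab-continuous-on-line} and~\ref{lem:D-no-left-accumulation}.
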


\begin{proof}
(2) $\Rightarrow$ (1). Fix $x_{0}\in\mathbb{R}$. To prove bilateral-$\mathbb{Q}$-continuity at $x_{0}$, choose any
$\epsilon>0$. Whenever $x_{0}-\epsilon\le y<z\le x_{0}+\epsilon$ the function $G_{y,z}\in C(\mathbb{R})$ from (2) is a
witness on the interval $[y, z]$.

(1) $\Rightarrow$ (2). Fix $a<b$ and 
define
\begin{equation} 
G_{a,b}(t):=\lim_{z\rightarrow t,z\in L(a,b)}f(z). 
\label{eq:G_a,b}
\end{equation}
We have to prove several things: 
\begin{itemize} 
\item[(a). ] The definition is correct: the limit exists for every $t\in \R$. 
\item[(b). ] The function $G_{a,b}$ extends $f$ on $L(a,b)$. 
\item[(c). ] $G_{a,b}$ is continuous on $\R$.  
\end{itemize} 

Indeed: 
\begin{itemize} 
\item[(a). ] Let $t\in \R$ be arbitrary. By the bilateral $\Q$-continuity of $f$ on $\R$, there exists $\epsilon > 0$ such that, if we choose $t-\epsilon < x<t<y<t+\epsilon$, $x,y\in L(a,b)$ then there exists a continuous function $g_{x,y}:[x,y]\rightarrow \R$ such that $g=f$ on $L(a,b)\cap [x,y]$. 

We have: 
\begin{equation} 
\lim_{z\rightarrow t,z\in L(a,b)}f(z)=\lim_{z\rightarrow t,z\in L(a,b)}g_{x,y}(z)=g_{x,y}(t),
\label{eq:long} 
\end{equation} 
so the limit in~(\ref{eq:G_a,b}) is well defined. 

\item[(b). ] If $t\in L(a,b)$ then (by eq.~(\ref{eq:long})) $G_{a,b}(t)=g_{x,y}(t)=f(t)$ (since $g_{x,y}$ is equal to $f$ on $S(x,y)$, and continuous at $t$). 

\item[(c). ] Let $t\in \R$ be arbitrary. We have to prove that $G_{a,b}$ is continuous at $t$. Let $x,y$ be chosen as described in point (a). We prove that for every $w\in (x,y)$ 
\begin{equation} 
G_{a,b}(w)=g_{x,y}(w). 
\label{eq:ext} 
\end{equation} 
Proving equation~(\ref{eq:ext}) is enough to guarantee that $G_{a,b}$ is continuous at $t$, since $g_{x,y}$ is. 

Indeed, 
\begin{equation}
G_{a,b}(w)=\lim_{z\rightarrow w,z\in L(a,b)}f(z)= \lim_{z\rightarrow w,z\in L(a,b)}g_{x,y}(z)=g_{x,y}(w). 
\end{equation} 
The first equality is by~(\ref{eq:G_a,b}). The second follows since $g_{x,y}=f$ on $S(x,y)$. The third one follows from the continuity of $g_{x,y}$ at $w$. 

Since $t$ was chosen
arbitrarily, $G_{a,b}\in C(\mathbb{R})$.

\end{itemize}

(3) $\Rightarrow$ (4). Let $w$ be a point and $a<b$. Since $f$ is $\mathbb{Q}$-continuous at $w$, there
exists $\epsilon>0$ such that the function $f|_{L(a,b)\cap(w-\epsilon,w)}$ has a continuous extension
$g_{a,b}^{-}$ to $(w-\epsilon,w)$: consider the value $\epsilon^{\prime}$ given by the $\mathbb{Q}$-continuity of $f$ at $w$. Let
$0<\epsilon<\epsilon^{\prime}$ be so that $w-\epsilon\in L(a,b)$. Take an increasing sequence of points $w_{n}$ in $L(a,b)\cap(w-\epsilon,w)$ with $\lim_{n\rightarrow\infty}w_{n}=w$.
Consider $g_{n}^{-}$, the extension of $f$ on the interval $[w-\epsilon,w_{n}]$ guaranteed by $\mathbb{Q}$-continuity. Since $g_{n}^{-}$ is continuous, for all $u\in[w-\epsilon,w_{n}]$
\[ \lim_{\substack{z\in L(a,b)\cap[w-\epsilon,w_{n}]\\z\rightarrow u}}f(z)=\lim_{\substack{z\in L(a,b)\cap[w-\epsilon,w_{n}]\\z\rightarrow u}}g_{n}^{-}(z)=g_{n}^{-}(u). \]
Since the sequence $w_{n}$ is increasing, the last equation (in which the first term
does not depend on $n$) ensures that $g_{n+1}^{-}$ extends $g_{n}^{-}$ consistently on the common
domain $[w-\epsilon,w_{n}]$\footnote{We are using here the fact that two continuous functions that are equal on a dense rational grid are equal everywhere.}. So we can glue all functions $g_{n}^{-}$ to the function $g_{a,b}^{-}$ defined
on the whole interval $(w-\epsilon,w)$. Similarly we obtain a continuous extension $g_{a,b}^{+}$ on $(w,w+\epsilon^{\prime})$.

Let now $a<b$, let $L=L(a,b)$, and let $G_{a,b}$ be the function
defined by
\begin{equation} 
G_{a,b}(z)=
\begin{cases}
\alpha, &
\displaystyle
\lim_{\substack{x\to z\\x<z,\ x\in L}}f(x)
=
\lim_{\substack{x\to z\\x>z,\ x\in L}}f(x)
=\alpha,\\[1ex]
0, & \text{otherwise}.
\end{cases}
\label{G_a,b,2}
\end{equation} 


\begin{lemma}\label{lem:Gab-continuous-on-line}
If $w\in L$, then $G_{a,b}(w)=f(w)$ and $G_{a,b}$ is continuous
at $w$.
\end{lemma}

\begin{proof}
Fix $w\in L$. Since $f$ is $\mathbb Q$-continuous at $w$, there
exist $\varepsilon_-,\varepsilon_+>0$ such that the extension
condition in Definition~3.5 holds on every interval
$w-\varepsilon_-<y<z\leq w$ and on every interval
$w\leq y<z<w+\varepsilon_+.$
As in the preceding construction, these extensions can be glued to
give continuous functions $
g^-_{a,b}\colon (w-\varepsilon_-,w)\longrightarrow\mathbb R,
g^+_{a,b}\colon (w,w+\varepsilon_+)\longrightarrow\mathbb R,$
such that
\begin{equation} 
g^-_{a,b}(x)=f(x)
\quad
\bigl(x\in L\cap(w-\varepsilon_-,w)\bigr)
\end{equation} 
and
\begin{equation} 
g^+_{a,b}(x)=f(x)
\quad
\bigl(x\in L\cap(w,w+\varepsilon_+)\bigr).
\end{equation} 

We first observe that
\begin{equation}\label{eq:Gab-left-piece}
G_{a,b}(z)=g^-_{a,b}(z)
\qquad
(z\in(w-\varepsilon_-,w)).
\end{equation}
Indeed, fix such a $z$. Choose $\delta>0$ so small that $(z-\delta,z+\delta)\subset(w-\varepsilon_-,w).$
Since $g^-_{a,b}$ is continuous at $z$ and agrees with $f$ on
$L\cap(w-\varepsilon_-,w)$, we have
\begin{equation} 
\lim_{\substack{x\to z\\x<z,\ x\in L}} f(x)
 =
\lim_{\substack{x\to z\\x<z,\ x\in L}} g^-_{a,b}(x)
 =
g^-_{a,b}(z),
\end{equation} 
and likewise
\begin{equation} 
\lim_{\substack{x\to z\\x>z,\ x\in L}} f(x)
 =
\lim_{\substack{x\to z\\x>z,\ x\in L}} g^-_{a,b}(x)
 =
g^-_{a,b}(z).
\end{equation} 
Thus the first case in the definition of $G_{a,b}$ applies at $z$,
and~\eqref{eq:Gab-left-piece} follows.

Exactly the same argument gives
\begin{equation}\label{eq:Gab-right-piece}
G_{a,b}(z)=g^+_{a,b}(z)
\qquad
(z\in(w,w+\varepsilon_+)).
\end{equation}

It remains to determine the behavior of these two functions at $w$.
Choose $u\in L\cap(w-\varepsilon_-,w)$. 
By left $\mathbb Q$-continuity of $f$ at $w$, there exists a
continuous function $
h^-\colon [u,w]\longrightarrow\mathbb \R$
which agrees with $f$ on $S(u,w)$. Since $u,w\in L$, we have $S(u,w)=L\cap[u,w].$

Consequently, on the dense subset
$L\cap(u,w)\subset(u,w)$ 
the two continuous functions $h^-$ and $g^-_{a,b}$ both agree with
$f$. Hence
\begin{equation} 
h^-(x)=g^-_{a,b}(x)
\qquad (u<x<w).
\end{equation} 
It follows that
\begin{equation} 
\lim_{x\to w^-}g^-_{a,b}(x)
 =
\lim_{x\to w^-}h^-(x)
 =
h^-(w)
 =
f(w),
\end{equation} 
where the last equality holds because $w\in S(u,w)$.

Similarly, choosing $
v\in L\cap(w,w+\varepsilon_+)$ 
and using right $\mathbb Q$-continuity at $w$, we obtain
\begin{equation} 
\lim_{x\to w^+}g^+_{a,b}(x)=f(w).
\end{equation} 

Since $g^-_{a,b}=f$ on $L\cap(w-\varepsilon_-,w)$ and
$g^+_{a,b}=f$ on $L\cap(w,w+\varepsilon_+)$, the preceding two
equalities also give
\begin{equation} 
\lim_{\substack{x\to w\\x<w,\ x\in L}}f(x)
=
\lim_{\substack{x\to w\\x>w,\ x\in L}}f(x)
=
f(w).
\end{equation} 
Therefore the first case in the definition of $G_{a,b}$ applies at
$w$, and
\begin{equation} 
G_{a,b}(w)=f(w).
\end{equation} 

Finally, by~\eqref{eq:Gab-left-piece} and
\eqref{eq:Gab-right-piece},
\begin{equation} 
G_{a,b}(z)=
\begin{cases}
g^-_{a,b}(z), & w-\varepsilon_-<z<w,\\
f(w),         & z=w,\\
g^+_{a,b}(z), & w<z<w+\varepsilon_+.
\end{cases}
\end{equation} 
Since
\begin{equation} 
\lim_{z\to w^-}g^-_{a,b}(z)
=
f(w)
=
\lim_{z\to w^+}g^+_{a,b}(z),
\end{equation} 
we conclude that
\begin{equation} 
\lim_{z\to w}G_{a,b}(z)=G_{a,b}(w)=f(w).
\end{equation} 
Thus $G_{a,b}$ is continuous at $w$.
\end{proof}

Now we have to prove that the set $\mathcal{D}$,  defined as the set of points of discontinuities for some function $G_{x,y}(\cdot)$ constructed as described above, has no accumulation points in $\R$ (that is it is closed and discrete). This follows in the
following way: 

Assume that $w\in \R$. Since $f$ is $\Q$-continuous at $w$, there exist intervals $(w-\varepsilon,w)$ 
such that for every $w-\varepsilon < y<z\leq w$ there is a continuous function $g_{y,z}$ that coincides with $f$ on $S(y,z)$.  

The following result (and its upper analog) shows that 
$\mathcal{D}$ has no accumulation points: 

\begin{lemma}\label{lem:D-no-left-accumulation}
We have
\begin{equation} 
D\cap (w-\varepsilon,w)=\varnothing.
\end{equation} 
\end{lemma}

\begin{proof}
Suppose, toward a contradiction, that
$t\in D\cap(w-\varepsilon,w).$
Then $G_{a,b}$ is discontinuous at $t$ for some $a<b$. Since the rational line $L(a,b)$ is dense in $\mathbb R$, choose $
a_1,a_2\in L(a,b)\cap(w-\varepsilon,w)$ 
such that $a_1<t<a_2.$ 
By the choice of $\varepsilon$ and the $\mathbb Q$-continuity of
$f$ at $w$, there exists a continuous function
$h\colon [a_1,a_2]\longrightarrow\mathbb R$ 
which agrees with $f$ on $S(a_1,a_2)$.
Hence
\begin{equation} 
h(x)=f(x)
\qquad
\text{for every }x\in L(a,b)\cap[a_1,a_2].
\end{equation} 

We claim that $
G_{a,b}(u)=h(u)
\text{ for every }u\in(a_1,a_2).$
Indeed, fix such a point $u$. Since $h$ is continuous at $u$ and agrees with
$f$ on $L(a,b)\cap[a_1,a_2]$, we have
\begin{equation} 
\lim_{\substack{x\to u\\x<u,\ x\in L(a,b)}} f(x)
=
\lim_{\substack{x\to u\\x<u,\ x\in L(a,b)}} h(x)
=
h(u),
\end{equation} 
and similarly
\begin{equation} 
\lim_{\substack{x\to u\\x>u,\ x\in L(a,b)}} f(x)
=
\lim_{\substack{x\to u\\x>u,\ x\in L(a,b)}} h(x)
=
h(u).
\end{equation} 
Therefore the first case in the definition of $G_{a,b}$ applies at
$u$, and consequently $G_{a,b}(u)=h(u).$ Thus $
G_{a,b}|_{(a_1,a_2)}=h|_{(a_1,a_2)}.$
Since $h$ is continuous and $t\in(a_1,a_2)$, it follows that
$G_{a,b}$ is continuous at $t$, contradicting the choice of $t\in 
D\cap(w-\varepsilon,w)$. 
\end{proof}

(4) $\Rightarrow$ (3). Let $\mathcal{D}$ be the set of points $z\in \R$ where some function $G_{x,y}$ is discontinuous at $z$. 

\begin{lemma} 
If $w\notin\mathcal{D}$ then $f$ is bilaterally $\mathbb{Q}$-continuous (hence $\Q$-continuous) at $w$. 
\end{lemma}  
\begin{proof}
Choose an open interval around
$w$, say $(w-\varepsilon,w+\varepsilon)$, disjoint from $\mathcal{D}$. Let $w-\epsilon<y<z<w+\varepsilon$. The fact that $f$ can
be extended continuously on $[y, z]$ is witnessed by the function $G_{y,z}$ restricted
to the given interval. This function is continuous, since $[y,z]\cap\mathcal{D}$ is empty. 
\end{proof} 

If, on the other hand, $w\in\mathcal{D},$ then, since $\mathcal{D}$ is isolated, one can choose an
interval $(w-\epsilon,w+\epsilon)$ such that $\mathcal{D}\cap(w-\epsilon,w+\epsilon)=\{w\}$. The following result proves that $f$ is $\Q$-continuous at such a $w$: 

\begin{lemma} 
Function $f$ satisfies the conditions in Definition~\ref{def:qcont} for $\Q$-continuity from the left (right) at $w$ on intervals $(w-\varepsilon,w)$ and $(w,w+\varepsilon)$, respectively. 
\end{lemma}
\begin{proof} 
We only prove the $\Q$-continuity from the left; proving $\Q$-continuity from the right is completely analogous. 

Indeed, let
$w-\epsilon<y<z\leq w$. Each function $G_{y,z}$ is continuous on $[y, z]$, and 
extends $f$ on $S(y,z)$. 

The remaining case, $w-\epsilon<y<z=w$, is easy as well:  consider instead function $G_{y,w}$. Since $w\in L(y,w)$ and the discontinuity set of $G_{y,w}$ does not contain any point in that set, $G_{y,w}$ is continuous at $w$. Since $D\cap (w-\varepsilon,w)=\emptyset$, $G_{y,w}$ is continuous on $[y,w]$. Hence the function $G_{y,w}$ satisfies the conditions required from $\Q$-continuity from the left at $w$.  
\end{proof} 
\end{proof}

\begin{remark} 
There is a useful parallel with the notion of a \emph{linearly continuous function}:
a function on a real vector space is called linearly continuous when its restriction
to every affine real line is continuous; see Banakh and Maslyuchenko
\cite{45}.  Theorem~\ref{thm:qcont-char} may be regarded as a
rational-scalar counterpart of this notion: $\tau_{\mathbb Q}$-continuity is
equivalent to ordinary subspace continuity on every affine $\Q$-line.
(Bilateral) $\Q$-continuity impose the stronger requirements that each such
rational trace extends continuously to the corresponding real line.
\end{remark}

\section{Constructions and Separations}
\label{sec:5} 

We start with an auxiliary result that will be useful to construct functions by gluing them along rational lines: 

\begin{theorem}
\label{well-ordering-lines}

Assume the Axiom of Choice. The family of one-dimensional affine \(\Q\)-subspaces of \(\R\) (rational lines), which has cardinality $\mathfrak{c}$, admits a well-ordering \((L_\gamma)_{\gamma<\mathfrak{c}}\) such that
\begin{equation} 
 L_\gamma\cap (\bigcup_{\beta<\gamma}L_\beta)
\end{equation} 
is finite for every \(\gamma<\mathfrak{c}\).
\end{theorem}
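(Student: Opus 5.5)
The plan is a transfinite induction on the $\Q$-dimension of the ambient space. It rests on two elementary facts about rational lines $\ell=a+\Q h$, $h\neq 0$.

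\textbf{(i)} Two distinct rational lines meet in at most one point. Indeed, if $u\neq v$ both lie on $\ell$, then $\ell=L(u,v)$, so $\ell$ is determined by any two of its points.

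\textbf{(ii)} If $V\subseteq\R$ is a $\Q$-linear subspace and $\ell\not\subseteq V$, then $|\ell\cap V|\le 1$. Indeed, two points $u\neq v$ of $\ell\cap V$ would give $v-u\in V\cap(\Q h\setminus\{0\})$, hence $h\in V$, hence $\ell=u+\Q h\subseteq V$.

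I will prove the following stronger claim by induction on the infinite cardinal $\kappa$: for every $\Q$-vector space $W\subseteq\R$ of dimension $\le\kappa$, the rational lines contained in $W$ admit a well-ordering of order type at most $\kappa$ in which each line meets the union of its predecessors in a finite set. Applying this to $W=\R$, of dimension $\mathfrak c$ (using AC to get a Hamel basis), gives the theorem. The family of all lines has cardinality exactly $\mathfrak c$, since there are at most $\mathfrak c\cdot\mathfrak c$ lines and already $\mathfrak c$ many lines through $0$. Hence the order type is exactly $\mathfrak c$, and we may index the lines as $(L_\gamma)_{\gamma<\mathfrak c}$.

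\emph{Base case $\kappa=\aleph_0$.} Here $W$ is countable, so it contains countably many lines. Enumerate them in type $\le\omega$. Each line then has only finitely many predecessors, and by (i) each predecessor contributes at most one intersection point.

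\emph{Inductive step, $\kappa$ uncountable.} Fix a Hamel basis $\{b_\alpha\}_{\alpha<\kappa}$ of $W$ and put $V_\alpha=\Span\{b_\beta:\beta<\alpha\}$. This is an increasing continuous chain of subspaces with $\dim V_{\alpha+1}<\kappa$ and $\bigcup_\alpha V_\alpha=W$. Every line $a+\Q h\subseteq W$ lies in some $V_{\alpha+1}$, because $a$ and $h$ have finite supports. So the line-sets
\[
\Lambda_\alpha=\{\text{lines }\ell\subseteq V_{\alpha+1}:\ell\not\subseteq V_\alpha\}
\]
partition all lines in $W$. By the induction hypothesis, the lines of $V_{\alpha+1}$ have a good ordering of type $<\kappa$. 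Its restriction to $\Lambda_\alpha$ is still good, since a union over fewer predecessors is smaller.

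Now concatenate the $\Lambda_\alpha$ in the order of $\alpha$. For $\ell\in\Lambda_\alpha$, its predecessors are of two kinds:
\begin{itemize}
\item lines from earlier stages, all contained in $V_\alpha$; by (ii) their union meets $\ell$ in at most one point;
\item earlier lines of $\Lambda_\alpha$; by the inductive ordering their union meets $\ell$ in a finite set.
\end{itemize}
So the intersection with the union of all predecessors is finite.

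The step I expect to need the most care is the order-type bookkeeping, so that the indexing is by ordinals $<\mathfrak c$. The concatenation is an ordinal sum of $\kappa$ ordinals, each $<\kappa$. Every proper initial segment lies inside the first $\alpha+1$ blocks for some $\alpha<\kappa$, so it has cardinality $<\kappa$ because $\kappa$ is a cardinal. Hence the total order type is at most $\kappa$.

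The other point to verify carefully is (ii), together with the observation that no line is ``born'' at a limit stage. That holds because each line is determined by finitely many basis coordinates.
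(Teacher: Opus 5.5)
Your proposal is correct and follows essentially the same route as the paper. Both argue by induction on $\dim_{\Q}$ using the filtration $V_\alpha=\Span\{b_\xi:\xi<\alpha\}$, with lines grouped into layers by the first successor stage containing them; earlier layers lie in $V_\alpha$ and so meet a new line in at most one point, while same-layer predecessors are handled by the restricted inductive ordering. Your explicit check that the concatenated order type is at most $\kappa$, and hence exactly $\mathfrak{c}$ for $W=\R$, fills in a point the paper leaves implicit; it is cleanest to justify it by noting that the first $\alpha+1$ blocks consist of lines in $V_{\alpha+1}$, of which there are fewer than $\kappa$, rather than by summing the block lengths.
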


\begin{proof}
We prove the corresponding assertion for every \(\Q\)-vector space \(V\), by induction on \(\kappa=\dim_{\Q}V\).

If \(\kappa\le\aleph_0\), then \(V\) is countable, because \(\Q\) is countable and every vector has finite support. Hence the family of affine \(\Q\)-lines is countable. Enumerate it as \(L_0,L_1,\ldots\). Distinct affine lines meet in at most one point, so
\begin{equation} 
 \left|L_n\cap (\bigcup_{m<n}L_m)\right|\le n.
\end{equation} 

Now let \(\kappa>\aleph_0\), and assume the result for all smaller dimensions. Choose a basis \(B=\{b_\xi:\xi<\kappa\}\), where \(\kappa\) is identified with its initial ordinal, and put
\begin{equation} 
 V_\alpha=\Span\{b_\xi:\xi<\alpha\}\qquad(\alpha<\kappa).
\end{equation} 
Every affine line \(L\) is determined by two vectors of finite support. Consequently there is a unique \(\rho(L)<\kappa\) such that
\begin{equation} 
 L\subseteq V_{\rho(L)+1}
 \quad\text{and}\quad
 L\nsubseteq V_{\rho(L)}.
\end{equation} 
For \(\alpha<\kappa\), let
\begin{equation} 
 \mathcal L_\alpha=\{L:\rho(L)=\alpha\}.
\end{equation} 
Since \(\dim_{\Q}V_{\alpha+1}=|\alpha+1|<\kappa\), the induction hypothesis gives a good well-order of all affine lines in \(V_{\alpha+1}\). Restrict that order to \(\mathcal L_\alpha\).

Finally, order the whole family lexicographically: first by increasing \(\alpha\), then by the chosen order inside \(\mathcal L_\alpha\). Fix \(L\in\mathcal L_\alpha\). Every line in an earlier layer is contained in \(V_\alpha\). Moreover, $|L\cap V_\alpha|\le1$, 
because two points of \(L\cap V_\alpha\) would force the entire affine line \(L\) to lie in \(V_\alpha\), contrary to the definition of \(\rho(L)\). Thus all earlier layers contribute at most one point. Earlier lines in the same layer contribute only finitely many points by the restricted induction order. Their union therefore meets \(L\) in a finite set, as required.
\end{proof}

\begin{theorem}
Let $A\subseteq\mathbb{Q}$ be finite, and let $f:A\rightarrow\mathbb{R}$ be arbitrary. Then one can
construct an extension $g:\mathbb{Q}\rightarrow\mathbb{R}$ such that:
\begin{enumerate}
\item[(1)] $g$ is continuous on $\mathbb{Q}$ (with the subspace topology inherited from $\mathbb{R}$),
\item[(2)] $g|_A = f$,
\item[(3)] there is no function $G:\mathbb{R}\rightarrow\mathbb{R}$ with $G|_{\mathbb{Q}}=g$ that is continuous on some
nondegenerate interval.
\end{enumerate}
\label{thm:cons2} 
\end{theorem}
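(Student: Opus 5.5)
We will build $g$ as a sum $g=p+\psi$. Here $p$ is a polynomial interpolating $f$ on $A$, and $\psi:\Q\to\R$ is continuous on $\Q$, vanishes on $A$, and has no continuous extension to any nondegenerate interval. The key tool is the classical obstruction to continuous extension: a function $g$ on a dense set $D\subseteq\R$ extends to a function continuous on an open interval $I$ (agreeing with $g$ on $D\cap I$) only if, for every real $r\in I$, the limit $\lim_{z\to r,\,z\in D}g(z)$ exists. So it suffices to make $\psi$ blow up, or oscillate, at a dense set of irrational points.

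\textbf{Step 1 (the irrational targets).} Enumerate the rationals and choose a sequence of irrationals $(r_n)_{n\ge1}$ that is dense in $\R$; for instance $r_n=s_n+\sqrt2/n$, where $(s_n)$ enumerates $\Q$. We will then put
\[
\psi(q)=\prod_{a\in A}(q-a)\cdot\sum_{n\ge1}2^{-n}\sin\!\Bigl(\frac{1}{q-r_n}\Bigr),\qquad q\in\Q .
\]
Each summand is defined and continuous on $\R\setminus\{r_n\}\supseteq\Q$.

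\textbf{Step 2 (continuity on $\Q$ and interpolation).} The sum is uniformly convergent on $\R\setminus\{r_n:n\ge1\}$ by the Weierstrass $M$-test, and each term is continuous there. Hence the sum is continuous on that set, and in particular $\psi$ is continuous on $\Q$ in the subspace topology. The prefactor $\prod_{a\in A}(q-a)$ gives $\psi|_A=0$, so $g=p+\psi$ satisfies (1) and (2).

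\textbf{Step 3 (no continuous extension anywhere; the main obstacle).} Suppose $G:\R\to\R$ satisfies $G|_\Q=g$ and $G$ is continuous on some nondegenerate interval $I$. Then $G-p$ is continuous on $I$ and equals $\psi$ on $\Q\cap I$. Pick $r_m\in I$ with $r_m\notin A$, which is possible by density of $(r_n)$ since $A$ is finite. Then $\lim_{q\to r_m,\,q\in\Q}\psi(q)$ would exist.

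Write $\psi=\prod_{a\in A}(q-a)\cdot\bigl(2^{-m}\sin(1/(q-r_m))+R_m(q)\bigr)$, where $R_m$ is the sum of the remaining terms. The delicate point is that $R_m$ need not be continuous at $r_m$, since other $r_n$ accumulate there, so the oscillation of the $m$-th term could in principle be cancelled.

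The first fix is to choose the weights so that the $m$-th term dominates. We replace $2^{-n}$ by coefficients $c_n$ chosen inductively with $\sum_{n>m}c_n<c_m/4$ for all $m$; for example $c_n=5^{-n}$ gives $\sum_{n>m}c_n=c_m/4$, so take $c_n=6^{-n}$. Now let $q\to r_m$ through rationals. The $m$-th term takes values near $\pm c_m$, because along rationals $\sin(1/(q-r_m))$ still attains values arbitrarily close to $\pm1$ by density. The rest contributes at most $c_m/4$ in absolute value. So the oscillation of the bracket near $r_m$ is at least $2c_m-c_m/2>0$. Since $\prod_{a\in A}(r_m-a)\neq0$, $\psi$ has no limit at $r_m$ along $\Q$, which contradicts the continuity of $G-p$ at $r_m$. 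This proves (3).

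Finally, $p$ is the Lagrange interpolation polynomial of $f$ on $A$ (with $p\equiv 0$ if $A=\varnothing$).
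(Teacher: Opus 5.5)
Your construction is essentially the paper's: $g=g_0+P\cdot\sum_j 2^{-j}\sin(1/(q-\beta_j))$ over a dense sequence of irrationals, with $P=\prod_{a\in A}(x-a)$. You use a Lagrange polynomial where the paper uses piecewise-linear interpolation, which is immaterial. However, Step~3 contains a false step. You define $R_m$ as the sum of \emph{all} terms with $n\neq m$, and then assert that ``the rest contributes at most $c_m/4$ in absolute value.'' Your weight condition $\sum_{n>m}c_n<c_m/4$ only controls the tail $n>m$. The finitely many terms with $n<m$ have coefficients $c_n>c_m$ (e.g.\ $c_1=1/6$), so $|R_m|$ is not bounded by $c_m/4$. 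Hence the oscillation estimate $2c_m-c_m/2$ does not follow from what you wrote.

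The missing observation is that each of the finitely many terms with $n<m$ is continuous at $r_m$. Your $r_n$ are pairwise distinct, since $s_n-s_{n'}=\sqrt2(1/n'-1/n)$ forces $n=n'$. So these terms have zero oscillation at $r_m$, and adding this repairs your inequality. Once you see this, the ``delicate point'' disappears altogether: the accumulation of the $r_n$ at $r_m$ is harmless. Run the $M$-test argument of your own Step~2 on $\R\setminus\{r_n:n\neq m\}$, which contains $\Q\cup\{r_m\}$. It shows that $R_m$ is a uniform limit of partial sums continuous at $r_m$, hence is itself continuous at $r_m$, for any summable weights. Then the oscillation of the bracket at $r_m$ along $\Q$ is exactly $2c_m$, and no reweighting to $6^{-n}$ is needed. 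This is precisely the paper's lemma $\lim_{q\to\beta_j}R(q)=L$. The paper proves it by splitting into the finitely many terms $m\le N$, $m\neq j$ (each continuous at $\beta_j$) and a uniformly small tail $2\sum_{m>N}2^{-m}<\epsilon/2$.
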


\begin{proof}
Fix a countable dense set of irrationals. One explicit choice is as
follows: enumerate $\mathbb{Q}$ as $(r_k)_{k\ge 1}$, and set $\alpha_{k,m} = r_k + \frac{\sqrt{2}}{m} \quad (k,m\in\mathbb{N}, m\ge 1).$
Then each $\alpha_{k,m}\notin\mathbb{Q}$, and the set $D = \{\alpha_{k,m} : k,m\in\mathbb{N}, m\ge 1\} \subseteq \mathbb{R}\setminus\mathbb{Q}$
is dense in $\mathbb{R}$, because for any nonempty open interval $I$ one can pick a rational
$r_k\in I$ and then choose $m$ large enough so that $r_k+\sqrt{2}/m\in I$.

The following is a key observation: To prevent the existence of an interval on which some extension is continuous, it suffices to ensure that \empty{every} nondegenerate interval contains a point $x$
at which the rational limit of $g$ fails to exist. 

\begin{lemma} 
Let $g:\mathbb{Q}\rightarrow\mathbb{R}$ and let $F:\mathbb{R}\rightarrow\mathbb{R}$ satisfy $F|_{\mathbb{Q}}=g$. If $F$ is
continuous at some $x\in\mathbb{R}$, then
$ \lim_{\substack{q\rightarrow x\\ q\in\mathbb{Q}}} g(q)$
exists in $\mathbb{R}$ and equals $F(x)$. In particular, if the above rational limit does not exist
at $x$, then no extension $F$ can be continuous at $x$.
\label{lemma:extension}
\end{lemma}

\begin{proof}
Assume $F$ is continuous at $x$. Let $(q_n)\subseteq\mathbb{Q}$ with $q_n\rightarrow x$. Then $g(q_n)=F(q_n)\rightarrow F(x)$ by continuity of $F$ at $x$. Hence every rational sequence converging
to $x$ has image converging to the same limit $F(x)$, i.e. the rational limit exists
and equals $F(x)$.
\end{proof}

Now let $\emptyset \neq A = \{a_1 < \dots < a_n\} \subseteq \mathbb{Q}$ be finite and let $f:A\rightarrow\mathbb{R}$ be arbitrary. We construct the desired function $g$ in several steps.  
First, define $g_0:\mathbb{R}\rightarrow\mathbb{R}$ by
piecewise linear interpolation:
\begin{itemize}
\item $g_0(x) = f(a_1)$ for $x\le a_1$;
\item for each $i=1,\dots,n-1$ and $x\in[a_i, a_{i+1}]$,
\begin{equation} g_0(x) = f(a_i) + \frac{x-a_i}{a_{i+1}-a_i} (f(a_{i+1})-f(a_i)); 
\end{equation} 
\item $g_0(x) = f(a_n)$ for $x\ge a_n$.
\end{itemize}
By construction $g_0$ is continuous on $\mathbb{R}$ and satisfies $g_0(a_i) = f(a_i)$ for all $i$.


Next, enumerate the dense irrational set $D$ as $(\beta_{j})_{j\ge1}$ (any enumeration will do). For $j\ge1$ define
\begin{equation} 
\varphi_{j}(x)=\sin\left(\frac{1}{x-\beta_{j}}\right), \quad x\in\mathbb{R}\setminus\{\beta_{j}\}. \end{equation} 

Since $\beta_{j}\notin\mathbb{Q}$ the restriction $\varphi_{j}|_{\mathbb{Q}}$ is a well-defined continuous function on $\mathbb{Q}$ (it is the restriction of a continuous function on $\mathbb{R}\setminus\{\beta_{j}\}$, and no point of $\mathbb{Q}$ is removed). Define, for $q\in\mathbb{Q}$,
\begin{equation} 
H(q)=\sum_{j=1}^{\infty}2^{-j}\varphi_{j}(q). 
\end{equation} 
Since $|\varphi_{j}(q)|\le1$ for all $q\in\mathbb{Q}$, the Weierstrass M-test implies that the series converges uniformly on $\mathbb{Q}$, hence $H:\mathbb{Q}\rightarrow\mathbb{R}$ is continuous.

\vspace{0.2cm}

Now, let $P(x)=\prod_{a\in A}(x-a).$ $P$ is a polynomial with rational coefficients, so $P|_{\mathbb{Q}}$ is continuous on $\mathbb{Q}$ and $P(a)=0$ for all $a\in A$. 

Finally define $g:\mathbb{Q}\rightarrow\mathbb{R}$ by
\begin{equation} g(q)=g_{0}(q)+P(q)H(q), \quad q\in\mathbb{Q}. 
\label{9.4} 
\end{equation} 
We can complete equation~(\ref{9.4}) to the case $A=\emptyset$ by taking in this case
\begin{equation} 
g_0\equiv 0\mbox{ and }P\equiv 1. 
\end{equation} 

\begin{lemma}
\label{thm:5.5}
The function $g:\mathbb{Q}\rightarrow\mathbb{R}$ constructed above satisfies:
\begin{enumerate}
    \item[(1)] $g$ is continuous on $\mathbb{Q}$,
    \item[(2)] $g|_{A}=f$,
    \item[(3)] no extension $F:\mathbb{R}\rightarrow\mathbb{R}$ with $F|_{\mathbb{Q}}=g$ is continuous on any nondegenerate interval.
\end{enumerate}
\end{lemma}

\begin{proof}
(1) The restriction $g_{0}|_{\mathbb{Q}}$ is continuous on $\mathbb{Q}$ because $g_{0}$ is continuous on $\mathbb{R}$. Also, $P|_{\mathbb{Q}}$ is continuous on $\mathbb{Q}$ and $H$ is continuous on $\mathbb{Q}$ by uniform convergence. Therefore the function $q\mapsto P(q)H(q)$ is continuous on $\mathbb{Q}$, and so is $g$.

(2) If $a\in A$ then $P(a)=0$, hence $g(a)=f(a).$

(3) Let $I\subseteq\mathbb{R}$ be a nondegenerate open interval. Since $D$ is dense, pick $j$ such that $\beta_{j}\in I$. We claim that the rational limit $\lim_{q\rightarrow\beta_{j},q\in\mathbb{Q}}g(q)$ does not exist. Write $H(q)=2^{-j}\varphi_{j}(q)+\sum_{m\ne j}2^{-m}\varphi_{m}(q).$
Define
\begin{equation} 
R(q):=\sum_{m\ne j}2^{-m}\varphi_{m}(q) \quad (q\in\mathbb{Q}), \end{equation}
and note that for each $m\ne j$ the value $\varphi_{m}(\beta_{j})$ is well-defined because $\beta_{j}\ne\beta_{m}$. Set $L:=\sum_{m\ne j}2^{-m}\varphi_{m}(\beta_{j}),$ 
which converges absolutely since $|\varphi_{m}(\beta_{j})|\le1$.
\begin{lemma}  We have:
\begin{equation} 
\lim_{q\rightarrow\beta_{j}}R(q)=L.
\end{equation} 
\end{lemma} 
\begin{proof} 
Indeed, fix $\epsilon>0$ and choose $N\geq j$ so that $\sum_{m>N}2^{-m}<\epsilon/4$. By continuity of each $\varphi_{m}$ at $\beta_{j}$ for $m\in\{1,...,N\}\setminus\{j\}$, there exists $\delta>0$ such that whenever $q\in\mathbb{Q}$ and $|q-\beta_{j}|<\delta$:
\begin{equation} 
\sum_{\stackrel{1\le m\le N}{m\neq j}}2^{-m}|\varphi_{m}(q)-\varphi_{m}(\beta_{j})|<\epsilon/2. 
\end{equation} 
For the tail we use $|\varphi_{m}|\le1$ to get
\begin{equation} 
\sum_{m>N}2^{-m}|\varphi_{m}(q)-\varphi_{m}(\beta_{j})|\le2\sum_{m>N}2^{-m}<\epsilon/2.
\end{equation}  

Combining these inequalities yields $|R(q)-L|<\epsilon$ for all $q\in\mathbb{Q}$ with $|q-\beta_{j}|<\delta$, proving the lemma.
\end{proof} 

Crucially, $P(\beta_{j})\ne0$ because $P$ has only rational roots (namely the elements of $A$), whereas $\beta_{j}$ is irrational.

Now choose two rational sequences $(q_{n})$ and $(q_{n}^{\prime})$ with $q_{n}\rightarrow\beta_{j}$ and $q_{n}^{\prime}\rightarrow\beta_{j}$ such that
\begin{equation} 
\varphi_{j}(q_{n})=\sin\left(\frac{1}{q_{n}-\beta_{j}}\right)\rightarrow1, \mbox{ and }
\end{equation} 
\begin{equation} 
\varphi_{j}(q_{n}^{\prime})=\sin\left(\frac{1}{q_{n}^{\prime}-\beta_{j}}\right)\rightarrow-1. 
\end{equation} 
Such sequences exist because $\sin(1/t)$ oscillates between $-1$ and $1$ arbitrarily close to $t=0$, and rationals are dense.

We have 
\begin{equation} 
g(q_{n})=g_{0}(q_{n})+P(q_{n})\left(2^{-j}\varphi_{j}(q_{n})+\sum_{m\ne j}2^{-m}\varphi_{m}(q_{n})\right)
\end{equation} 
hence 
\begin{equation} 
\lim_{n\rightarrow \infty} g(q_n)= g_{0}(\beta_{j})+P(\beta_{j})(2^{-j}\cdot1+L)
\end{equation} 
Similarly, it holds that 
\begin{equation} 
\lim_{n\rightarrow \infty} g(q_{n}^{\prime})= g_{0}(\beta_{j})+P(\beta_{j})(2^{-j}\cdot(-1)+L). 
\end{equation} 
These two limits differ by $2\cdot2^{-j}P(\beta_{j})\ne0$. Hence the rational limit of $g$ at $\beta_{j}$ does not exist.

By Lemma~\ref{lemma:extension}, no extension $F$ of $g$ to $\mathbb{R}$ can be continuous at $\beta_{j}$. Since $\beta_{j}\in I$, it follows that no such $F$ can be continuous on the interval $I$.
\end{proof}
\end{proof} 

We can use Theorems~\ref{well-ordering-lines} and ~\ref{thm:cons2} to strongly separate $\tau_{\Q}$-continuity and 
$\mathbb Q$-continuity: 

\begin{theorem}\label{thm:tauQ-nowhere-Q}
Assume the Axiom of Choice. There exists a function
$f\colon\mathbb R\to\mathbb R$ which is $\tau_{\Q}$-continuous but is
$\mathbb Q$-continuous at no point.
\end{theorem}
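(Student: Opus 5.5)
Using the well-ordering of Theorem~\ref{well-ordering-lines} together with the extension result Theorem~\ref{thm:cons2}, I would build $f$ line by line by transfinite recursion. Let $(L_\gamma)_{\gamma<\mathfrak c}$ be the well-ordering of all rational lines given by Theorem~\ref{well-ordering-lines}, so that $F_\gamma:=L_\gamma\cap\bigcup_{\beta<\gamma}L_\beta$ is finite. Every real lies on some rational line, so $\R=\bigcup_\gamma L_\gamma$. At stage $\gamma$ the values of $f$ on $F_\gamma$ are already fixed, and we define $f$ on $L_\gamma\setminus F_\gamma$.

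To apply Theorem~\ref{thm:cons2}, fix for each $\gamma$ a parametrization $L_\gamma=p_\gamma+\Q d_\gamma$ with $d_\gamma>0$. The map $\phi_\gamma(q)=p_\gamma+q d_\gamma$ is an increasing homeomorphism $\Q\to L_\gamma$ (subspace topologies), and it extends to the real affine homeomorphism $\R\to\R$, $s\mapsto p_\gamma+sd_\gamma$. Pull back the finite set $A=\phi_\gamma^{-1}(F_\gamma)$ and the prescribed values $f\circ\phi_\gamma$ on it. Theorem~\ref{thm:cons2} then gives $g_\gamma:\Q\to\R$ that is continuous, agrees with those values on $A$, and has no extension to $\R$ continuous on any nondegenerate interval. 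Set $f:=g_\gamma\circ\phi_\gamma^{-1}$ on $L_\gamma\setminus F_\gamma$. By construction $f|_{L_\gamma}=g_\gamma\circ\phi_\gamma^{-1}$ on all of $L_\gamma$, and each point receives its value exactly once, at the first line containing it, so $f$ is well defined.

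\textbf{$\tau_{\Q}$-continuity.} For $x<y$, $L(x,y)$ is a rational line $L_\gamma$, and $f|_{L_\gamma}$ is continuous because $g_\gamma$ and $\phi_\gamma^{-1}$ are. Theorem~\ref{thm:qcont-char} ((2)$\Rightarrow$(1)) then gives that $f$ is $\tau_{\Q}$-continuous.

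\textbf{Failure of $\Q$-continuity everywhere.} Suppose $f$ were $\Q$-continuous from the left at some $x$, with radius $\epsilon$. Take $y<z$ in $(x-\epsilon,x)$ and let $L_\gamma=L(y,z)$. There is a continuous $h:[y,z]\to\R$ agreeing with $f$ on $S(y,z)=L_\gamma\cap[y,z]$. Transport back: $s\mapsto h(p_\gamma+sd_\gamma)$ is continuous on the nondegenerate interval $\phi_\gamma^{-1}([y,z])$ and agrees with $g_\gamma$ on its rational points. Extend it arbitrarily on the rest of $\R$, using $g_\gamma$ on the remaining rationals. This gives an extension $G$ of $g_\gamma$ to $\R$ that is continuous on the open interior of that interval, since continuity on an open interval depends only on values there. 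That contradicts property (3) of Theorem~\ref{thm:cons2}. The right-sided case is symmetric, so $f$ is $\Q$-continuous from neither side at any point.

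\textbf{Main obstacle.} The delicate part is the bookkeeping: checking that the reparametrization preserves both the continuity of $g_\gamma$ on $\Q$ and the ``no continuous extension on any interval'' property, and that values fixed earlier on $F_\gamma$ are respected. Finiteness of $F_\gamma$ is exactly what Theorem~\ref{thm:cons2} needs, since it handles only finite prescribed sets, so I expect the rest to be routine.
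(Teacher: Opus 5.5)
Your proposal is correct and follows essentially the same route as the paper. Both arguments run a transfinite recursion along the well-ordered rational lines of Theorem~\ref{well-ordering-lines}, apply Theorem~\ref{thm:cons2} to the finite overlap $F_\gamma$, obtain $\tau_{\Q}$-continuity from Theorem~\ref{thm:qcont-char}, and refute $\Q$-continuity by transporting a hypothetical continuous extension on $[y,z]$ back to $\Q$, contradicting property (3). The only cosmetic difference is that you refute left-sided rather than right-sided $\Q$-continuity and normalize $d_\gamma>0$.
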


\begin{proof}
Let $(L_\alpha)_{\alpha<\mathfrak{c}}$ be the well-ordering of the rational
affine lines supplied by Theorem~9.1, so that
$F_\alpha:=
L_\alpha\cap\bigcup_{\beta<\alpha}L_\beta$ 
is finite for every $\alpha<\mathfrak{c}$.

For every $\alpha<\mathfrak{c}$, fix an affine parametrization $
\varphi_\alpha\colon\mathbb Q\longrightarrow L_\alpha, 
\varphi_\alpha(q)=a_\alpha+q d_\alpha $,
where $d_\alpha\neq0$.

We construct $f$ by transfinite recursion. Suppose that $f$ has already
been defined on $
\bigcup_{\beta<\alpha}L_\beta.$ 
Put $
A_\alpha:=\varphi_\alpha^{-1}(F_\alpha)\subseteq\mathbb Q.$ 
Since $F_\alpha$ is finite, so is $A_\alpha$. Define
$
u_\alpha\colon A_\alpha\to\mathbb R,$ 
$u_\alpha(q):=f(\varphi_\alpha(q)).$
By Theorem~\ref{thm:cons2}, there exists a continuous function $
g_\alpha\colon\mathbb Q\to\mathbb R$ 
such that $
g_\alpha|_{A_\alpha}=u_\alpha$ 
and such that no extension $
G\colon\mathbb R\to\mathbb R, G|_{\mathbb Q}=g_\alpha,$
is continuous on any nondegenerate interval.

We now define $f$ on the new points of $L_\alpha$ by
\begin{equation} 
f(\varphi_\alpha(q)):=g_\alpha(q)
\end{equation} 
whenever $
\varphi_\alpha(q)\notin
\bigcup_{\beta<\alpha}L_\beta.$ 
On $F_\alpha$ this agrees with the values already assigned, by the
choice of $g_\alpha$. Thus after completing the recursion, $f$ is
well defined on all of $\mathbb R$.

For every $\alpha$ we have
\begin{equation} 
f(\varphi_\alpha(q))=g_\alpha(q)
\qquad(q\in\mathbb Q).
\end{equation} 
Since $g_\alpha$ is continuous on $\mathbb Q$, it follows that
$f|_{L_\alpha}$ is continuous in the usual subspace topology.
Therefore, by Proposition~\ref{thm:qcont-char}, $f$ is $\tau_{\Q}$-continuous.

We show that $f$ is not $\mathbb Q$-continuous at any point.
Fix $x\in\mathbb R$ and $\varepsilon>0$. Choose
$x<y<z<x+\varepsilon$ 
and let $L_\alpha=L(y,z)$. We claim that
$f|_{S(y,z)}$ admits no continuous extension to $[y,z]$.

Indeed, write $y=\varphi_\alpha(r),
z=\varphi_\alpha(s)$ 
for distinct $r,s\in\mathbb Q$. After interchanging $r$ and $s$ if
necessary, assume $r<s$. Then
\begin{equation} 
S(y,z)=\{\varphi_\alpha(q):q\in\mathbb Q\cap[r,s]\}.
\end{equation} 

Suppose, toward a contradiction, that there were a continuous function
$h\colon[y,z]\to\mathbb R$
such that $h(t)=f(t), (t\in S(y,z)).$
Define
$H(t):=h(\varphi_\alpha(t)),
\qquad t\in[r,s].$ 
Then $H$ is continuous on $[r,s]$, and for every
$q\in\mathbb Q\cap[r,s]$, $
H(q)
=
h(\varphi_\alpha(q))
=
f(\varphi_\alpha(q))
=
g_\alpha(q)$. 

We can therefore define a function
$\widetilde H\colon\mathbb R\to\mathbb R$ by
\begin{equation} 
\widetilde H(t)=
\begin{cases}
H(t), & t\in[r,s],\\
g_\alpha(t), & t\in\mathbb Q\setminus[r,s],\\
0, & t\in(\mathbb R\setminus\mathbb Q)\setminus[r,s].
\end{cases}
\end{equation} 
This is well defined and satisfies
$\widetilde H|_{\mathbb Q}=g_\alpha.$
Moreover, $\widetilde H$ is continuous on the nondegenerate open
interval $(r,s)$, since there it coincides with $H$. This contradicts
the defining property of $g_\alpha$ supplied by Theorem~9.2.

Hence $f|_{S(y,z)}$ has no continuous extension to $[y,z]$. Since such
$y,z$ can be chosen inside every right neighborhood of $x$, $f$ is
not right-$\mathbb Q$-continuous at $x$.

As $x$ was arbitrary, $f$ is $\mathbb Q$-continuous at no point.
\end{proof}

A similarly strong separation holds between $\tau_{\AD}$-continuity and $\Q$-continuity: 

\begin{theorem}
\label{thm:5.10}
Assuming a Hamel basis exists, then there exist $\tau_{\AD}$-continuous functions that are $\mathbb{Q}$-continuous at no point. 
\end{theorem}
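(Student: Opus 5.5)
The plan is to build the counterexample as $f=g\circ H$: here $H$ is a discontinuous additive function with rational values, and $g:\Q\to\R$ is the function supplied by Theorem~\ref{thm:cons2}. Concretely, fix a Hamel basis $\B$ and $b_0\in\B$, and let $H=\pi_{\B,b_0}$, as in the proof of Theorem~\ref{prop:6.4}. Then $H$ is additive, $H(b_0)=1$ and $H(\R)=\Q$. Apply Theorem~\ref{thm:cons2} with $A=\emptyset$. This gives $g:\Q\to\R$ that is continuous on $\Q$ and such that no $G:\R\to\R$ with $G|_{\Q}=g$ is continuous on a nondegenerate interval. Put $f:=g\circ H$.

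\textbf{Step 1: $f$ is $\tau_{\AD}$-continuous.} Let $V\subseteq\R$ be open. Since $g$ is continuous on $\Q$, we can write $g^{-1}(V)=W\cap\Q$ for some Euclidean open $W$. Because $H(\R)\subseteq\Q$, we get $f^{-1}(V)=H^{-1}(W\cap\Q)=H^{-1}(W)$. This set is a subbasic $\tau_{\AD}$-open set.

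\textbf{Step 2: many rational segments with distinct endpoint values.} Fix $x$ and $\varepsilon>0$. We want $x<y<z<x+\varepsilon$ with $H(y)\neq H(z)$, and similarly on the left of $x$. Take any $y\in(x,x+\varepsilon/2)$ and set $z=y+b_0/N$ with $N$ large enough that $|b_0|/N<\varepsilon/2$ (replacing $b_0$ by $-b_0$ if needed so that $z>y$). Then $H(z)-H(y)=\pm 1/N\neq 0$. The left side is handled symmetrically.

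\textbf{Step 3: no continuous extension on such a segment.} This is the main step. Suppose $h:[y,z]\to\R$ is continuous and $h=f$ on $S(y,z)$. For $\alpha\in\Q\cap[0,1]$, $\Q$-linearity gives
\[
H\bigl(\alpha y+(1-\alpha)z\bigr)=\alpha H(y)+(1-\alpha)H(z).
\]
So the affine bijection $\psi(\alpha)=\alpha H(y)+(1-\alpha)H(z)$ maps $\Q\cap[0,1]$ onto $\Q\cap[r,s]$, where $r=\min(H(y),H(z))<s=\max(H(y),H(z))$. Its inverse $\psi^{-1}$ is also affine with rational coefficients. Define $\Phi(u):=h\bigl(\psi^{-1}(u)\,y+(1-\psi^{-1}(u))\,z\bigr)$ for $u\in[r,s]$. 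This is continuous on $[r,s]$ and equals $g(u)$ for every rational $u\in[r,s]$. Now extend $\Phi$ to all of $\R$ exactly as $\widetilde H$ is built in the proof of Theorem~\ref{thm:tauQ-nowhere-Q}: use $g$ on $\Q\setminus[r,s]$ and $0$ on the remaining irrationals. The result is an extension of $g$ that is continuous on $(r,s)$, contradicting property (3) of Theorem~\ref{thm:cons2}. Hence $f$ fails both left and right $\Q$-continuity at every $x$.

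The only delicate point is the bookkeeping in Step 3: the reparametrization $\psi$ must send rationals of $[0,1]$ exactly onto rationals of $[r,s]$, which holds because $H(y),H(z)\in\Q$.
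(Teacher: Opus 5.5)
Your proof is correct, but it takes a different route from the paper's.

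\textbf{The paper's route.} The paper uses the indicator $\mathbbm{1}_A$ of the $\tau_{\AD}$-clopen set $A=H^{-1}((-\infty,\alpha))$, where $\alpha\notin H(\R)$; this is the same witness as in Theorem~\ref{prop:6.4}. By density of the graph of the discontinuous additive $H$, it finds chords $[y,z]$ in every right neighbourhood with $H(y)<\alpha<H(z)$. The obstruction to a continuous extension is then a single jump, at the irrational parameter $t_0$ where the rational-affine trace of $H$ crosses the level $\alpha$.

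\textbf{Your route.} You instead pull back the wild function $g$ of Theorem~\ref{thm:cons2} along the rational-valued additive map $\pi_{\B,b_0}$. The identity $f^{-1}(V)=H^{-1}(W\cap\Q)=H^{-1}(W)$ converts continuity of $g$ on $\Q$ into $\tau_{\AD}$-continuity of $f$. The rational-coefficient reparametrization $\psi$ then transports the non-extendability of $g$ to every chord on which $H$ is nonconstant. As a result you only need $H(b_0/N)=1/N$ rather than density of the graph, and you obtain failure on every such chord, not only on those straddling a fixed level.

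\textbf{What each buys.} The paper's argument is more elementary and self-contained. It also produces a two-valued function that simultaneously lies outside $\U$. Your argument reuses the mechanism of Theorem~\ref{thm:tauQ-nowhere-Q} (the extension $\widetilde H$ built from $g$), so the two nowhere-$\Q$-continuity separations become instances of a single gadget. The price is dependence on the more elaborate construction of Theorem~\ref{thm:cons2}.
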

\begin{proof} Take $H$ to be a discontinuous, nonsurjective, additive function. For instance, define a nonzero function $H$ on a Hamel basis $\B$ so that  $H(\B)$ does not span $\R$ over $\Q$. 

If $\alpha\notin H(\mathbb{R})$ then the set $A:=H^{-1}((-\infty,\alpha))$ is $\tau_{H}$-clopen. Indeed, $A$ is $\tau_H$-open by definition. On the other hand $\R\setminus A=H^{-1}((\alpha,\infty))$, so $A$ is $\tau_H$-closed as well. 

Because of this, the indicator function $\mathbbm{1}_{A}$ is $\tau_{H}$-continuous: indeed, for every open set $V$, $\mathbbm{1}_{A}^{-1}(V)$ is either $\emptyset, A,\R\setminus A$, or $\R$, depending on whether $0,1\in V$ or not.

On the other hand we show that $\mathbbm{1}_A$ is not $\Q$-continuous at any point of $\mathbb{R}$.
Indeed, fix $x\in\mathbb{R}$ and $\varepsilon>0$. We shall prove that the
extension condition for $\Q$-continuity fails on an interval contained in
$(x,x+\varepsilon)$.

Since $H$ is discontinuous and additive, 
by a well-known result (see e.g. \cite{12}) 
its graph is dense in
$\mathbb{R}^2$. Hence there exist
$x<y<z<x+\varepsilon$ such that $H(y)<\alpha<H(z).$
For instance, by density of the graph of $H$, we can take $y\in (x,x+\varepsilon/3), z\in (x+2\varepsilon/3,x+\varepsilon)$.

For every $t\in\mathbb{Q}\cap[0,1]$, additivity and $\mathbb{Q}$-linearity of $H$ imply
\begin{equation} 
H((1-t)y+tz)=(1-t)H(y)+tH(z).
\end{equation} 

Let
\begin{equation} 
t_0=\frac{\alpha-H(y)}{H(z)-H(y)}.
\end{equation} 

Then $0<t_0<1$ and $(1-t_0)H(y)+t_0H(z)=\alpha.$
Moreover, $t_0\notin\mathbb{Q}$, since otherwise
$H((1-t_0)y+t_0z)=\alpha,$ contradicting the assumption that $\alpha\notin H(\mathbb{R})$.

Choose rational sequences $(r_n)$ and $(s_n)$ satisfying $r_n<t_0<s_n,
r_n\to t_0,
s_n\to t_0.$  Define $
u_n=(1-r_n)y+r_nz,
v_n=(1-s_n)y+s_nz$, and
$u_0=(1-t_0)y+t_0z.$

Then $u_n,v_n\in S(y,z)$ and
$u_n\to u_0,
v_n\to u_0.$ Since the affine function
$
t\mapsto (1-t)H(y)+tH(z)$ is strictly increasing, we obtain $H(u_n)<\alpha<H(v_n)$ for every $n$. Therefore $\mathbbm{1}_A(u_n)=1,
\mathbbm{1}_A(v_n)=0$ for all $n$.

Suppose that $\mathbbm{1}_A(\cdot) |_{S(y,z)}$ admitted a continuous extension $g:[y,z]\to\mathbb{R}$. Then $g(u_n)=1,
g(v_n)=0$ 
for every $n$. Since both sequences converge to $u_0$, continuity of $g$ at $u_0$ yields $
g(u_n)\to g(u_0)
\quad\text{and}\quad
g(v_n)\to g(u_0)$, which is impossible.

Hence $\mathbbm{1}_A|_{S(y,z)}$ has no continuous extension to $[y,z]$.

Since such an interval may be chosen inside every right neighborhood of $x$, the function $\mathbbm{1}_A$ is not right-$\Q$-continuous at $x$.

Because $x$ was arbitrary, $\mathbbm{1}_A$ is nowhere $\Q$-continuous.
\end{proof}

Next we compare bilateral $\mathbb{Q}$-continuity and  $\tau_{\AD}$-continuity:

\begin{theorem}
\label{thm:5.7}
Assume that a Hamel basis exists.  
There exists a bilaterally $\mathbb{Q}$-continuous function $f:\mathbb{R}\rightarrow\mathbb{R}$ that is not continuous at $0$ with respect to $\tau_{\AD}$. In particular, (bilateral) $\mathbb{Q}$-continuity does not imply $\tau_{\AD}$-continuity.
\end{theorem}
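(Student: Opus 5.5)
The plan is to take a \emph{generalized quadratic form} built from a Hamel basis. It is bilaterally $\Q$-continuous because on each rational line it is a real polynomial in the rational parameter. It fails $\tau_{\AD}$-continuity at $0$ because it depends on infinitely many Hamel coordinates, while a $\tau_{\AD}$-neighborhood of $0$ constrains only finitely many additive functionals.

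\textbf{Construction.} Fix a Hamel basis $\B$ and define
\[
f(x)=\sum_{b\in\B}\pi_{\B,b}(x)^2 .
\]
The sum is finite for every $x$, since $Supp_{\B}(x)$ is finite. Put $B(x,y)=\sum_{b\in\B}\pi_{\B,b}(x)\pi_{\B,b}(y)$. This is a symmetric biadditive (indeed bi-$\Q$-linear) function with $f(x)=B(x,x)$.

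\textbf{Step 1: bilateral $\Q$-continuity.} Fix $x<y$, put $h=y-x$, and parametrize $L(x,y)=\{x+qh:q\in\Q\}$. By $\Q$-bilinearity,
\[
f(x+qh)=B(x,x)+2qB(x,h)+q^2B(h,h)\qquad(q\in\Q).
\]
Define $G_{x,y}(t)=P\bigl((t-x)/h\bigr)$, where $P(s)=B(x,x)+2sB(x,h)+s^2B(h,h)$ is a real polynomial. Then $G_{x,y}\in C(\R)$ and $G_{x,y}=f$ on $L(x,y)$. By the implication (2)$\Rightarrow$(1) of Theorem~\ref{character-qcont}, $f$ is bilaterally $\Q$-continuous. (Alternatively, use the closure theorem from \cite{10}: each product $\pi_{\B,b}\pi_{\B,b}$ is bilaterally $\Q$-continuous. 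However, an infinite sum is not directly covered by that theorem, so the line-wise argument is the one I would use.)

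\textbf{Step 2: failure of $\tau_{\AD}$-continuity at $0$.} We have $f(0)=0$. Let $U=f^{-1}((-1,1))\ni 0$. I will show that $U$ contains no $\tau_{\AD}$-neighborhood of $0$. Any such neighborhood contains a basic set
\[
W=\bigcap_{j=1}^m H_j^{-1}((-\epsilon_j,\epsilon_j)),
\]
where the $H_j$ are additive. (Basic sets are finite intersections of subbasic sets $H^{-1}(V)$, and we may translate each $H$ so that the relevant interval is centered at $H(0)=0$.)

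Choose distinct $c_1,\dots,c_{m+1}\in\B$ and consider the real-linear map $T:\R^{m+1}\to\R^m$ given by
\[
T(v)=\Bigl(\sum_i v_iH_j(c_i)\Bigr)_{j\le m}.
\]
By dimension count its kernel is nontrivial, so pick $v\in\ker T$ with $\|v\|_2=2$. Approximate $v$ by a rational vector $q=(q_1,\dots,q_{m+1})$ so close that $|T(q)_j|<\epsilon_j$ for all $j$ and $\sum_i q_i^2>1$; both are possible by continuity. Set $w=\sum_i q_ic_i$. Then $H_j(w)=T(q)_j$ by $\Q$-linearity, so $w\in W$, yet $f(w)=\sum_i q_i^2\ge 1$, so $w\notin U$. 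Hence $f$ is not $\tau_{\AD}$-continuous at $0$.

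The main obstacle is Step 2: finding a point in an arbitrary $\tau_{\AD}$-neighborhood of $0$ on which $f$ is large. The linear-algebra trick (a real kernel vector of a finite system, approximated by rationals) is the key idea, and it mirrors the inductive construction in the proof of Lemma~\ref{lem:7.4}. The only other point needing care is that basic $\tau_{\AD}$-neighborhoods of $0$ can indeed be taken in the form $W$ above.
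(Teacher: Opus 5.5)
Your proof is correct and follows the same plan as the paper. The only difference is the witness: the paper uses $f(x)=\max\{|\pi_{\B,b}(x)|: b\in Supp_{\B}(x)\}$ (with $f(0)=0$) instead of your $\sum_{b}\pi_{\B,b}(x)^2$, so its line-wise witness is $t\mapsto\max_{b\in S}|a_b+tc_b|$ rather than a quadratic polynomial. It defeats $\tau_{\AD}$-continuity at $0$ by exactly your argument: take a real kernel vector of $(t_j)\mapsto\sum_j t_j\,(h_1(b_j),\dots,h_n(b_j))$ on $n+1$ basis elements, scale it, and approximate by rationals.

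Your choice has a minor bonus. Your $f$ is a generalized $2$-monomial, so its bilateral $\Q$-continuity also follows from Theorem~\ref{thm:polynomials-bilateral-Q}, and your line-wise witnesses are ordinary polynomials.
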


\begin{proof}
Fix a Hamel basis $\B$ of $\mathbb{R}$ over $\mathbb{Q}$. Every $x\in\mathbb{R}$ admits a unique finite expansion
\begin{equation} 
x=\sum_{j=1}^{m}q_{j}b_{j}, \quad q_{j}\in\mathbb{Q}, \ b_{j}\in \B \text{ distinct.} 
\label{ham:exp}
\end{equation} 
Define $f:\mathbb{R}\rightarrow\mathbb{R}$ by
\begin{equation}\label{*}
\begin{cases}
f(0):=0 \\
f(x):=\max\{|\pi_{\B,b}(x)|:b\in Supp_{\B}(x)\} \quad (x\ne0)
\end{cases}
\end{equation}

\begin{lemma}
\label{lem:5.8}
The function $f$ defined by (\ref{*}) is bilaterally $\mathbb{Q}$-continuous. 
\end{lemma}

\begin{proof}[Proof of Lemma \ref{lem:5.8}]
Fix $y\ne z$ and write $d:=y-z\ne0$. Then $L(y,z)=\{z+qd:q\in\mathbb{Q}\}$. Let $S:=Supp_{\B}(z)\cup Supp_{\B}(d)$, which is finite. For each $b\in S$ set $a_{b}:=\pi_{\B,b}(z)\in\mathbb{Q}$ and $c_{b}:=\pi_{B,b}(d)\in\mathbb{Q}$. For any $t\in\mathbb{Q}$ the Hamel coefficients of $z+td$ satisfy $\pi_{\B,b}(z+td)=a_{b}+tc_{b} \quad (b\in S)$, 
and $\pi_{\B,b}(z+td)=0$ for $b\notin S$. Therefore, for rational $q\in\mathbb{Q}$,
\begin{equation} 
f(z+qd)=\max_{b\in S}|a_{b}+qc_{b}|. \end{equation} 
Define $h_{y,z}:\mathbb{R}\rightarrow\mathbb{R}$ by
\begin{equation} 
h_{y,z}(t):=\max_{b\in S}|a_{b}+tc_{b}|. 
\end{equation} 

This is a maximum of finitely many continuous (indeed affine) functions, hence is continuous. Now define $g_{y,z}:\mathbb{R}\rightarrow\mathbb{R}$ by
\begin{equation} 
g_{y,z}(x):=h_{y,z}\left(\frac{x-z}{y-z}\right). 
\end{equation} 
Since $x\mapsto(x-z)/(y-z)$ is continuous and $h_{y,z}$ is continuous, $g_{y,z}$ is continuous. If $x\in L(y,z)$, then $x=z+q(y-z)=z+qd$ for some $q\in\mathbb{Q}$, hence
\begin{equation} 
g_{y,z}(x)=h_{y,z}(q)=f(z+qd)=f(x). 
\end{equation} 
Thus $g_{y,z}$ is a continuous extension of $f|_{L(y,z)}$, proving bilateral $\mathbb{Q}$-continuity.
\end{proof}

We next show that $f$ is not $\tau_{\AD}$-continuous at $0$. Indeed, let $U$ be an arbitrary basic $\tau_{\AD}$-neighborhood of $0$, so
\begin{equation} 
U=\bigcap_{i=1}^{n}h_{i}^{-1}((-\delta_{i},\delta_{i})) 
\end{equation} 
for some $h_{1},...,h_{n}\in\mathcal{H}$ and $\delta_{1},...,\delta_{n}>0$. Every neighborhood contains an open neighborhood of the particular symmetric form (If $n=0$, then $U=\mathbb{R}$ and we may pick any $b_{0}\in B$ and take $x:=2b_{0}$, so $x\in U$ and $f(x)=2>1$. Thus we may assume $n\ge1$.)

Define the $\mathbb{Q}$-linear map $H:\mathbb{R}\rightarrow\mathbb{R}^{n}$ by $H(x):=(h_{1}(x),...,h_{n}(x)). $
Choose $n+1$ distinct basis elements $b_{0},...,b_{n}\in B$ and set $v_{j}:=H(b_{j})\in\mathbb{R}^{n}$ for $0\le j\le n$. Since $v_{0},...,v_{n}$ are $n+1$ vectors in the $n$-dimensional real vector space $\mathbb{R}^{n}$ they are linearly dependent over $\mathbb{R}$. Thus there exist real numbers $t_{0},...,t_{n}$, not all zero, such that
\begin{equation}
\sum_{j=0}^{n}t_{j}v_{j}=0\in\mathbb{R}^{n}.
\end{equation}
Equivalently, for each $i\in\{1,...,n\}$,
\begin{equation}
\sum_{j=0}^{n}t_{j}h_{i}(b_{j})=0.
\label{zero:sum}
\end{equation}
Fix $M>0$ so large that $\max_{0\le j\le n}|Mt_{j}|>2$. For each $i\in\{1,...,n\}$ define
\begin{equation} 
A_{i}:=\sum_{j=0}^{n}|h_{i}(b_{j})|\in[0,\infty). \end{equation} 
If $A_{i}=0$ then $h_{i}(b_{j})=0$ for all $j$ and the $i$-th constraint will automatically be satisfied. Otherwise $A_{i}>0$. Set
\begin{equation}
\eta:=\min\left\{1,\min_{1\le i\le n, A_{i}>0}\frac{\delta_{i}}{2A_{i}}\right\}>0.
\label{def:eta} 
\end{equation}
Choose rationals $q_{0},...,q_{n}\in\mathbb{Q}$ such that
\begin{equation}
|q_{j}-Mt_{j}|<\eta \quad (0\le j\le n).
\label{eta}
\end{equation}
(This is possible since $\mathbb{Q}$ is dense in $\mathbb{R}$.) Define
\begin{equation}
x:=\sum_{j=0}^{n}q_{j}b_{j}\in\mathbb{R}.
\label{ham:exp2}
\end{equation}

\begin{lemma} 
$x\in U.$ 
\end{lemma} 
\begin{proof} 
Indeed, fix $i\in\{1,...,n\}$. Using $\mathbb{Q}$-linearity of $h_{i}$ and~(\ref{zero:sum}), we compute
\[ h_{i}(x)=\sum_{j=0}^{n}q_{j}h_{i}(b_{j})=\sum_{j=0}^{n}(q_{j}-Mt_{j})h_{i}(b_{j})+M\sum_{j=0}^{n}t_{j}h_{i}(b_{j})=\sum_{j=0}^{n}(q_{j}-Mt_{j})h_{i}(b_{j}). \]
Hence, by the triangle inequality,
\begin{equation} 
|h_{i}(x)|\le\sum_{j=0}^{n}|q_{j}-Mt_{j}||h_{i}(b_{j})|<\eta\sum_{j=0}^{n}|h_{i}(b_{j})|=\eta A_{i}. \end{equation} 
If $A_{i}=0$ this gives $|h_{i}(x)|=0<\delta_{i}$. If $A_{i}>0$ then by~(\ref{def:eta}) we have $\eta A_{i}\le\delta_{i}/2<\delta_{i}$. Thus $|h_{i}(x)|<\delta_{i}$ for all $i$, so $x\in U$.
\end{proof} 

\begin{lemma} 
$f(x)>1$. 
\end{lemma} 
\begin{proof} 
In the Hamel expansion~(\ref{ham:exp2}) the coefficient of $b_{j}$ is exactly $q_{j}$. Therefore $f(x)=\max_{0\le j\le n}|q_{j}|$. Choose $j^{*}$ with $|Mt_{j^{*}}|=\max_{j}|Mt_{j}|>2$. Then by~(\ref{eta}) and $\eta\le1$,
\begin{equation} 
|q_{j^{*}}|\ge|Mt_{j^{*}}|-|q_{j^{*}}-Mt_{j^{*}}|>2-1=1. 
\end{equation} 
Hence $f(x)\ge|q_{j^{*}}|>1$.
\end{proof} 
Since $U$ was an arbitrary basic $\tau_{H}$-neighborhood of $0$, we have shown: for every basic neighborhood $U\ni0$ there exists $x\in U$ with $f(x)>1$. Because such basic neighborhoods form a neighborhood base at $0$, it follows that for every $\tau_{H}$-neighborhood $V\ni0$ there exists a basic $U$ with $0\in U\subseteq V$ and hence there exists $x\in V$ with $f(x)>1$. But $f(0)=0$ so $f$ cannot be $\tau_{H}$-continuous at $0$.
\end{proof}


The next result shows, on the other hand, that an arbitrary $\tau_{\Q}$-open set can be realized as preimages of a standard open set by bilaterally $\Q$-continuous functions:

\begin{theorem}
\label{thm:5.13}
Assume AC. 
Let $U\subseteq\mathbb{R}$ be $\tau_{\Q}$-open.
\begin{enumerate}
    \item[(1)] There exists a bilaterally $\mathbb{Q}$-continuous function $h:\mathbb{R}\rightarrow[0,\infty)$ such that $h^{-1}((0,\infty))=U$.
    \item[(2)] More generally, if $D\subsetneq\mathbb{R}$ is a nonempty open set in the usual topology, then there exists a bilaterally $\mathbb{Q}$-continuous function $g:\mathbb{R}\rightarrow\mathbb{R}$ such that $g^{-1}(D)=U$.
\end{enumerate}
\end{theorem}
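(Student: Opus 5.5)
The plan is to use the line-wise characterization of bilateral $\Q$-continuity from Theorem~\ref{character-qcont}: (2) $\Rightarrow$ (1). So it suffices to build $h$ so that, for every rational line $L$, the restriction $h|_L$ equals the restriction to $L$ of some $G_L\in C(\R)$. I will build $h$ by transfinite recursion along the well-ordering $(L_\gamma)_{\gamma<\mathfrak c}$ of Theorem~\ref{well-ordering-lines}, following the pattern of the proof of Theorem~\ref{thm:tauQ-nowhere-Q}. At stage $\gamma$ the set $F_\gamma:=L_\gamma\cap\bigcup_{\beta<\gamma}L_\beta$ is finite. I maintain the invariant that $h$ defined so far is $>0$ exactly on the points of $U$ and $0$ elsewhere, with values in $[0,\infty)$.

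\textbf{Key observation.} For every rational line $L$, the trace $U\cap L$ is relatively open in $L$ with its Euclidean subspace topology. Indeed, write $L=a+\Q d$ with $a\in U$. The definition of $\tau_\Q$ applied to the directions $d$ and $-d$ gives $\varepsilon>0$ with $a+qd\in U$ for all rational $q$ with $|q|<\varepsilon$ (the point $a$ itself lies in $U$). This is the same argument as (1) $\Rightarrow$ (2) of Theorem~\ref{thm:qcont-char}. Hence there is a Euclidean open set $W_\gamma\subseteq\R$ with $W_\gamma\cap L_\gamma=U\cap L_\gamma$. For example, take $W_\gamma$ to be the union of the open intervals $(a-\varepsilon_a|d|,a+\varepsilon_a|d|)$ over $a\in U\cap L_\gamma$.

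\textbf{Stage $\gamma$.} Put $\rho_\gamma(x):=\operatorname{dist}(x,\R\setminus W_\gamma)$; if $W_\gamma=\R$, use $\rho_\gamma\equiv 1$. This function is continuous and nonnegative, positive exactly on $W_\gamma$, so on $L_\gamma$ it is positive exactly on $U\cap L_\gamma$. The already assigned values on $F_\gamma$ are of two kinds. At points $p\in F_\gamma\setminus U$ the value is $0$ by the invariant, and $\rho_\gamma(p)=0$ as well. At points $p\in F_\gamma\cap U$ the value is some $c_p>0$, while $\rho_\gamma(p)>0$. Let $\varphi_\gamma$ be a strictly positive continuous (piecewise linear) function on $\R$ interpolating the finitely many values $c_p/\rho_\gamma(p)$ at $p\in F_\gamma\cap U$. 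Set $G_\gamma:=\varphi_\gamma\rho_\gamma\in C(\R)$; it agrees with the old values on $F_\gamma$. Define $h:=G_\gamma$ on $L_\gamma\setminus F_\gamma$. The invariant is preserved, and at the end $h|_{L_\gamma}=G_\gamma|_{L_\gamma}$ for every $\gamma$. Since the lines cover $\R$, $h$ is defined everywhere, $h\ge 0$, and $h^{-1}((0,\infty))=U$. Theorem~\ref{character-qcont} then gives bilateral $\Q$-continuity. The step needing the most care is the consistency on $F_\gamma$: the interpolation must respect both the zero and the positive prescribed values. This is exactly why I factor $G_\gamma$ as (positive interpolant)$\times$(distance function) rather than interpolating directly.

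\textbf{Part (2).} Since $D$ is nonempty, open and $D\neq\R$, it has a boundary point $b\notin D$ together with a one-sided interval inside $D$. That is, for some $\delta>0$ and sign $s\in\{\pm1\}$, every point $b+st$ with $0<t<\delta$ lies in $D$. Define $g:=b+s\min(h,\delta/2)$. Then $g(x)\in D$ iff $h(x)>0$ iff $x\in U$. Moreover $g$ is bilaterally $\Q$-continuous, because composing the continuous extensions $G_L$ with the continuous map $u\mapsto b+s\min(u,\delta/2)$ yields continuous extensions of $g|_L$.
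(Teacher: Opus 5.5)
Your proposal is correct and follows the paper's proof essentially step for step. Both run a transfinite recursion along the well-ordering of rational lines with finite overlaps, multiply the distance to the complement of an open set $W_\gamma$ cutting out $U\cap L_\gamma$ by a positive interpolant matching the finitely many earlier values, and conclude via Theorem~\ref{character-qcont}; part (2) likewise composes $h$ with a continuous map sending $0$ to a boundary point of $D$ and $(0,\infty)$ into $D$. The differences are cosmetic: the paper works in the parameter $q\in\mathbb{Q}$ rather than directly on $L_\gamma$, builds its positive factor as $e^{l_\alpha}$ with $l_\alpha$ interpolating logarithms, and uses a different but equivalent map in part (2).
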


\begin{proof}
We first prove (1). By Theorem~\ref{well-ordering-lines}, the family of rational affine lines in $\mathbb{R}$ (which has cardinality $\mathfrak{c}$) admits a  well-ordering indexed by ordinals $\alpha<\mathfrak{c}$,  as $(L_{\alpha})_{\alpha<\mathfrak{c}}$ so that for every $\alpha<\mathfrak{c}$, $F_{\alpha}:=L_{\alpha}\cap\bigcup_{\beta<\alpha}L_{\beta}$ is finite. We shall define, by transfinite recursion on $\alpha<\mathfrak{c}$, a partial function $h$ on $\mathbb{R}$ together with, for each $\alpha$, a continuous function $H_{\alpha}\in C(\mathbb{R})$ such that the following properties hold:
\begin{enumerate}
    \item[(i)] $H_{\alpha}|_{L_{\alpha}}=h|_{L_{\alpha}}$;
    \item[(ii)] for every $x\in L_{\alpha}$, $H_{\alpha}(x)>0\iff x\in U\cap L_{\alpha}$;
    \item[(iii)] if $x\in F_{\alpha}$, then the value assigned by $H_{\alpha}$ at $x$ coincides with the value of $h(x)$ already fixed at earlier stages.
\end{enumerate}
In addition, we maintain the following inductive invariant:
\begin{enumerate}
    \item[(iv)] whenever $x$ has already been assigned a value before stage $\alpha$, one has $h(x)>0\iff x\in U$.
\end{enumerate}

Assume inductively that for some $\alpha<\mathfrak{c}$ the function $h$ has already been defined on $\bigcup_{\beta<\alpha}L_{\beta}$, that for each $\beta<\alpha$ the witness $H_{\beta}$ has been constructed, and that properties (i)-(iv) hold at all earlier stages. We explain how to define $H_{\alpha}$ and extend $h$ to the new points of $L_{\alpha}$. Choose $a_{\alpha}\in\mathbb{R}$ and $d_{\alpha}\in\mathbb{R}\setminus\{0\}$ such that $L_{\alpha}=a_{\alpha}+\mathbb{Q}d_{\alpha}$. Thus the map $\varphi_{\alpha}:\mathbb{Q}\rightarrow L_{\alpha}$, $\varphi_{\alpha}(q)=a_{\alpha}+qd_{\alpha}$ is a bijection.

Since $U$ is $\tau_{\Q}$-open, 
$U\cap L_{\alpha}$ is open in the usual subspace topology on $L_{\alpha}$: this can be seen by applying Definition~\ref{def:tauq} in the two directions $d_{\alpha}$ and $-d_{\alpha}$. Map $\varphi_{\alpha}$ is affine, so a homeomorphism between  $\Q$ and $L_{\alpha}$. We infer that  
$O_{\alpha}:=\varphi_{\alpha}^{-1}(U\cap L_{\alpha})\subseteq\mathbb{Q}$ is open in $\mathbb{Q}$. 

Since every set open in $Q$ is of the form $W\cap \Q$ for some $W$ open in $\tau_{std}$, there exists an open set $W_{\alpha}\subseteq\mathbb{R}$ such that $W_{\alpha}\cap\mathbb{Q}=O_{\alpha}$. We now choose a continuous nonnegative function $k_{\alpha}:\mathbb{R}\rightarrow[0,\infty)$ whose positivity set is exactly $W_{\alpha}$. If $W_{\alpha}\ne\mathbb{R}$, we may take $k_{\alpha}(t):=\operatorname{dist}(t,\mathbb{R}\setminus W_{\alpha})$. If $W_{\alpha}=\mathbb{R}$ we simply put $k_{\alpha}\equiv1$. In either case, $k_{\alpha}\in C(\mathbb{R})$, $k_{\alpha}(t)\ge0$, and $k_{\alpha}(t)>0\iff t\in W_{\alpha}$.

Next consider the finite set $P_{\alpha}:=\varphi_{\alpha}^{-1}(F_{\alpha}\cap U)\subseteq\mathbb{Q}$. We claim that for every $t\in P_{\alpha}$, the number $h(\varphi_{\alpha}(t))$ is already defined and strictly positive. Indeed, if $t\in P_{\alpha}$ then $\varphi_{\alpha}(t)\in F_{\alpha}\cap U$ so $\varphi_{\alpha}(t)$ lies on some earlier line $L_{\beta}$ with $\beta<\alpha$; hence its value was assigned at an earlier stage. Since $\varphi_{\alpha}(t)\in U$ the inductive invariant (iv) gives $h(\varphi_{\alpha}(t))>0$. Moreover, because $t\in P_{\alpha}\subseteq O_{\alpha}=W_{\alpha}\cap\mathbb{Q}$, we have $t\in W_{\alpha}$ hence $k_{\alpha}(t)>0$. Therefore for every $t\in P_{\alpha}$ the number $s_{t}:=\log\left(\frac{h(\varphi_{\alpha}(t))}{k_{\alpha}(t)}\right)$ is well-defined.

By polynomial interpolation on the finite set $P_{\alpha}$ (including the case $P_{\alpha}=\emptyset$) we infer that there exists a continuous function $l_{\alpha}:\mathbb{R}\rightarrow\mathbb{R}$ such that $l_{\alpha}(t)=s_{t}$ $(t\in P_{\alpha})$. Define $m_{\alpha}(t):=k_{\alpha}(t)e^{l_{\alpha}(t)}$ $(t\in\mathbb{R})$. Then $m_{\alpha}\in C(\mathbb{R})$ and $m_{\alpha}(t)\ge0$ for all $t$. Furthermore, $m_{\alpha}(t)>0\iff k_{\alpha}(t)>0\iff t\in W_{\alpha}$. Also, for every $t\in P_{\alpha}$, $m_{\alpha}(t)=k_{\alpha}(t)e^{s_{t}}=h(\varphi_{\alpha}(t))$.

Now define
\begin{equation} 
H_{\alpha}(x):=m_{\alpha}\left(\frac{x-a_{\alpha}}{d_{\alpha}}\right) \quad (x\in\mathbb{R}). \end{equation} 
Since $m_{\alpha}$ is continuous, so is $H_{\alpha}$. We verify property (ii). Let $x\in L_{\alpha}$. Then $x=a_{\alpha}+qd_{\alpha}$ for a unique $q\in\mathbb{Q}$, namely $q=(x-a_{\alpha})/d_{\alpha}$. Therefore
\begin{align*}
 H_{\alpha}(x)>0\iff m_{\alpha}(q)>0\iff q\in W_{\alpha}\iff q\in W_{\alpha}\cap\mathbb{Q}\iff \\ 
 q\in O_{\alpha}\iff x\in U\cap L_{\alpha}. \end{align*} 
Thus $H_{\alpha}(x)>0\iff x\in U\cap L_{\alpha}$ $(x\in L_{\alpha})$.

We next verify compatibility on the overlap $F_{\alpha}$. If $x\in F_{\alpha}\cap U$ then $x=\varphi_{\alpha}(t)$ for some $t\in P_{\alpha}$, and by construction $H_{\alpha}(x)=m_{\alpha}(t)=h(\varphi_{\alpha}(t))=h(x)$. If $x\in F_{\alpha}\setminus U$, write again $x=\varphi_{\alpha}(q)$ with $q\in\mathbb{Q}$. Since $x\notin U\cap L_{\alpha}$, we have $q\notin O_{\alpha}$ hence $q\notin W_{\alpha}$. Therefore $m_{\alpha}(q)=0$ so $H_{\alpha}(x)=0$. On the other hand, $x$ was assigned earlier and $x\notin U$ so by the inductive invariant (iv) we also have $h(x)=0$. Thus $H_{\alpha}(x)=h(x)$ $(x\in F_{\alpha}\setminus U)$. Hence $H_{\alpha}(x)=h(x)$ $(x\in F_{\alpha})$ so property (iii) holds.

We now extend $h$ to the new points of $L_{\alpha}$ by putting $h(x):=H_{\alpha}(x)$ for $x \in L_{\alpha} \setminus (\bigcup_{\beta<\alpha} L_{\beta})$, and leaving all previously assigned values unchanged. This completes the recursive step.

After the recursion is finished, the function $h$ is defined on all of $\mathbb{R}$, because every point of $\mathbb{R}$ lies on at least one rational affine line. By construction, for each $\alpha$, $H_{\alpha}|_{L_{\alpha}}=h|_{L_{\alpha}}$ and $H_{\alpha}\in C(\mathbb{R})$. Since every rational affine line is some $L_{\alpha}$, Theorem~\ref{character-qcont} implies that $h$ is bilaterally $\mathbb{Q}$-continuous. It remains to prove that $h^{-1}((0,\infty))=U$.

Fix $x\in\mathbb{R}$ and let $\alpha$ be the least ordinal such that $x\in L_{\alpha}$. Then $x$ receives its value for the first time at stage $\alpha$, namely $h(x)=H_{\alpha}(x)$. If $x\in U$ then by property (ii), $H_{\alpha}(x)>0$ hence $h(x)>0$. Conversely, if $x\notin U$, then again by property (ii) we have $H_{\alpha}(x) \not> 0$; but $H_{\alpha}(x)\ge0$ so necessarily $H_{\alpha}(x)=0$ and this value is never altered at later stages. Therefore $h(x)>0\iff x\in U$. This proves (1).

We now prove (2). Let $D\subsetneq\mathbb{R}$ be nonempty and open in the usual topology. Choose a connected component $I$ of $D$. Since $D\ne\mathbb{R}$ the interval $I$ is proper, so it has a boundary point $c\in\mathbb{R}\setminus D$. Choose also a point $d_{0}\in I$. Define a continuous function $\psi:\mathbb{R}\rightarrow\mathbb{R}$ by
\[ \psi(t) = \begin{cases} c, & t\le0, \\ \frac{1}{1+t}c+\frac{t}{1+t}d_{0}, & t>0. \end{cases} \]
Then $\psi(t)=c\notin D$ for $t\le0$, while for $t>0$ the value $\psi(t)$ lies in the open interval between $c$ and $d_{0}$, hence in the component $I\subseteq D$. Therefore
\[ \psi^{-1}(D)=(0,\infty). \]
Let $h$ be the function obtained in part (1), and define $g:=\psi\circ h$. For each $\alpha$, the function $\psi\circ H_{\alpha}$ is continuous on $\mathbb{R}$, and on $L_{\alpha}$ one has
\begin{equation} 
(\psi\circ H_{\alpha})(x)=\psi(h(x))=g(x). \end{equation} 
Hence Theorem~\ref{character-qcont} shows that $g$ is bilaterally $\mathbb{Q}$-continuous. Finally,
\begin{equation} 
g^{-1}(D)=h^{-1}(\psi^{-1}(D))=h^{-1}((0,\infty))=U. \end{equation} 
This proves (2).
\end{proof}

\begin{corollary}
\label{cor:5.14}
Assume AC. 
Let $\tau_{\mathrm{init}}^{\mathrm{bi}\mathbb{Q}}$ denote the initial topology on $\mathbb{R}$ generated by all bilaterally $\mathbb{Q}$-continuous functions $g:\mathbb{R}\rightarrow\mathbb{R}$. Then
\begin{equation} 
\tau_{\mathrm{init}}^{\mathrm{bi}\mathbb{Q}}=\tau_{\Q}. \end{equation} 
\end{corollary}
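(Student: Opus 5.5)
The proof is a double inclusion, each direction following from results already established.

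For $\tau_{\mathrm{init}}^{\mathrm{bi}\mathbb{Q}}\subseteq\tau_{\Q}$, it suffices to check that every member of the subbasis $\{g^{-1}(V): g \text{ bilaterally } \Q\text{-continuous},\ V\in\tau_{std}\}$ is $\tau_{\Q}$-open, since $\tau_{\Q}$ is a topology (Definition~\ref{def:tauq}). By Theorem~\ref{thm:inclusions}(a) every bilaterally $\Q$-continuous $g$ is $\Q$-continuous. By Theorem~\ref{thm:inclusions}(b) it is then $\tau_{\Q}$-continuous, so $g^{-1}(V)$ is $\tau_{\Q}$-open. Alternatively, one can argue through the characterizations: Theorem~\ref{character-qcont} gives a continuous $G_{x,y}$ agreeing with $g$ on each $L(x,y)$. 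Hence $g|_{L(x,y)}$ is continuous in the subspace topology, and Theorem~\ref{thm:qcont-char} yields $\tau_{\Q}$-continuity. Either way, the initial topology, being the smallest topology making all such $g$ continuous, is contained in $\tau_{\Q}$. This direction needs no choice.

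For $\tau_{\Q}\subseteq\tau_{\mathrm{init}}^{\mathrm{bi}\mathbb{Q}}$, fix a $\tau_{\Q}$-open set $U$. Theorem~\ref{thm:5.13}(1), under AC, gives a bilaterally $\Q$-continuous $h:\R\to[0,\infty)$ with $h^{-1}((0,\infty))=U$. Since $(0,\infty)$ is Euclidean open, $U=h^{-1}((0,\infty))$ is a subbasic open set of $\tau_{\mathrm{init}}^{\mathrm{bi}\mathbb{Q}}$. This holds even in the degenerate cases: when $U=\emptyset$ the construction yields $h\equiv0$, and when $U=\R$ it yields $h>0$ everywhere. In any case, $\emptyset$ and $\R$ belong to every topology. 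Combining both inclusions gives equality.

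There is no real obstacle here: all of the work lies in Theorem~\ref{thm:5.13}, in particular the transfinite gluing along the well-ordered rational lines of Theorem~\ref{well-ordering-lines}. The only point to state carefully is that AC is used solely in the second inclusion, through Theorem~\ref{thm:5.13}.
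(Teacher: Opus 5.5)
Your proof is correct and follows the paper's argument: you get $\tau_{\mathrm{init}}^{\mathrm{bi}\mathbb{Q}}\subseteq\tau_{\Q}$ from Theorem~\ref{thm:inclusions}, and the reverse inclusion from the realization result, Theorem~\ref{thm:5.13}. The only difference is cosmetic: you use part (1) directly with the Euclidean open set $(0,\infty)$, whereas the paper cites the more general part (2), which this corollary does not need.
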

\begin{proof}
Every bilaterally $\mathbb{Q}$-continuous function is $\tau_{\Q}$-continuous by Theorem~\ref{thm:inclusions}, so $\tau_{\mathrm{init}}^{\mathrm{bi}\mathbb{Q}}\subseteq\tau_{\Q}.$ 
Conversely, by part (2) of the theorem, every $\tau_{\Q}$-open set is of the form $g^{-1}(D)$ for some bilaterally $\mathbb{Q}$-continuous $g$ and some usual open set $D$. Hence
$\tau_{\Q}\subseteq \tau_{\mathrm{init}}^{\mathrm{bi}\mathbb{Q}}.$ 
Therefore equality holds.
\end{proof}

\begin{corollary}
\label{cor:5.15}
Assume AC. 
Let $\tau_{\mathrm{init}}^{\mathbb{Q}}$ be the initial topology on $\mathbb{R}$ generated by all $\mathbb{Q}$-continuous functions $f:\mathbb{R}\rightarrow\mathbb{R}$. Then
\begin{equation} 
\tau_{\mathrm{init}}^{\mathbb{Q}}=\tau_{\Q}. \end{equation} 
Moreover, for every $\tau_{\Q}$-open set $U\subseteq\mathbb{R}$ and every nonempty proper Euclidean-open set $D\subsetneq\mathbb{R}$, there exists a $\mathbb{Q}$-continuous function $f:\mathbb{R}\rightarrow\mathbb{R}$ such that $f^{-1}(D)=U$.
\end{corollary}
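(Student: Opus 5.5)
The plan is a sandwich argument that reuses Corollary~\ref{cor:5.14} and Theorem~\ref{thm:5.13}.

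\textbf{Inclusion $\tau_{\mathrm{init}}^{\mathbb{Q}}\subseteq\tau_{\Q}$.} By Theorem~\ref{thm:inclusions}(b), every $\mathbb{Q}$-continuous $f$ is $\tau_{\Q}$-continuous. Hence $f^{-1}(V)$ is $\tau_{\Q}$-open for every Euclidean open $V$. So every member of the subbasis generating $\tau_{\mathrm{init}}^{\mathbb{Q}}$ lies in $\tau_{\Q}$. Since $\tau_{\Q}$ is a topology (Definition~\ref{def:tauq}), it contains the topology generated by that subbasis.

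\textbf{Inclusion $\tau_{\Q}\subseteq\tau_{\mathrm{init}}^{\mathbb{Q}}$.} By Theorem~\ref{thm:inclusions}(a), every bilaterally $\mathbb{Q}$-continuous function is $\mathbb{Q}$-continuous. The generating family for $\tau_{\mathrm{init}}^{\mathbb{Q}}$ therefore contains the generating family for $\tau_{\mathrm{init}}^{\mathrm{bi}\mathbb{Q}}$. Initial topologies are monotone in the generating family, so
\[
\tau_{\mathrm{init}}^{\mathrm{bi}\mathbb{Q}}\subseteq\tau_{\mathrm{init}}^{\mathbb{Q}}.
\]
Corollary~\ref{cor:5.14} identifies the left-hand side with $\tau_{\Q}$. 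The two inclusions together give $\tau_{\mathrm{init}}^{\mathbb{Q}}=\tau_{\Q}$.

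\textbf{The ``moreover'' part.} Given a $\tau_{\Q}$-open $U$ and a nonempty proper Euclidean open $D$, apply Theorem~\ref{thm:5.13}(2). It produces a bilaterally $\mathbb{Q}$-continuous $g$ with $g^{-1}(D)=U$. By Theorem~\ref{thm:inclusions}(a), this $g$ is $\mathbb{Q}$-continuous, so it serves as the required $f$.

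\textbf{Expected obstacle.} There is no real difficulty; the only care needed is bookkeeping. The hypothesis AC enters only through Corollary~\ref{cor:5.14} and Theorem~\ref{thm:5.13}, which rely on the well-ordering of rational lines from Theorem~\ref{well-ordering-lines}. For the edge cases $D=\emptyset$ and $D=\R$, which the statement excludes, one would otherwise only obtain $U=\emptyset$ or $U=\R$ via constant functions.
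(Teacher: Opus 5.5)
Your proposal is correct and matches the paper's proof essentially step for step. Both use the sandwich $\tau_{\Q}=\tau_{\mathrm{init}}^{\mathrm{bi}\mathbb{Q}}\subseteq\tau_{\mathrm{init}}^{\mathbb{Q}}\subseteq\tau_{\Q}$, obtained from Theorem~\ref{thm:inclusions} and Corollary~\ref{cor:5.14}. The ``moreover'' part likewise comes from regarding the bilaterally $\mathbb{Q}$-continuous function of Theorem~\ref{thm:5.13}(2) as a $\mathbb{Q}$-continuous one.
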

\begin{proof}
Since every bilaterally $\mathbb{Q}$-continuous function is $\mathbb{Q}$-continuous, we have
$ \tau_{\mathrm{init}}^{\mathrm{bi}\mathbb{Q}}\subseteq\tau_{\mathrm{init}}^{\mathbb{Q}}. $
On the other hand, every $\mathbb{Q}$-continuous function is $\tau_{\Q}$-continuous, so
$ \tau_{\mathrm{init}}^{\mathbb{Q}}\subseteq\tau_{\Q}. $
By the preceding corollary,
$ \tau_{\mathrm{init}}^{\mathrm{bi}\mathbb{Q}}=\tau_{\Q}.$
Therefore
$\tau_{\Q}=\tau_{\mathrm{init}}^{\mathrm{bi}\mathbb{Q}}\subseteq\tau_{\mathrm{init}}^{\mathbb{Q}}\subseteq\tau_{\Q},$
and hence
$\tau_{\mathrm{init}}^{\mathbb{Q}}=\tau_{\Q}. $
Finally, the realization theorem provides, for every $\tau_{\Q}$-open $U$ and every nonempty proper Euclidean-open $D$, a bilaterally $\mathbb{Q}$-continuous function $g$ such that $g^{-1}(D)=U$. Since $g$ is in particular $\mathbb{Q}$-continuous, the final assertion follows.
\end{proof}

Finally, we prove

\begin{theorem} The following are true: 
\begin{itemize} 
\item[(a). ] Assuming a Hamel basis exists, we have $C_{bi\Q}\subsetneq C_{\Q}$; that is there is a function that is $\Q$-continuous but not bilaterally $\Q$-continuous. 
\item[(b). ] $C(\tau_{\Q})\subsetneq LAC$, that is there exists a function that is locally almost continuous but not $\tau_{\Q}$-continuous. 
\end{itemize} 
\end{theorem}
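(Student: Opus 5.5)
\textbf{Part (a).} I would exhibit a $\Q$-continuous function that is not bilaterally $\Q$-continuous, with all the trouble concentrated at the single point $0$, and verify both properties through the rational-line characterization (Theorem~\ref{character-qcont}). Using the Hamel basis, fix a discontinuous additive $H$. Since $H$ is not linear, there are $a,d\neq 0$ with $H(a)/a\neq H(d)/d$. Define
\[
f(x)=\frac{H(x)^2}{x}\quad (x\neq 0),\qquad f(0)=0 .
\]

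\emph{Lines through $0$.} On a line $\Q d$ we have $f(qd)=q\,H(d)^2/d$ for $q\neq0$, and this formula also gives the value $0$ at $q=0$. So $G(x)=x\,H(d)^2/d^2$ is a continuous extension on all of $\R$.

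\emph{Lines not through $0$.} On a line $L=a+\Q d$ with $0\notin L$, we have
\[
f(a+qd)=\frac{\bigl(H(a)+qH(d)\bigr)^2}{a+qd}.
\]
The natural extension $G(x)=\bigl(H(a)+\tfrac{x-a}{d}H(d)\bigr)^2/x$ is continuous on $\R\setminus\{0\}$. Set $G(0)=0$ for definiteness. Its only possible discontinuity is at $0$, which is not a point of $L$.

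Hence the family $\{G_{x,y}\}$ has discontinuity set contained in $\{0\}$. This set is closed and discrete, so by (4)$\Rightarrow$(3) of Theorem~\ref{character-qcont}, $f$ is $\Q$-continuous.

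\emph{Failure of bilateral continuity.} Take the line $a+\Q d$ with $a,d$ as chosen above; we may take $a/d\notin\Q$, for example by replacing $a$ with a suitable element so that $a$ and $d$ are $\Q$-independent while keeping $H(a)/a\neq H(d)/d$. Then $0\notin L$, and the real parameter $t_0=-a/d$ gives $x=0$. The numerator there is
\[
H(a)-\tfrac{a}{d}H(d)=a\bigl(H(a)/a-H(d)/d\bigr)\neq 0 .
\]
So $f$ restricted to $L$ is unbounded near $0$, and no $G\in C(\R)$ can agree with $f$ on $L$. By (1)$\Rightarrow$(2) of Theorem~\ref{character-qcont}, $f$ is not bilaterally $\Q$-continuous.

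The main thing to check carefully is the existence of such $a,d$. If $H(a)/a=H(d)/d$ held for every $\Q$-independent pair, then $H(x)/x$ would be constant, and $H$ would be linear.

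\textbf{Part (b).} Here I would use $f=\mathbbm{1}_{\Q}$, which needs no choice.

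\emph{Local almost continuity.} For an open $V$, the preimage $f^{-1}(V)$ is one of $\emptyset$, $\Q$, $\R\setminus\Q$, or $\R$. Each of these sets is preopen, since both $\Q$ and $\R\setminus\Q$ are dense. Thus for $x\in f^{-1}(V)$ we get $x\in\operatorname{Int}(\overline{f^{-1}(V)})=\R$, so $f$ is locally almost continuous.

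\emph{Failure of $\tau_{\Q}$-continuity.} Consider the line $L(0,\sqrt2)=\Q\sqrt2$. It contains exactly one rational point, namely $0$. So $f|_L$ equals $1$ at $0$ and $0$ at every other point of $L$, and these points accumulate at $0$. Therefore $f|_L$ is discontinuous at $0$. By Theorem~\ref{thm:qcont-char}, $f$ is not $\tau_{\Q}$-continuous.

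Combined with the inclusion $C(\tau_{\Q})\subseteq LAC$ proved earlier, this gives strictness. This part is routine.
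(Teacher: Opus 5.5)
Your proof is correct. Part (b) follows the paper: you use the same Dirichlet function. The paper shows the failure of $\tau_{\Q}$-continuity directly from a rational point and an irrational direction; you read it off the line $\Q\sqrt2$ via Theorem~\ref{thm:qcont-char}, which is the same observation.

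Part (a) uses a genuinely different witness. The paper glues $0$ on $(-\infty,0]$ to an additive $K$ on $(0,\infty)$, normalized by $K(1)=0$ and $K(\sqrt2)=1$. It checks $\Q$-continuity directly: every short one-sided chord lies where $f$ is $0$ or additive, so it has an affine interpolant. It then rules out bilateral continuity by an explicit jump along the chord $[-r,r\sqrt2]$, which crosses $0$ at the irrational parameter $\sqrt2/(1+\sqrt2)$.

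Your $f(x)=H(x)^2/x$ is a single formula and works for any discontinuous additive $H$, with no normalization. The obstruction is a pole rather than a jump, and it is forced exactly by the nonconstancy of $H(x)/x$. The cost is that you rely on two directions of Theorem~\ref{character-qcont}, namely (4)$\Rightarrow$(3) and (1)$\Rightarrow$(2), whereas the paper's verification is self-contained. Your $\Q$-continuity check could also be done directly. Near $w\neq 0$, all short chords avoid $0$. A chord with an endpoint at $0$ lies on a line through $0$, where $f$ is linear.

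One small simplification: the step of ``replacing $a$'' to make $a,d$ independent is unnecessary. Since $H(x)/x$ is constant on $\Q d\setminus\{0\}$, the inequality $H(a)/a\neq H(d)/d$ already forces $a/d\notin\Q$, that is, $0\notin a+\Q d$.
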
 
\begin{proof} 
\begin{itemize} 
\item[(a). ] Choose an additive function $K$ with $K(1)=0$ and $K(\sqrt{2})=1$, and define 
\begin{equation} 
f(x)=\left\{\begin{array}{cc}
0 & x\leq 0, \\
K(x) & x>0. \\
\end{array} 
\right. 
\end{equation} 
The function $f$ is $\Q$-continuous: this is clear at points $x\neq 0$, and follows easily at $x=0$ as well, by considering one-sided neighborhoods: on intervals left of $x=0$ use the zero function; on intervals to the right use the additive function $K$.

However, $f$ is not bilaterally $\Q$-continuous at $x=0$: 
Assume, toward a contradiction, that it was, and let \(\varepsilon>0\) be a corresponding radius. Choose \(r\in\Q_{>0}\) with \(r\sqrt2<\varepsilon\). On the chord \([-r,r\sqrt2]\), write
\[
 x_\alpha=\alpha(-r)+(1-\alpha)r\sqrt2,
 \qquad \alpha\in\Q\cap[0,1].
\]
The unique zero of \(x_\alpha\) occurs at
\[
 \alpha_0=\frac{\sqrt2}{1+\sqrt2}\notin\Q.
\]
If \(\alpha<\alpha_0\), then \(x_\alpha>0\) and, by \(\Q\)-linearity of \(K\),
\[
 f(x_\alpha)=K(x_\alpha)
 =\alpha K(-r)+(1-\alpha)K(r\sqrt2)
 =(1-\alpha)r.
\]
If \(\alpha>\alpha_0\), then \(x_\alpha<0\) and \(f(x_\alpha)=0\). Rational values of \(\alpha\) approach \(\alpha_0\) from both sides, but the corresponding limits of \(f(x_\alpha)\) are \(r/(1+\sqrt2)\) and \(0\). Hence \(f|_{S(-r,r\sqrt2)}\) has no continuous extension to the chord, a contradiction.
 
\item[(b). ] The Dirichlet function $f=\mathbbm{1}_{\Q}$ is locally almost continuous: each of its level sets is dense in $\R$, so for every $x\in \R$ and every  neighborhood $V$ of $f(x)$, $f^{-1}(V)$ contains either $\Q$ or $\R\setminus \Q$. In either case 
the closure of $f^{-1}(V)$ is $\R$. 

However, $f$ is \emph{not} $\tau_{\Q}$-continuous: indeed consider $\alpha\in \Q$ and $h\not \in \Q$. For every $q\in \Q, q>0$, $\alpha+qh\not \in Q$, so $f(\alpha+qh)=0$, hence $f(\alpha + qh)$ does not converge (as $q\rightarrow 0,q\in \Q$) to $f(\alpha)=1$. 
\end{itemize} 
\end{proof} 

\begin{corollary} Assume AC. Then there is no topology \(\tau\) on \(\R\) such that \(C(\tau)=\CQ\), and no topology such that \(C(\tau)=\CbiQ\), where the codomain carries the Euclidean topology. 
\end{corollary}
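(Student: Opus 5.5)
The plan is to argue by contradiction and exploit the realization results of Corollaries~\ref{cor:5.14} and~\ref{cor:5.15}. Suppose $\tau$ is a topology on $\R$ with $C(\tau)=\CQ$ (the bilateral case is identical, with $\CbiQ$ in place of $\CQ$). The first step is to show that $\tau$ must coincide with $\tau_{\Q}$.

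For one inclusion, take any $\tau_{\Q}$-open set $U$. Fix a nonempty proper Euclidean open set $D$, for instance $D=(0,\infty)$. Corollary~\ref{cor:5.15} (respectively Theorem~\ref{thm:5.13}(2)) gives a $\Q$-continuous (respectively bilaterally $\Q$-continuous) function $g$ with $g^{-1}(D)=U$. Since $g\in C(\tau)$, the set $U$ is $\tau$-open. Hence $\tau_{\Q}\subseteq\tau$.

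For the reverse inclusion, we need to bound $\tau$ from above. Every $\tau$-continuous real function is in $\CQ\subseteq C(\tau_{\Q})$ by Theorem~\ref{thm:inclusions}. Equivalently, the initial topology $\tau'$ generated by $C(\tau)$ is contained in $\tau_{\Q}$. Note that $\tau'\subseteq\tau$, and it could be strictly smaller if $\tau$ is not completely regular. We therefore get $\tau_{\Q}\subseteq\tau'\subseteq\tau$. Since $C(\tau)=C(\tau')$ always holds for real-valued maps, we may replace $\tau$ by $\tau'$, and then $\tau'=\tau_{\Q}$ exactly. So the assumption gives $C(\tau_{\Q})=C(\tau')=\CQ$.

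The final step is to contradict this equality using Theorem~\ref{thm:tauQ-nowhere-Q}. That theorem, under AC, produces a function that is $\tau_{\Q}$-continuous but $\Q$-continuous at no point. So $C(\tau_{\Q})\neq\CQ$, and a fortiori $C(\tau_{\Q})\neq\CbiQ$, since $\CbiQ\subseteq\CQ$. This contradiction proves the corollary.

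The main obstacle is the subtlety that $C(\tau)$ only determines the completely regular reflection of $\tau$, not $\tau$ itself. Passing to $\tau'$ handles this, and the essential point is the two-sided sandwich $\tau_{\Q}\subseteq\tau'\subseteq\tau_{\Q}$. The lower bound comes from the realization theorem, which needs AC via the well-ordering of lines, and the upper bound comes from the inclusion $\CQ\subseteq C(\tau_{\Q})$. Once $\tau'=\tau_{\Q}$ is established, the separation theorem finishes the argument immediately.
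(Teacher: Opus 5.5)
Your proof is correct and follows the paper's argument: the realization theorem gives $\tau_{\Q}\subseteq\tau$, hence $C(\tau_{\Q})\subseteq C(\tau)$, which contradicts the existence (Theorem~\ref{thm:tauQ-nowhere-Q}) of a $\tau_{\Q}$-continuous function that is nowhere $\Q$-continuous. The sandwich $\tau'=\tau_{\Q}$ and the worry about completely regular reflections are unnecessary, because the one-sided inclusion $\tau_{\Q}\subseteq\tau$ already gives $C(\tau_{\Q})\subseteq C(\tau)=\CQ$. If you keep that detour, note that $\tau_{\Q}\subseteq\tau'$ follows directly from the realization argument (each $g^{-1}(D)$ is $\tau'$-open by the definition of $\tau'$), not from $\tau_{\Q}\subseteq\tau$.
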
 

\begin{proof} 
 If all functions in one of these classes were \(\tau\)-continuous, then its initial topology \(\tau_{\Q
}\) would be contained in \(\tau\). Hence \(C(\tau_{\Q})\subseteq C(\tau)\), contradicting the strict inclusions \(\CbiQ\subsetneq\CQ\subsetneq C(\tau_{\Q})\).

\end{proof}

\section{Examples of $\Q$-continuous functions} 

No "exotic" notion of continuity, however well-behaved, is valuable unless it is able to capture interesting classes of (ideally previously studied) real functions. For space reasons we give only one such example, that shows that bilateral $\Q$-continuity has indeed this desired property: 

\begin{definition} A $0$-\emph{monomial} is a constant function. Given integer $n\geq 1$, a $n$-\emph{monomial} \cite{1} (see also sections 13.4 and 15.9 in \cite{12}, as well as \cite{47}) is a function $f:\R\rightarrow \R$ satisfying the \emph{Fréchet’s equation}
\begin{equation} 
\frac{1}{n!}\Delta_{h}^{n} f(x)=f(h)
\end{equation} 
A \emph{polynomial} is a finite sum of monomials.
\label{def:poly}
\end{definition} 

With this definition, we have: 
\begin{theorem}\label{thm:polynomials-bilateral-Q}
Every polynomial $f\colon\mathbb R\to\mathbb R$ (in the sense of Definition~\ref{def:poly}) is bilaterally
$\mathbb Q$-continuous. More precisely, for every $y<z$ there exists
an ordinary polynomial $G_{y,z}\colon\mathbb R\to\mathbb R$ such that
\begin{equation} 
    G_{y,z}(t)=f(t)
    \qquad\text{for every }t\in L(y,z).
\end{equation} 
\end{theorem}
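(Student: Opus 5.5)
By the characterization of bilateral $\Q$-continuity proved earlier (Theorem~\ref{character-qcont}, (2)$\Rightarrow$(1)), it suffices to produce, for each line $L(y,z)$, a continuous $G_{y,z}$ on $\R$ that agrees with $f$ on $L(y,z)$. Since a finite sum of continuous functions is continuous, and sums of ordinary polynomials are ordinary polynomials, we can treat one $n$-monomial $f$ at a time. The case $n=0$ is trivial: take $G_{y,z}$ equal to the constant.

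\textbf{Step 1: restriction to a line.} Write $d=z-y$, so that $L(y,z)=\{y+qd:q\in\Q\}$, and put $\varphi(q)=f(y+qd)$ for $q\in\Q$. The aim is to show that $\varphi$ coincides on $\Q$ with an ordinary polynomial $P$ of degree at most $n$. We then set $G_{y,z}(t)=P\bigl((t-y)/d\bigr)$, which is an ordinary polynomial in $t$ and equals $f$ on $L(y,z)$.

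\textbf{Step 2: $\varphi$ is a polynomial of degree $\le n$ on $\Q$.} The main tool is the classical structure theorem for monomials, cited from \cite{12} (Section 15.9) and \cite{1}: an $n$-monomial has the form $f(x)=A(x,\dots,x)$ for a symmetric $n$-additive $A$. This is the step I expect to need the most care if one insists on deriving it directly from Fréchet's equation, so I would quote it from \cite{12}. Granting it, multi-additivity gives $\Q$-multilinearity, and expanding
\[
f(y+qd)=A(y+qd,\dots,y+qd)=\sum_{k=0}^{n}\binom{n}{k}q^{k}A(\underbrace{d,\dots,d}_{k},\underbrace{y,\dots,y}_{n-k})
\]
exhibits $\varphi(q)$ as an ordinary polynomial in $q\in\Q$ with real coefficients.

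\textbf{Alternative route avoiding the structure theorem.} From Fréchet's equation one shows that $\Delta_h^{n+1}f\equiv0$ for every $h$. Then $\Delta_{1/m}^{n+1}\varphi\equiv0$ on $\Q$ for every $m\ge1$. On each grid $\frac1m\mathbb Z$, this forces $\varphi$ to agree with a polynomial $P_m$ of degree $\le n$. Because the grids are nested, $P_{m}$ and $P_{mk}$ agree on the infinite set $\frac1m\mathbb Z$, hence are equal. So all the $P_m$ coincide with a single $P$, and $P=\varphi$ on $\bigcup_m\frac1m\mathbb Z=\Q$.

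Either route gives the desired ordinary polynomial $G_{y,z}$. Summing over the monomial components of a general polynomial completes the argument.
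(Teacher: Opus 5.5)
Your main route is the paper's own proof. The paper likewise invokes $f_k(x)=F_k(x,\dots,x)$ from \cite[Theorem~15.9.2]{12} and expands using $\Q$-multilinearity and symmetry. The only difference is cosmetic: it parametrizes the line as $qy+(1-q)z$, which yields the Bernstein-type expression $\sum_j\binom{k}{j}q^j(1-q)^{k-j}F_k(y,\dots,y,z,\dots,z)$ rather than your binomial expansion in $q$ based at $y$.

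Your alternative finite-difference route is also correct, and it is genuinely different. Each step holds:
\begin{itemize}
\item $\Delta_h^{n+1}f=\Delta_h^1(\Delta_h^n f)=n!f(h)-n!f(h)=0$ follows at once from Fr\'echet's equation.
\item The identity $\Delta_{1/m}^{n+1}\varphi(q)=\Delta_{d/m}^{n+1}f(y+qd)=0$ gives a recurrence whose extreme coefficients are $\pm1$. It therefore propagates the interpolating polynomial through $n+1$ consecutive grid points in both directions across $\frac1m\mathbb Z$.
\item Your nesting argument $P_m=P_{mm'}=P_{m'}$ is sound.
\end{itemize}

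This route buys two things. First, it avoids the representation theorem for monomials, which is the one nontrivial ingredient of the paper's argument. Second, it handles a general polynomial in one stroke. Every summand of degree at most $n$ (constants included) is annihilated by $\Delta_h^{n+1}$, so you do not need to decompose $f$ into monomials. It shows more generally that any $f$ with $\Delta_h^{n+1}f\equiv0$ for all $h$ agrees with an ordinary polynomial of degree at most $n$ on every rational line.

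What it gives up is the explicit description of the coefficients of $G_{y,z}$ in terms of the multi-additive forms $F_k$ evaluated at $y$ and $z$, which the paper's formula provides.
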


\begin{proof}
Write
\begin{equation} 
    f=f_0+f_1+\cdots+f_n,
\end{equation} 
where $f_0$ is a constant and, for $k\geq1$, $f_k$ is a
$k$-monomial. By the standard representation theorem for monomials
(see, e.g., \cite[Theorem~15.9.2]{12}), for every
$k=1,\ldots,n$ there exists a symmetric $k$-additive function $F_k\colon\mathbb R^k\longrightarrow\mathbb R$
such that
\begin{equation} 
    f_k(x)=F_k(x,\ldots,x)
    \qquad(x\in\mathbb R).
\end{equation} 

Fix $y<z$. For $\lambda\in\mathbb R$, define
\begin{equation} 
    P_{y,z}(\lambda)
    :=
    f_0+
    \sum_{k=1}^n
    \sum_{j=0}^k
    \binom{k}{j}
    \lambda^j(1-\lambda)^{k-j}
    F_k(
       \underbrace{y,\ldots,y}_{j},
       \underbrace{z,\ldots,z}_{k-j}
    ).
\end{equation} 
This is an ordinary polynomial in $\lambda$. Define
\begin{equation} 
    G_{y,z}(t)
    :=
    P_{y,z}\left(\frac{z-t}{z-y}\right),
    \qquad t\in\mathbb R.
\end{equation} 
Thus $G_{y,z}$ is an ordinary polynomial, and in particular it is
continuous. We claim that $G_{y,z}$ agrees with $f$ on $L(y,z)$. 

Indeed, let
$t\in L(y,z)$. Then $t=qy+(1-q)z$
for some $q\in\mathbb Q$, hence $q=\frac{z-t}{z-y}.$ 

Since an additive function is $\mathbb Q$-linear, every
$k$-additive function is $\mathbb Q$-linear in each variable.
Consequently, for $k=1,\ldots,n$,
\begin{align*}
    f_k(t)
    &=
    F_k(qy+(1-q)z,\ldots,qy+(1-q)z)\\
    &=
    \sum_{j=0}^k
    \binom{k}{j}
    q^j(1-q)^{k-j}
    F_k(
       \underbrace{y,\ldots,y}_{j},
       \underbrace{z,\ldots,z}_{k-j}
    ),
\end{align*}
where symmetry of $F_k$ was used to collect equal terms. Summing over
$k$ gives
\begin{equation} 
    f(t)=P_{y,z}(q)
        =P_{y,z}\left(\frac{z-t}{z-y}\right)
        =G_{y,z}(t).
\end{equation} 
Hence $
    G_{y,z}|_{L(y,z)}=f|_{L(y,z)}.$ Since such a continuous witness exists for \emph{every} $y<z$, $f$ is bilaterally $\mathbb Q$-continuous
on $\mathbb R$.
\end{proof}

\section{Context, Conclusions, Further Topics}
\label{sec:8}

As mentioned, the questions addressed in this paper 
are situated in an intellectual context with a substantially more 
ambitious foundational significance. They relate to problems about \emph{nonstandard models of classical mechanics}, specifically of the \emph{parallelogram rule for the composition of vectors (forces)}. Our brief presentation below cannot do justice to this topic. For a historical and epistemological discussion of the long-standing problem
of justifying the parallelogram rule, see also 
\cite{46}, Section 3.3 of \cite{31}, and \cite{32,33}. 

Briefly, it was long felt (at least since Daniel Bernoulli) that the composition rule for vectors needed an axiomatic justification. D'Alembert gave a first formulation \cite{27}. Darboux \cite{28} noted that a continuity axiom was implicit in D'Alembert's formulation. Other significant early work in this direction includes Hilbert \cite{37}, Schimmack \cite{29,30} and Hamel \cite{36}. In particular Schimmack studied an axiomatic system with seven axioms for vector addition (denoted $\oplus$), Axiom VII requiring, for every fixed vectors $v_1,v_2$, the continuity of the resultant of $\lambda \cdot v_1 \oplus \mu \cdot v_2$ as scalars $\lambda,\mu$ vary continuously. 

He proved that Axiom VII is independent from the rest of the axioms. The proof relies on constructing a model of the system of axioms I-VI  in which axiom VII fails to hold. His construction is based on Hamel's discontinuous additive functions \cite{35}. 

It is interesting to note that Hamel's work on discontinuous additive functions \cite{35} took place in the more general context of his preoccupations for the axiomatization of mechanics \cite{34}. Specifically, Hamel's motivation for studying discontinuous additive function was precisely the question whether the continuity axiom in the axiomatization of vector addition is genuinely necessary.

Recently, Laczkovich \cite{32,33} greatly completed the picture of nonstandard models of vector addition. In \cite{32} he considered an operation $\oplus$ on vectors in $\R^n$, constraining this operation by the following three axioms: 
\begin{itemize} 
\item[-] Axiom A1 (Commutativity and Associativity)
\begin{equation} 
a\oplus b = b\oplus a, (a\oplus b)\oplus c = a\oplus (b\oplus c)
\end{equation} 
\item[-] Axiom A2 (Ordinary addition for collinear vectors): 
\begin{equation} 
a\oplus b = a+b
\end{equation} 
whenever $b$ is a scalar multiple of $a$. 
\item[-] Axiom A3 (Rotational covariance): 
\begin{equation} 
Aa\oplus Ab = A(a\oplus b) 
\end{equation} 
for all vectors $a,b$ and rotations $A$. 
\end{itemize} 

The striking theorem from \cite{32} is that for $n\geq 3$, these axioms completely classify all possible operations: every one of them arises from an automorphism $H$ of the additive group $(\R,+)$ (i.e. a bijective additive function).

Specifically, given a bijective additive function $H:\R\rightarrow \R$, first define $\phi_{H}:\R^n\rightarrow \R^n$ as follows: if $a\neq 0$, define
\begin{equation} 
\phi_{H}(a)=H(|a|)\cdot \frac{a}{|a|}. 
\end{equation} 
($\phi_{H}(0)=0$). 
Then define 
\begin{equation} 
a\oplus_{H} b = \phi_{H}^{-1}( \phi_{H}(a)+ \phi_{H}(b))
\end{equation}  

In other words: $\oplus_{H}$ first distorts vectors $a,b$ by applying dilations by a factor of $H(|a|)$ ($H(|b|)$) to them, then takes ordinary vector addition of the distorted versions, and obtains the final result by subjecting the sum to the inverse distortion $\phi_{H}^{-1}$. 

Laczkovich proves \cite{32} that for every bijective additive function $H$, $\oplus_{H}$ satisfies axioms A1-A3 \textbf{and conversely:} for every operation $\oplus$ satisfying A1-A3 there exists a bijective additive function $H$ such that $\oplus = \oplus_{H}$. 

Ordinary (physical) vector addition corresponds to taking $H$ to be a linear function, $H(x)=cx$, $c\neq 0$. However, Laczkovich's result raises the intriguing possibility of defining "exotic" models of mechanics based on arbitrary bijective additive functions $H$. In such models vector addition $\oplus_H$ ceases to be continuous. 

\begin{shaded} 
In the context we described above, the question we asked in the introduction of the paper can be rephrased as follows: \textbf{is there an exotic (but somewhat well-behaved) "residual" notion of continuity that is shared even by these pathological models of vector addition?} 
\end{shaded}

Note that, since $\oplus_{H}:\R^3\times \R^3\rightarrow \R^3$, $\oplus_{H}(a,b)=\phi_{H}^{-1}(\phi_{H}(a)+\phi_{H}(b))$ is a bivariate function on $\R^3$, our theory of weak continuity does not,  strictly speaking, solve the problem above. 

However, one first approach to making a function like $\oplus_{H}$ well-behaved is to require that functions $\phi_{H},\phi_{H}^{-1}$ be well-behaved themselves. Since $\phi_{H}$ works by applying a dilation by a factor of $H(\cdot)$ on every ray from the origin $\{\lambda\cdot u: \lambda\in \R\}$ (which maps bijectively to $\R$), one condition to insist upon is that $H$ be well-behaved (i.e. "weakly continuous") on $\R$. To make all the models well-behaved, this needs to happen for \emph{every} bijective additive function $H$.\footnote{Restricting the  weak continuity requirement to \emph{bijective} additive functions is not really a constraint: assuming AC, every additive function is the sum of two bijective additive functions \cite{40}. We want a notion of weak continuity that is closed under summation. So we need to require that every additive function is "weakly continuous".} 

Our results single out $\Q$-continuity and its bilateral version as the only two "residual" notions of continuity of univariate functions (among those considered) that are somewhat well-behaved. But how should we choose among these two notions in applications, including those to nonstandard models of the paralellogram rule?  

Clearly, a principled answer to this question depends on the success of the intended (and so far not yet developed) applications to the two problems we discussed: 
\begin{itemize} 
\item[-] the development of a theory of differentiation based on arbitrary (rather than continuous) additive functions.
\item[-] the study of "pathological" models of vector addition seen as multivariate functions. 
\end{itemize} 

Hence giving a definitive answer to this question is beyond the scope of the present paper, and deferred to subsequent work. 

On the other hand, \emph{on mathematical grounds only}, bilateral $\Q$-continuity seems to be slightly more well-behaved than $\Q$-continuity: 
\begin{itemize} 
\item[-] First, it is the more restrictive notion, hence seems "closer to physical reality" (whatever that means).   
\item[-] Second, the chordwise characterization of bilateral $\Q$-continuity in Theorem~\ref{character-qcont} seems somewhat more elegant than that of ordinary $\Q$-continuity. 
\item[-] Finally, the example from Theorem~\ref{thm:polynomials-bilateral-Q} shows that the notion of bilateral $\Q$-continuity can capture some previously studied (interesting) classes of "exotic" functions. 
\end{itemize} 

We hope that we have motivated the study of weak/pathological extensions of continuity, and showed that at least some of the resulting notions are somewhat well-behaved. Here is a (short, nonexhaustive) list of problems we would like to see solved: 
\begin{itemize} 
\item[-] The extension of the $\Q$-continuity theory in this paper to multivariate real functions and its application to models of vector addition. 
\item[-] The (envisioned) applications to weak differentiability. 
\item[-] A more systematic study of the reverse mathematics of physical principles (see \cite{38,39}):  the "exotic" world of Cauchy-Hamel continuity seems to require specific nonconstructive axioms to exist. More generally, the question of understanding what constraints do specific set-theoretic axioms impose on physical theories is a fascinating topic. 
\item[-] Start with the Darboux-Schimmack axiomatization and replace Schimmack's ordinary continuity axiom VII by an appropriate (bilateral) $\Q$-continuity axiom. We may ask: \emph{What composition laws  $\oplus$  survive?} There are several plausible outcomes:
\begin{enumerate} 
\item (bilateral) $\Q$-continuity still forces $\oplus$ to be ordinary vector addition, once combined with the other axioms;
\item it permits exactly the Hamel-type nonstandard models identified by Laczkovich. 
\item it permits exactly the Hamel-type nonstandard models, at the cost of slightly altering the definition of bilateral $\Q$-continuity. 
\item it produces an intermediate class of models.
\end{enumerate} 
While we would \emph{a priori} favor the second alternative, we have no good reasons (besides mathematical taste) to suppose it would really be so. 
\end{itemize}

\section{Appendix}

In this section we present a result that, while interesting, was ultimately deemed not to be central to our main argument: during our investigation we had considered an additional test. However, this test  ultimately proved inconclusive. Still, the result formally proving this fact (Theorem~\ref{thm:8.1} below) seems to not have been stated in the literature and is of independent interest, so we chose to include it here.

\subsection{An Inconclusive Test: Sums of additive and Darboux functions}

By a classical result due to Sierpiński \cite{17}, every real function is the sum of two functions with the Darboux property. On the other hand not every function is a sum of a continuous and a Darboux function: \cite{6} have shown that $\mathcal{C}+\mathcal{D}\subseteq\mathcal{U}$. 

A natural candidate for a test we had considered was whether every function is the sum of a $\mathbb{Q}$-continuous and a Darboux function, and similarly for the other weak continuity notions we considered. 

The answer turned out to be affirmative, and actually holds in an even stronger form, which disqualified this question as a distinguishing test for the weak notions of continuity:

\begin{theorem}
\label{thm:8.1}
Assume AC. Then for every function $f:\mathbb{R}\rightarrow\mathbb{R}$ there exist an additive function $a:\mathbb{R}\rightarrow\mathbb{R}$ and a strongly Darboux function $d\in \mathcal{D}^{*}(\mathbb{R})$ such that
\begin{equation} 
f=a+d. \end{equation} 
\end{theorem}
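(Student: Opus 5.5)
We will build $a$ and $d=f-a$ by a transfinite recursion of length $\mathfrak{c}$ that forces every value on every interval. Say $d$ \emph{hits} $(I,y)$ if some $x\in I$ has $d(x)=y$. Since $d$ is strongly Darboux precisely when it hits every pair $(I,y)$ with $I$ a nondegenerate interval and $y\in\R$, it suffices to handle $I$ with rational endpoints. There are then exactly $\mathfrak{c}$ pairs $(I,y)$ to satisfy. Using AC, we enumerate them as $(I_\xi,y_\xi)_{\xi<\mathfrak{c}}$.

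At each stage $\xi$ we keep a $\Q$-linearly independent set $X_\xi=\{x_\eta:\eta<\xi\}$ and prescribe $a(x_\eta)=f(x_\eta)-y_\eta$. Because $X_\xi$ is independent, these values extend uniquely to a $\Q$-linear map on $\Span(X_\xi)$, and no consistency problem ever arises. At step $\xi$ we choose
\[
x_\xi\in I_\xi\setminus \Span(X_\xi).
\]
This is possible because $|\Span(X_\xi)|\le\max(|\xi|,\aleph_0)<\mathfrak{c}$ while $|I_\xi|=\mathfrak{c}$. (We use only that $\xi<\mathfrak{c}$ is a cardinal-smaller index; no CH is needed.) With $x_\xi$ chosen, we set $a(x_\xi):=f(x_\xi)-y_\xi$, and the set $X_{\xi+1}$ stays independent.

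After the recursion, $X=\{x_\xi:\xi<\mathfrak{c}\}$ is $\Q$-linearly independent, so AC lets us extend it to a Hamel basis $\B\supseteq X$. We define $a$ on $\B\setminus X$ arbitrarily, say $0$, and extend $\Q$-linearly to an additive $a:\R\to\R$. We then put $d:=f-a$. For each $\xi$ we have $x_\xi\in I_\xi$ and
\[
d(x_\xi)=f(x_\xi)-a(x_\xi)=y_\xi,
\]
so $d(I)=\R$ for every nondegenerate interval $I$, since $I$ contains some rational-endpoint interval $I_\xi$ for each target $y$. This gives $d\in\mathcal{D}^*(\R)$ and $f=a+d$.

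The only delicate point is the cardinality step: ensuring $I_\xi\not\subseteq\Span(X_\xi)$ at every stage. This relies on indexing by the initial ordinal $\mathfrak{c}$, so that each proper initial segment has cardinality $<\mathfrak{c}$, together with the fact that the span of fewer than $\mathfrak{c}$ reals has size $<\mathfrak{c}$. A secondary point is that the values of $a$ on $X$ are assigned freely. This is legitimate only because $X$ is independent, and that is exactly why we avoid $\Span(X_\xi)$ rather than just the set $X_\xi$ itself.
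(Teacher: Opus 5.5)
Your proof is correct, but it takes a different route from the paper.

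The paper uses no transfinite recursion. It fixes a Hamel basis $\mathcal{B}$ containing $1$ and uses $|\mathcal{B}|=\mathfrak{c}$ to pick pairwise distinct basis elements $e_{n,\alpha}\in\mathcal{B}\setminus\{1\}$, indexed by $\mathbb{N}\times\mathfrak{c}$. It then places the witnesses explicitly at $x_{n,\alpha}=r_n+q_{n,\alpha}e_{n,\alpha}\in J_n$, where $r_n$ is rational and $q_{n,\alpha}$ is a small nonzero rational. Setting $a(1)=0$ kills the rational anchor, so $a(x_{n,\alpha})=q_{n,\alpha}a(e_{n,\alpha})$ can be prescribed freely through the choice of $a(e_{n,\alpha})$.

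You instead build the witnesses by a diagonalization of length $\mathfrak{c}$. The bound $|\operatorname{span}_{\mathbb{Q}}(X_\xi)|<\mathfrak{c}$ keeps the family $\mathbb{Q}$-independent, and you extend to a basis only at the end. In your recursion neither $f$ nor $y_\xi$ affects the choice of $x_\xi$. The recursion only produces a $\mathbb{Q}$-independent family with $x_\xi\in I_\xi$, and all values of $a$ could be assigned afterwards. The paper's points $x_{n,\alpha}$ form exactly such an independent family, obtained in one stroke because each $J_n$ is open and contains a rational.

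Your approach buys generality. The same recursion works for any family of $\mathfrak{c}$ target sets, each of cardinality $\mathfrak{c}$; taking all perfect sets, for instance, gives a $d$ that takes every value on every perfect set. The paper's perturbation trick is tailored to open sets. The paper's approach buys concreteness: an explicit formula for $a$ on the basis, with no recursion and no cardinality estimate for spans.
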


\begin{proof}
We will first give an explicit construction.  

Indeed, fix $f:\mathbb{R}\rightarrow\mathbb{R}$. Let $(J_{n})_{n\in\mathbb{N}}$ enumerate all open intervals with rational endpoints. For each $n$ choose a rational $r_{n}\in J_{n}$. Fix a bijection $(y_{\alpha})_{\alpha<\mathfrak{c}}$ of $\mathbb{R}$.

Let $\mathcal{B}$ be a Hamel basis of $\mathbb{R}$ over $\mathbb{Q}$ and assume $1\in\mathcal{B}$. Since $|\mathcal{B}|=\mathfrak{c}=|\mathbb{N}\times\mathfrak{c}|$, we may choose pairwise distinct
$e_{n,\alpha}\in\mathcal{B}\setminus\{1\} \quad (n\in\mathbb{N},\alpha<\mathfrak{c}).$

For each $(n, \alpha)$ choose a nonzero rational $q_{n,\alpha}\in\mathbb{Q}\setminus\{0\}$ so small that
\begin{equation} 
x_{n,\alpha}:=r_{n}+q_{n,\alpha}e_{n,\alpha}\in J_{n}. 
\end{equation} 
This is possible because $J_{n}$ is open and $q\cdot e_{n,\alpha}\rightarrow0$ as $q\rightarrow0$ through rationals.

Define $a(1)=0$ (hence $a(q)=0$ for all $q\in\mathbb{Q}$). For each $(n, \alpha)$ define
\begin{equation} 
a(e_{n,\alpha}):=\frac{f(x_{n,\alpha})-y_{\alpha}}{q_{n,\alpha}} \end{equation} 
and define $a(b)=0$ for all remaining $b\in\mathcal{B}\setminus(\{1\}\cup\{e_{n,\alpha}\})$. Extend $a$ uniquely to an additive function $\mathbb{R}\rightarrow\mathbb{R}$ by $\mathbb{Q}$-linearity.

Now let $d:=f-a$. For each $(n, \alpha)$ we have
$a(x_{n,\alpha})=a(r_{n})+a(q_{n,\alpha}e_{n,\alpha})=0+q_{n,\alpha}a(e_{n,\alpha})=f(x_{n,\alpha})-y_{\alpha}$, 
so $d(x_{n,\alpha})=y_{\alpha}$. Thus for each fixed $n$, $d(J_{n})=\mathbb{R}. $

Since every interval $I$ contains an open interval $J_n$ with rational endpoints 
\begin{equation} 
d(I)\supseteq d(J_n)=\R. 
\end{equation} 
Hence $d\in \mathcal{D}^{*}(\mathbb{R})$. Finally, $f=a+d$.
\end{proof}

\begin{remark}
\label{rem:8.2}
Related constructions 
involving Hamel and Darboux functions appear in the note of Rădulescu-Rădulescu \cite{16}.
\end{remark}

\begin{remark}
\label{rem:8.4}
We can rederive Theorem~\ref{thm:8.1} from results of Płotka in \cite{15}: he introduces, for classes $F_{1}, F_{2}\subseteq\mathbb{R}^{\mathbb{R}}$ the cardinal
\begin{equation} Add(F_{1},F_{2}):=\min\{|F|:F\subseteq\mathbb{R}^{\mathbb{R}}\ \&\ \neg\exists g\in F_{1}(g+F\subseteq F_{2})\}, \end{equation} 
and proves (Prop. 1(3)) the equivalence
\begin{equation}
Add(F_{1},F_{2})\ge2\iff\mathbb{R}^{\mathbb{R}}=F_{2}-F_{1}.
\label{12:6}
\end{equation}
He then characterizes (Thm. 10(i)) 
$Add(\AD,\mathcal{D})$; in particular $Add(\AD,\mathcal{D})\ge2$. By ~(\ref{12:6}) with $F_{1}=\AD$ and $F_{2}=\mathcal{D}$ we get $\mathbb{R}^{\mathbb{R}}=\mathcal{D}-\AD=\mathcal{D}+\AD$, 
so every $f$ is a sum of an additive function and a Darboux function.

Moreover, Płotka also works with strongly Darboux functions. In the proof of Lemma 27 (\cite{15}), he gives a strengthening of the previous result  (see the explicit ``More precisely...'' sentence in that paper) that can be applied to infer that every real function is the sum of an additive and a strongly Darboux function, in other words Theorem \ref{thm:8.1}.
\end{remark}

\begin{remark}
\label{rem:8.5}
Since the $\mathbb{Q}$-continuous functions are closed under addition, we have $\AD+\mathcal{C}(\R) \subseteq C_\mathbb{Q}+C_\mathbb{Q}=C_\mathbb{Q}$. 
Since there are functions outside $C_\mathbb{Q}$ (e.g. the function from Theorem~\ref{thm:5.10}) not every function is the sum of an additive and a continuous function.
\end{remark}

\medskip



\end{document}